\newtheorem{thm}{Theorem}[section]
\newtheorem{cor}[thm]{Corollary}
\newtheorem{prop}[thm]{Proposition}
\newtheorem{lem}[thm]{Lemma}
\theoremstyle{definition}
\newtheorem{defn}[thm]{Definition}
\theoremstyle{remark}
\newtheorem{rem}[thm]{Remark}
\newtheorem{obs}[thm]{Observation}
\newcommand{\ra}{\rightarrow}
\newcommand{\R}{\mathbb{R}}
\newcommand*\interior[1]{\mathring{#1}}
\let\c@equation\c@thm
\numberwithin{equation}{section}
\title{On Keen Weakly Reducible Bridge Spheres}
\author{Puttipong Pongtanapaisan and Daniel Rodman}
\begin{document}

\begin{abstract}
A bridge sphere is said to be keen weakly reducible if it admits a unique pair of disjoint compressing disks on opposite sides. In particular, such a bridge sphere is weakly reducible, not perturbed, and not topologically minimal in the sense of David Bachman. In terms of Jennifer Schultens' width complex, a link in bridge position with respect to a keen weakly reducible bridge sphere is distance one away from a local minimum. In this paper, we give infinitely many examples of keen weakly reducible bridge spheres for links in \(b\) bridge position for $b \geq 4.$ 
\end{abstract}

\maketitle

\section{Introduction}
Suppose that we have a decomposition of the 3-sphere $S^3= V_+ \cup_{\Sigma} V_-$ where $V_+,V_{-}$ are 3-balls and $\Sigma$ is a 2-sphere. A link $L \subset S^3$ intersecting $\Sigma$ transversely is said to be in \textit{bridge position} with respect to $\Sigma$ if $L \cap V_+ = \alpha_+$ and $L \cap V_- = \alpha_-,$ where $\alpha_+,\alpha_-$ are $b$-strand trivial tangles. The punctured sphere $\Sigma_L = \Sigma \backslash L$ is called a $b$-\textit{bridge sphere}. To each bridge sphere, we can assign a disk complex $\mathcal{D}(\Sigma_L)$, which is a simplicial complex whose vertices are isotopy classes of compressing disks in \(S^3\backslash L\) for $\Sigma_L$ and whose $k$ simplices are spanned by $k+1$ vertices with pairwise disjoint representatives.

We say that $\Sigma_L$ is \textit{topologically minimal} if one of the following holds:
\begin{enumerate}
    \item $\mathcal{D}(\Sigma_L) = \emptyset$.
    \item There exists $i \in \mathbb{N} \cup \lbrace 0 \rbrace$ such that the $i$th homotopy group of $\mathcal{D}(\Sigma_L)$ is nontrivial.
\end{enumerate}
The \textit{topological index} of $\Sigma_L$ is defined to be 0 if $\mathcal{D}(\Sigma_L) = \emptyset$, or the smallest $i$ such
that $\pi_{i-1}(\mathcal{D}(\Sigma_L))$ is nontrivial if $\mathcal{D}(\Sigma_L) \neq \emptyset$. The notion of topological minimality was introduced by David Bachman \cite{bachman2010topological} as a generalization of useful concepts such as incompressibilty and strong irreducibility of surfaces in a 3-manifold. It turns out that topologically minimal surfaces possess desirable properties. For instance, in an irreducible 3-manifold, a topologically minimal surface can be isotoped to intersect an incompressible surface in such a way that any intersection loop is essential in both surfaces. Furthermore, the concept of topological minimality gave rise to 
examples of 3-manifolds containing arbitrarily many non-minimal genus, unstabilized Heegaard surfaces that are weakly reducible \cite{bachman2013stabilizing}. In \cite{moriah2007heegaard}, Moriah dubbed these examples "the nemesis of Heegaard splittings" as they are difficult to find.

Conjecturally, there is a special and mysterious relationship between topologically minimal surfaces and geometrically minimal surfaces, which are surfaces whose mean curvature is identically zero. 
Every geometrically minimal surface has a Morse index, which roughly speaking counts the maximal number of directions the surface can be deformed so as to decrease its area.
Freedman, Hass and Scott showed that every surface of topological index zero is isotopic to a geometrically minimal surface of Morse index zero \cite{freedman1983least}. 
By works of Pitts and Rubinstein \cite{pitts1987applications} and of Ketover, Liokumovich, and Song \cite{ketover2019existence}, a Heegaard surface of topological index one is isotopic to a geometrically minimal surface of Morse index at most one. In \cite{campisi2019disk}, Campisi and Torres showed that the genus two Heegaard surface of the 3-sphere has topological index three. 
By Urbano's work, this Heegaard surface must have Morse index at least six \cite{urbano1990minimal}. 
Thus, it is not true in general that a surface of topological index $k$ is isotopic to a surface of Morse index at most $k,$ but the precise connection is not well-understood.

One can ask the interesting question of which surfaces are topologically minimal. Several authors have given examples of topologically minimal Heegaard surfaces \cite{bachman2010existence,campisi2018hyperbolic,campisi2019disk,lee2015topologically} and bridge surfaces \cite{lee2016bridge,pongtanapaisan2020critical,rodman2018infinite}. Heegaard surfaces that are not topologically minimal have also been studied by several authors who constructed \textit{keen weakly reducible} Heegaard surfaces.
That is, each of these surfaces possesses a unique \textit{weak reducing pair}, a pair of compressing disks on opposite sides of the surface whose boundaries are disjoint. 
By a result of McCullough \cite{mccullough1991virtually}, the disk complex of the boundary of a handlebody is contractible.
Thus having a unique pair of weak reducing disks on distinct sides of a Heegaard splitting means that in the disk complex, there is a unique edge connecting the two contractible subcomplexes corresponding the two handlebodies, resulting in a contractible disk complex for the Heegaard surface. The examples of keen weakly reducible Heegaard surfaces in the literature with simple descriptions include the canonical Heegaard surface of a surface bundle whose monodromy has sufficiently high translation distance by Johnson \cite{johnson2012mapping}, some Heegaard surfaces arising from self-amalgamations by E  \cite{qiang2014topologically}, and certain unstabilized genus three Heegaard surfaces in irreducible and orientable 3-manifolds by Kim \cite{kim2016topologically}. More complicated constructions of keen weakly reducible Heegaard surfaces of genus $g \geq 3$ can also be found in \cite{liang2018reducible,qiang2017keen}. 

The goal of this paper is to provide infinitely many examples of non-topologically minimal bridge spheres, which are lacking in the literature, by verifying that the canonical bridge sphere for certain links in plat position is keen weakly reducible. Such links are obtained by ``amalgamating'' two types of links whose canonical bridge spheres are topologically minimal. Keen weakly reducible bridge spheres also belong to a family of surfaces with finitely many pairs of disjoint compressing disks \cite{zhang2022heegaard}, which is interesting in its own right.

\begin{thm}\label{theorem:main}
There exist infinitely many links with keen weakly reducible bridge spheres.
\end{thm}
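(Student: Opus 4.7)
The plan is to realize the examples as connected sums $L = L_1 \# L_2$, where $L_1$ and $L_2$ are drawn from families whose canonical bridge spheres are already known to be topologically minimal (for instance the infinite families in \cite{lee2016bridge,rodman2018infinite,pongtanapaisan2020critical}). The connected sum operation along a decomposing sphere $S$ meeting each summand in a single arc produces a $(b_1 + b_2 - 1)$-bridge sphere $\Sigma_L$ for $L$; by choosing $b_1, b_2 \geq 2$ one obtains examples in every bridge number $b \geq 3$. The decomposing sphere $S$ intersects $\Sigma_L$ in an essential once-punctured loop, and on each side of $\Sigma_L$ the ``swallow–follow'' construction yields a compressing disk from $S$. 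These two disks are disjoint and on opposite sides, so they form a weak reducing pair, witnessing that $\Sigma_L$ is weakly reducible. The bulk of the work is then to show that this is the \emph{only} weak reducing pair.

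The heart of the argument is a classification of compressing disks for $\Sigma_L$ in $S^3 \setminus L$. Given any such disk $D$, I would isotope $D$ to minimize $|D \cap S|$, then run the standard innermost-disk / outermost-arc machinery: closed intersection curves and arcs inessential in $\Sigma_L$ can be removed by surgery, and outermost arcs produce compressing or $\partial$-compressing disks for the bridge spheres $\Sigma_{L_i}$ of the summands. The topological minimality of each $\Sigma_{L_i}$ — which for the families cited above takes the form of strong irreducibility or an explicit description of $\mathcal{D}(\Sigma_{L_i})$ — sharply restricts what such surgered disks can look like, and the goal is to conclude that every compressing disk for $\Sigma_L$ is isotopic either to one of the two swallow–follow disks or to a compressing disk pulled back from a summand.

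With this classification in hand, uniqueness of the weak reducing pair is a short case check. Two disjoint compressing disks on opposite sides of $\Sigma_L$ cannot both come from a single summand side because $\Sigma_{L_i}$ admits no disjoint compressing disks on opposite sides (strong irreducibility or the analogous disk-complex hypothesis), and any mixed pair involving one summand disk and one swallow–follow disk is ruled out by the way the swallow–follow disk meets $S \cap \Sigma_L$. The only surviving possibility is the pair of swallow–follow disks from $S$, which is unique up to isotopy because $S$ is unique up to isotopy in a nontrivial connected sum. Infinitude then follows by letting $L_1$ range over any infinite family of prime links whose canonical bridge spheres are topologically minimal, and using prime decomposition of links to see that distinct $L_1$ produce distinct connected sums.

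The main obstacle will be the classification step, specifically ensuring that the outermost-arc surgeries produce compressing disks for $\Sigma_{L_i}$ \emph{in the link exterior} $S^3 \setminus L_i$ rather than merely in $S^3$, and that the resulting disks are genuinely essential. Controlling whether a surgered piece is a $\partial$-compressing disk (which would signal perturbation of the bridge sphere) versus a compressing disk is delicate, and this is exactly where the topological-minimality hypothesis on the summands must be used in full strength, via the known structure of their disk complexes. A secondary subtlety is confirming that the swallow–follow disks on the two sides of $\Sigma_L$ are themselves unique up to isotopy in $\mathcal{D}(\Sigma_L)$, which will require a separate innermost-disk argument using the essentiality of $S \cap \Sigma_L$.
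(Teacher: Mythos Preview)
Your uniqueness argument has a genuine gap. In your case analysis you rule out weak reducing pairs where both disks come from the same summand (using strong irreducibility of $\Sigma_{L_i}$) and pairs mixing a summand disk with a swallow--follow disk, and you conclude that only the swallow--follow pair survives. But you never address the configuration where the disk above comes from the $L_1$ side and the disk below comes from the $L_2$ side. Such pairs exist and are disjoint: if $D$ is the frontier of a bridge disk for an $L_1$-bridge arc above $\Sigma_L$ and $E$ is the frontier of a bridge disk for an $L_2$-bridge arc below, then $\partial D$ and $\partial E$ lie on opposite sides of the curve $S\cap\Sigma_L$ and hence form a weak reducing pair distinct from your swallow--follow pair. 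Since each summand with $b_i\geq 2$ contributes several such bridge-disk frontiers, a generic connected sum of strongly irreducible bridge positions has \emph{many} weak reducing pairs, and no innermost/outermost surgery along $S$ changes that. Your claim that a mixed summand/swallow--follow pair is ruled out is wrong for the same reason: the boundary of a swallow--follow disk is parallel to $S\cap\Sigma_L$ and is therefore disjoint from every summand-disk boundary on either side.

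The paper's proof is entirely different and does not proceed via a decomposing sphere at all. It builds a specific family $\mathcal{L}$ of plat links with tightly controlled twist parameters (most $|a_{i,j}|\geq 4$, with prescribed zero regions and sign conditions) and proves uniqueness by a train-track argument adapted from Johnson--Moriah \cite{johnson2016bridge}. On each level sphere $P_i$ one constructs a train track $\tau_i$; one shows that the boundary of every compressing disk above, other than one distinguished disk $B$, projects down to cover the full train graph on $P_1$, while $\partial B$ covers all but two eyelets; one shows that no compressing disk below $P_1$ has boundary almost carried by $\tau_1$; and a covering lemma then forces every loop disjoint from a covering loop to be almost carried. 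These facts together force every upper disk other than $B$ to meet every lower disk, and $B$ to meet every lower disk other than $B'$. The high twisting in the plat is exactly what makes the ``extra'' bridge-disk frontiers wind around enough, after projection, to intersect everything on the other side; this is what fails for an arbitrary connected sum and why the connected-sum heuristic cannot replace the train-track machinery.
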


This paper is organized as follows. In Section 2, we discuss properties of a keen weakly reducible bridge sphere related to perturbations of bridge spheres, thin position of links, and essential surfaces in the link exterior. In Section 3, we define the notion of a plat position for a link, consider a particular family of links in plat position, and describe useful positions of curves on a punctured sphere with respect to a train track. In Section 4, we characterize the behaviors of curves that bound disks above or below the bridge sphere. In Section 5, we use a criterion presented in \cite{cho2008homeomorphisms} to show that keen weakly reducible bridge spheres are not topologically minimal.
\subsection*{Acknowledgements}
The authors would like to thank Roman Aranda, David Bachman, Ryan Blair, Charlie Frohman, and Maggy Tomova for helpful conversations. We thank the referees for finding a mistake in an earlier draft. Research conducted for this paper is supported by the Pacific Institute for the Mathematical Sciences (PIMS). The research and findings may not reflect those of the Institute.

\section{Consequences of being keen weakly reducible}
In this section, we discuss some consequences of putting a link in bridge position with respect to a keen weakly reducible bridge sphere. We remark that a priori keen weakly reducible bridge spheres are not necessarily canonical bridge spheres for links in plat position. 

\subsection{Unperturbed Bridge Spheres}
Let $L$ be a link in bridge position with respect to $\Sigma$. Then $L \cap V_+ = \alpha_+$ is a collection of disjoint embedded arcs with the property that there exists an isotopy (rel $\partial\alpha^+$) taking $\alpha_+$ into $\Sigma.$ For each arc $\alpha_+^i$ of $\alpha_+,$ the trace of such an isotopy is a disk called a \textit{bridge disk} $D^i_+$.
From each bridge disk \(D^i_+\) we can obtain a compressing disk \(dD^i_+\) called the \textit{frontier} of \(D^i_+\) using the construction \(dD_+^i=\left(\partial \overline{N}(D_+^i)\right)\cap V_+\). Analogous definitions can be made for $L \cap V_- = \alpha_-.$

We say that a bridge sphere $\Sigma_K$ is \textit{perturbed} if there exist two bridge disks $D_+^1 \subset V_+$ and $D_-^1 \subset V_-$ such that
$D_+^1 \cap D_-^1$ is a single point contained in $L$. It is an interesting problem to search for unperturbed bridge spheres for a link up to isotopy since a perturbed bridge can always be obtained from a bridge sphere that is not perturbed by an isotopy which introduces a maximal point and a minimal point as shown in Figure \ref{fig:perturbing}. In some cases, the only destabilized bridge sphere is the one that realizes the bridge number \cite{otal1985presentations,ozawa2011nonminimal,zupan2011properties}.
Another common way to show that a bridge sphere $\Sigma_L$ for a nontrivial link $L$ is unperturbed is to show that there is no weak reducing pair for $\Sigma_L$. 
Being keen weakly reducible implies the following.

\begin{figure}
    \centering
    \includegraphics[width=.7\textwidth]{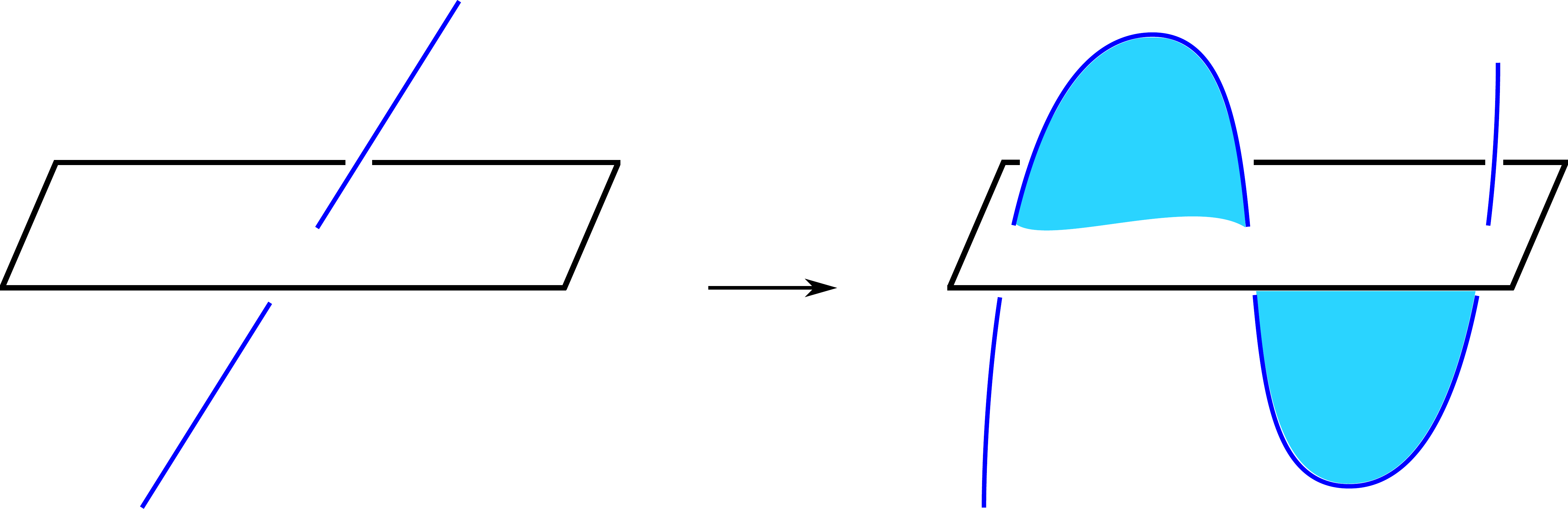}
    \caption{Introducing a cancelling pair of critical points.}
    \label{fig:perturbing}
\end{figure}

\begin{prop}\label{prop:nonperturbed}
If $\Sigma_L$ is keen weakly reducible, then $\Sigma_L$ is unperturbed.
\end{prop}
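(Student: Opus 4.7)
The plan is to argue by contradiction: assume $\Sigma_L$ is perturbed and exhibit two distinct weak reducing pairs, contradicting keenness. Suppose there are bridge disks $D_+^1\subset V_+$ and $D_-^1\subset V_-$ meeting in a single point $p\in L$, so that the arcs $\alpha_+^1\subset D_+^1\cap L$ and $\alpha_-^1\subset D_-^1\cap L$ share the endpoint $p$. Since keen weak reducibility forces $\mathcal{D}(\Sigma_L)\neq\emptyset$, and a $1$-bridge sphere in $S^3$ has empty disk complex, one may assume the bridge number satisfies $b\geq 2$.

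The first weak reducing pair I would produce is $(dD_+^1,dD_-^1)$, obtained by taking frontiers of the perturbing bridge disks. Because $D_+^1\cap D_-^1$ is the single point $p\in L$ rather than a genuine intersection in the complement of $L$, the regular neighborhoods defining the frontiers can be shrunk so that $\partial(dD_+^1)$ and $\partial(dD_-^1)$ are disjoint on $\Sigma_L$, making this pair weakly reducing. For the second pair, I would pick another top-side arc $\alpha_+^2\neq\alpha_+^1$, which is available since $b\geq 2$, together with a bridge disk $D_+^2$, and consider its frontier $dD_+^2$. The boundary $\partial(dD_+^2)$ bounds a twice-punctured subdisk of $\Sigma$ whose enclosed punctures are the endpoints of $\alpha_+^2$; these are disjoint from the punctures enclosed by $\partial(dD_+^1)$, so $dD_+^2$ is not isotopic to $dD_+^1$ as a vertex of $\mathcal{D}(\Sigma_L)$. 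Likewise, the puncture pair enclosed by $\partial(dD_+^2)$ (endpoints of $\alpha_+^2$) is disjoint from the puncture pair enclosed by $\partial(dD_-^1)$ (endpoints of $\alpha_-^1$, namely $p$ and the other endpoint of $\alpha_-^1$).

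The main technical point is to verify that this disjointness of puncture sets allows one to isotope $\partial(dD_+^2)$ and $\partial(dD_-^1)$ to be disjoint on the $2b$-punctured sphere $\Sigma_L$. This follows from the standard classification of essential simple closed curves on a multi-punctured sphere: two such curves each bounding a twice-punctured disk on disjoint puncture sets admit disjoint representatives. Once this step is in place, $(dD_+^2,dD_-^1)$ is a second weak reducing pair distinct from $(dD_+^1,dD_-^1)$, contradicting the uniqueness guaranteed by keenness and forcing $\Sigma_L$ to be unperturbed.
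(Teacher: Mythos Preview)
Your argument has a genuine gap: the pair $(dD_+^1,dD_-^1)$ is \emph{not} a weak reducing pair. The curve $\partial(dD_+^1)$ bounds a twice-punctured disk in $\Sigma_L$ whose punctures are the endpoints $\{p,q\}$ of $\alpha_+^1$, while $\partial(dD_-^1)$ bounds a twice-punctured disk whose punctures are the endpoints $\{p,r\}$ of $\alpha_-^1$. These two puncture sets both contain $p$. If two disjoint simple closed curves on a sphere cut it into two disks $D_1,D_2$ and an annulus, then $D_1\cap D_2=\varnothing$; running through the four possible ways to distribute $\{p,q\}$ and $\{p,r\}$ between the sides of the two curves, one always finds a point forced into $D_1\cap D_2$ (using that there is at least one additional puncture when $b\ge 2$). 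So $\partial(dD_+^1)$ and $\partial(dD_-^1)$ necessarily intersect on $\Sigma_L$, and shrinking the regular neighborhoods does not help. With your first pair gone, even a valid second pair would not contradict keenness.

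There is a second problem: the assertion that ``two curves each bounding a twice-punctured disk on disjoint puncture sets admit disjoint representatives'' is false. The isotopy class of a curve enclosing a given pair of punctures is not determined by that pair; for instance, if $\gamma$ encloses $\{2,3\}$ and $A_0$, $B$ are standard curves enclosing $\{1,2\}$ and $\{3,4\}$ respectively, then $A=T_\gamma(A_0)$ still encloses $\{1,2\}$ but satisfies $i(A,B)=i(\gamma,A_0)\,i(\gamma,B)=4>0$. So your justification that $(dD_+^2,dD_-^1)$ is a weak reducing pair does not stand as written; one must choose $D_+^2$ with care, not merely appeal to its puncture set.

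The paper's proof avoids both issues. It first disposes of $b=2$ separately, then for $b\ge 3$ it performs an explicit re-routing surgery (Figure~\ref{fig:reroute}) on a complete system $\mathcal{A}_-$ of lower bridge disks so that they meet $D_+^1$ only at the two punctures in $\partial D_+^1$. This forces some $D_-^2\in\mathcal{A}_-$ to be genuinely disjoint from $D_+^1$, yielding the pair $(dD_+^1,dD_-^2)$; the symmetric construction yields $(dD_+^2,dD_-^1)$, and since $dD_+^1$ and $dD_+^2$ enclose different puncture pairs, the two weak reducing pairs are distinct. The re-routing step is exactly the technical ingredient you are missing.
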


It is well known that a perturbed bridge sphere has a weak reducing pair; we prove that result here, and show that such a bridge sphere in fact has at least two weak reducing pairs.

\begin{proof}
If $\Sigma_L$ is a perturbed bridge sphere for a link $L$ in 2-bridge position, then $L$ must be the unknot and there exists a unique compressing disk $D$ above and a unique compressing disk $E$ below. Furthermore, $D \cap E \neq \emptyset,$ which implies that $\Sigma_L$ does not admit a weak reducing pair, and therefore $\Sigma_L$ cannot be keen weakly reducible. To complete the proof, we consider perturbed bridge spheres for links in $b$-bridge position, where $b\geq 3$.

\begin{figure}
    \centering
    \includegraphics[width=.7\textwidth]{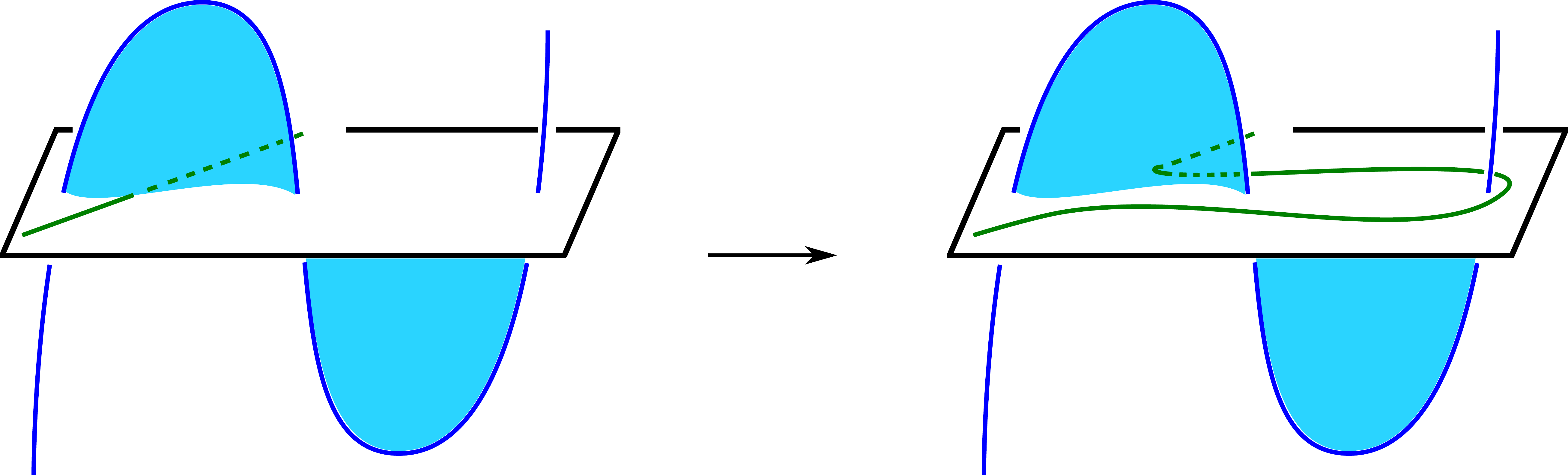}
    \caption{Bridge disks below $\Sigma_K$ can be isotoped to intersect $D_+^1$ in two points in $K$.}
    \label{fig:reroute}
\end{figure}

Suppose that $\Sigma_L$ is perturbed. By definition, there exist bridge disks $D_+^1 \subset V_+$ and $D_-^1 \subset V_-$ such that $D_+^1 \cap D_-^1 = \lbrace p \rbrace \in L.$
Let \(\mathcal{A}_+\) be a set of \(b\) disjoint bridge disks for \(\Sigma\), each corresponding to one of the components of \(\alpha_+\), and suppose further that \(D_+^1\in\mathcal{A}_+\).
Let \(\mathcal{A}_-\) be a similarly defined set of bridge disks below \(\Sigma\) with \(D_-^1\in\mathcal{A}_-\).
(We are able to define \(\mathcal{A}_+\) and \(\mathcal{A}_-\) \textit{after} $D_+^1$ and $D_-^1$ by Lemma 3.2 of \cite{scharlemann2005thin}.) The elements of \(\mathcal{A}_-\) may or may not intersect the interior of the arc \(D_+^1 \cap \Sigma\).
Below, we describe how if they do, we can replace them with another set of \(b\) disjoint bridge disks below \(\Sigma\), each of which is disjoint from the interior of \(D_+^1 \cap \Sigma\).

Suppose that the elements of  \(\mathcal{A}_-\) intersect the interior of $D_+^1 \cap \Sigma$. 
Consider a point \(q\) of intersection closest to $p$.
Let \(D_-'\in\mathcal{A}_-\) denote the bridge disk containing \(q\).
We perform a surgery on \(D_-'\) as depicted in Figure \ref{fig:reroute}, resulting in a new disk \(D_-''\).
Notice that \(D_-''\) is disjoint from the other elements of \(\mathcal{A}_-\), and \(D_-'\) and \(D_-''\) both correspond to the same bridge arc.
In slight abuse of notation, we will replace \(D_-'\) with \(D_-''\) in the collection \(\mathcal{A}_-\).
After this replacement, \(\mathcal{A}_-\) remains a collection of pairwise disjoint bridge disks for the bridge arcs below \(\Sigma\). 
The difference is that now, the elements of \(\mathcal{A}_-\) intersect the interior of \(D_+^1 \cap \Sigma\) in one fewer point.

We can repeatedly perform such surgeries until the bridge disks of \(\mathcal{A}_-\) are all disjoint from the interior of $D^1_+\cap\Sigma$.
It follows that the bridge disks of \(\mathcal{A}_-\) intersect \(D^1_+\) only in the two points of $D_+^1\cap L \cap \Sigma$, each of which intersects a bridge disk of \(\mathcal{A}_-\). Since \(L\) is in a \(b\) bridge position with \(b>2\), these two bridge disks must be distinct. In addition to these two, there must be at least one more bridge disk $D^2_-\in\mathcal{A}_-$ since \(b\geq 3\), and so \(D^2_-\) is disjoint from $D^1_+$.
Therefore, $dD_+^1$ and $dD^2_-$ comprise a weak reducing pair for $\Sigma_L.$ 

Now consider \(dD_-^1\).
We can mimic the trick in the previous paragraph so that a particular set of \(b\) pairwise disjoint bridge disks above $\Sigma$ intersects $D_-^1$ only in the two points of $D_-^1\cap L\cap \Sigma$. 
Then there is some bridge disk \(D^2_+\) disjoint from \(D_-^1\), which means that $dD_-^1$ and $dD^2_+$ comprise another weak reducing pair for $\Sigma_L.$ Therefore, a perturbed bridge sphere never admits a unique weak reducing pair and can never be keen weakly reducible.
\end{proof}

\subsection{Width complex}
Suppose that $L$ is a link and $h:S^3\rightarrow \mathbb{R}$ is the standard Morse function. Assume also that $\left.h\right|_L$ is a Morse function. Suppose that $c_1 < ... < c_n$ are critical values of $\left.h\right|_L$. Consider $h^{-1}(r_i)$, where $r_i$ is a regular value between $c_i$ and $c_{i+1}$. We say that a level sphere $h^{-1}(r_i)$ is a \textit{thin level} if $|h^{-1}(r_{i-1}) \cap L| > |h^{-1}(r_{i}) \cap L|$ and
$|h^{-1}(r_{i}) \cap L| < |h^{-1}(r_{i+1})  \cap L|.$ On the other hand, a level sphere $h^{-1}(r_i)$ is a \textit{thick level} if $|h^{-1}(r_{i-1}) \cap L| < |h^{-1}(r_{i}) \cap L|$ and
$|h^{-1}(r_{i}) \cap L| > |h^{-1}(r_{i+1})  \cap L|.$ We say that a disk $D\subseteq S^3\backslash L$ is a \textit{strong upper (resp.\ lower) disk} with respect to $h^{-1}(r_i)$ if 
\begin{enumerate}
    \item $\partial D = \alpha \cup \beta$ where $\alpha \subset L$ contains exactly one maximal (resp.\ minimal) point and $\beta$ is an arc in $h^{-1}(r_i)$, and
    \item the interior of $D$ does not intersect any level sphere $h^{-1}(r_j)$, where $1 \leq j \leq n-1.$
\end{enumerate}

If there exists a strong upper disk and a strong lower disk intersecting in exactly one point lying in $L$ (see Fig.\ \ref{fig:perturbing}, for instance), then there is an isotopy that cancels a maximal point and a minimal point. We call such a move a \textit{Type I move}. On the other hand, if there exists a strong upper disk and a strong lower disk that are disjoint, then there is an isotopy that interchanges a maximal point and a minimal point. We call such a move a \textit{Type II move}. 

In \cite{schultens2009width}, Schultens associated to a knot $K$ a graph called the \textit{width complex of $K$} to understand the structure of the collection of Morse embeddings of a fixed knot $K$. Two embeddings $k$ and $k'$ of $K$ are considered to be \textit{equivalent} if their thin and thick levels are isotopic. With this definition of equivalence, each vertex of the width complex is an equivalence class of embeddings of $K$ such that $\left.h\right|_K$ is a Morse function. An edge connects two vertices representing embeddings $k$ and $k'$ if $k$ differs from $k'$ by one of the following moves: a Type I move, the inverse of a Type I move, a Type II move, or the inverse of a Type II move. Schultens proved the following interesting result.

\begin{thm}[\cite{schultens2009width}]\label{thm:shultens2009width}
The width complex of a knot is connected.
\end{thm}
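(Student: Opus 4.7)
The plan is a one-parameter Morse-theoretic argument in the spirit of Cerf theory. Fix two Morse embeddings $k_0,k_1$ of $K$ representing vertices $v_0,v_1$ of the width complex; I want to produce a finite edge path from $v_0$ to $v_1$. Since both parametrize the same knot type, there is a smooth ambient isotopy $\Phi_t$ of $S^3$ with $\Phi_0=\mathrm{id}$ and $\Phi_1\circ k_0 = k_1$. Setting $k_t=\Phi_t\circ k_0$, the one-parameter family of real-valued functions $f_t=h\circ k_t$ on $S^1$ is the object to analyze.

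Next, by a standard transversality argument in $C^\infty(S^1,\mathbb{R})$, I would perturb $\Phi$ (keeping the two endpoints fixed) so that $f_t$ is Morse for all $t$ outside a finite set $0<t_1<\cdots<t_N<1$ and so that at each singular $t_i$ exactly one codimension-one degeneracy occurs, namely either (a) a birth or death of a cancelling pair of critical points of $f_t$, or (b) a transverse crossing of two distinct critical values of $f_t$. On each open interval $(t_{i-1},t_i)$ the Morse data of $f_t$ varies by ambient isotopy of level spheres, so the thin/thick-level equivalence class $[k_t]$ is constant there. This yields a sequence of vertices $w_0=v_0,w_1,\ldots,w_N=v_1$ of the width complex, one from each component of $[0,1]\setminus\{t_1,\ldots,t_N\}$.

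It remains to show that consecutive $w_i,w_{i+1}$ are joined by an edge. In case (a), the standard local model for birth/death gives a small bridge disk for the emerging maximum and a small bridge disk for the emerging minimum; both are strong disks in the sense of the text and they meet in the single point of $K$ at the nascent critical pair, so the transition is precisely a Type I move or its inverse. In case (b), if the two crossing critical values correspond to critical points of the same Morse index, the sequence of cardinalities $|h^{-1}(r)\cap K|$ between consecutive critical heights is unchanged and $w_i=w_{i+1}$. Otherwise we have a local maximum $p$ and a local minimum $q$ at adjacent critical heights, and the bridge arcs through $p$ and $q$ admit disjoint strong upper and lower disks realizing the interchange, which is a Type II move. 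Concatenating the edges produced at each $t_i$ yields the desired path.

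The main obstacle is the verification in case (b) that the two strong disks can be arranged disjoint for a max-min crossing, rather than glued along a point on $K$ as in case (a). One must use genericity to guarantee that at the moment of crossing the local pictures near $p$ and near $q$ sit inside disjoint horizontal slabs containing no other critical level, so that the bridge disks realizing the interchange can be chosen disjoint; the generic perturbation in $C^\infty(S^1,\mathbb{R})$ is exactly what makes this possible. Once this technical point is in hand, the argument reduces to a direct application of one-parameter Morse theory.
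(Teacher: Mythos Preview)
Your Cerf-theoretic argument is correct but takes a genuinely different route from the one the paper (following Schultens) sketches. The paper's approach is extrinsic and diagrammatic: project $k$ and $k'$ to the vertical plane, connect the two diagrams by a finite sequence of Reidemeister moves and planar isotopies, and then check case by case that each such local move either leaves the thin/thick equivalence class unchanged or alters it by exactly one Type~I or Type~II move. Your approach is intrinsic: you analyze the generic one-parameter family $f_t=h\circ k_t$ directly and read the edges off from birth/death events and critical-value crossings, never passing to a diagram. What your approach buys is conceptual cleanliness and a direct link to standard one-parameter Morse theory; what it costs is the verification you flag in case~(b). On that point, a useful observation that sharpens your genericity remark is that a local maximum $p$ and a local minimum $q$ which are \emph{adjacent along $K$} necessarily satisfy $h(p)>h(q)$, since the arc of $K$ between them is monotone; hence the max--min pair whose critical values cross can never be adjacent on $S^1$. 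Their bridge arcs are therefore disjoint subarcs of $K$, and the corresponding strong upper and lower disks lie on opposite sides of the intermediate level sphere, meeting it only along their $\beta$-arcs, which can then be isotoped apart. The Reidemeister approach replaces this analytic check with a longer but entirely elementary case analysis of local diagram moves.
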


The proof of Theorem \ref{thm:shultens2009width} uses the fact that projections of $k$ and $k'$ to the vertical plane differ by a finite number of Reidemeister moves and planar isotopy. Furthermore, each of these local moves either affects an embedding by a Type I or a Type II move or does not alter the equivalence class at all. 
As any two projections of a multi-component link $L$ are also related by Reidemeister moves and planar isotopy, it follows that the width complex of a multi-component link is also connected.

A vertex that is particularly interesting is one representing an embedding that admits no Type I nor Type II moves. Such an embedding is said to be in \textit{locally thin position}.

\begin{prop}
Suppose that $l$ is an embedding of a link $L$ in bridge position with respect to a keen weakly reducible bridge sphere. In the width complex of $L$, there is an edge between $l$ and an embedding $l'$ of $L$ in a locally thin position.
\end{prop}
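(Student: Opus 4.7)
The plan is to use the uniqueness of the weak reducing pair of $\Sigma_L$ both to construct an edge in the width complex and to rule out further moves on its other endpoint. By Proposition \ref{prop:nonperturbed}, $l$ admits no Type I move, so I need only produce a Type II move to some $l'$ and then show $l'$ has neither Type I nor Type II moves available.

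First I would show that the unique weak reducing pair $(D,E)$ can be realized as $(dD_+, dD_-)$ for a pair of disjoint bridge disks $D_+\subset V_+$ and $D_-\subset V_-$. If $\partial D$ enclosed the feet of two or more bridge arcs of $\alpha_+$, then a bridge disk $D_+'$ for one such arc, chosen inside the ball cut off by $D$ in $V_+$, would have frontier $dD_+'\neq D$ with $\partial(dD_+')$ contained in the subdisk of $\Sigma$ cut off by $\partial D$; with a careful choice of which bridge arc and which bridge disk I use, $(dD_+', E)$ would form a second weak reducing pair, contradicting keenness. So $D=dD_+$ and symmetrically $E=dD_-$. The disjointness of $\partial D$ and $\partial E$ on $\Sigma$ then lets me isotope $D_+$ and $D_-$ to be disjoint bridge disks, which serve as a disjoint strong upper disk and strong lower disk at $\Sigma$. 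The corresponding Type II move swaps the max $M$ in $D_+$ with the min $N$ in $D_-$, producing $l'$.

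Second I would verify $l'$ is locally thin. The embedding $l'$ has exactly one thin level between the displaced $M'$ (max) and $N'$ (min), and two thick levels. At the thin level, no strong upper or strong lower disk exists: the critical point immediately above is the min $N'$ and immediately below is the max $M'$, while the disk-interior-avoiding-regular-levels condition forbids using any max or min farther away. At a thick level of $l'$, say the upper one, any potential Type II move would swap $N'$ with some $H_j$ (where $H_j\neq M$) via disjoint bridge disks $B_+$ for $H_j$ and $B_-$ for $N'$. Applying the inverse of the Type II move, supported near $D_+$ and $D_-$, I can arrange (after isotoping $B_+, B_-$ off the support) that $B_+$ becomes a bridge disk for $H_j$ in $V_+$ and $B_-$ becomes a bridge disk for $N$ in $V_-$, still disjoint. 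Their frontiers $(dB_+, dB_-)$ form a weak reducing pair for $\Sigma_L$ with $dB_+\neq D$ (since $H_j\neq M$), contradicting uniqueness. The lower thick level is handled symmetrically, and any hypothetical Type I move at a thick level of $l'$ would pull back to a perturbing pair of disks near $\Sigma_L$ in $l$, from which the surgery technique of Proposition \ref{prop:nonperturbed} would extract an additional weak reducing pair, again contradicting keenness.

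The main obstacle is the pull-back step: making precise how $B_+$ and $B_-$ can be isotoped off the support of the original swap isotopy without destroying their disjointness or their status as bridge disks, and ensuring that the resulting weak reducing pair on $\Sigma_L$ is genuinely distinct from $(D,E)$. The first concern requires a careful choice of representatives in the isotopy classes of $B_+$ and $B_-$, while the second relies on tracking the bridge arc each disk corresponds to through the isotopy; since the swapped max $M$ becomes $M'$ below the thin level of $l'$, while the max involved in the hypothetical second move is above the thin level, the distinctness is automatic once the pull-back is set up properly.
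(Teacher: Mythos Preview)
Your proposal is correct and follows essentially the same approach as the paper: both first argue that each of $\partial D$ and $\partial E$ must cut off a twice-punctured disk from $\Sigma_L$ (equivalently, that $D$ and $E$ are frontiers of bridge disks), use the resulting disjoint strong upper/lower disks to perform a Type II move to $l'$, and then rule out further Type I or Type II moves on $l'$ by pulling any such pair of strong disks back to a second weak reducing pair for $\Sigma_L$, contradicting keenness. Your write-up is in fact more explicit than the paper's on the pull-back step (which the paper handles in a single sentence), and you correctly flag that step as the place requiring the most care.
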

\begin{proof}
Let $D\subseteq V_+$ and $E\subseteq V_-$ be a weak reducing pair for a keen weakly reducible bridge sphere $\Sigma_L$.

\textbf{Claim:} $\partial D$ and $\partial E$ each cut out a twice punctured disk from $\Sigma_L$.
\begin{proof}[Proof of claim]
Suppose that $\partial D$ cuts $\Sigma_L$ into two components $F_1$ and $F_2$, where each component is a punctured disk containing more than two punctures. 
The loop $\partial E$ is contained in one of the components, say $F_1.$ There exists at least one bridge disk $D^1_+$ such that $\partial D^1_+ = \alpha \cup \beta$ where $\alpha \subset L$ and $\beta \subset F_2$. Then, $dD^1_+$ and $E$ give rise to a weak reducing pair distinct from $D$ and $E,$ which is a contradiction.
The same argument also implies \(\partial E\) cuts out a twice-punctured disk from \(\Sigma_L\).
\end{proof}
Observe that $D$ cuts off a 3-ball containing a unique bridge disk, which is a strong upper disk disjoint from a strong lower disk contained in a 3-ball cut off by $E.$ 
This pair of disks gives rise to a Type II move. After the Type II move is performed, there are neither Type I nor Type II moves left to perform because any pair of strong upper disk and strong lower disk (intersecting in one point of $L$ or mutually disjoint) that emerges after the Type II move on $D$ and $E$ will yield a distinct pair of strong upper disk and strong lower disk on $\Sigma_L$ and hence, $\Sigma_L$ admits more than one weak reducing pair, which is a contradiction.
\end{proof}

 After a Type II move is performed along $D$ and $E$, a thin level emerges. This thin level is incompressible because a compressing disk for this level would imply the existence of another weak reducing pair different from $D$ and $E.$ Thus, we obtain the following corollary.
\begin{cor}
A link with a keen weakly reducible bridge sphere contains an essential meridional surface in its exterior.
\end{cor}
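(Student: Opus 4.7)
The plan is to show that the thin level produced by the proposition immediately above serves as the sought‐after essential meridional surface in the link exterior $E(L) := S^3 \setminus \mathring{N}(L)$. That proposition provides a Type II move along the unique weak reducing pair $D \subset V_+$, $E \subset V_-$ for $\Sigma_L$, yielding a locally thin embedding $l'$ of $L$; the move interchanges the maximum sitting inside $B_D$ with the minimum inside $B_E$, and between them a new thin level $\Sigma'$ appears. Up to isotopy in $S^3 \setminus L$, $\Sigma'$ is obtained from $\Sigma_L$ by simultaneous compression along $D$ and $E$ followed by discarding the two twice-punctured spheres that split off (these exist by the claim in the previous proof). A count gives $|\Sigma' \cap L| = 2b-4$, which is at least $2$ since Proposition~\ref{prop:nonperturbed} combined with the existence of a weak reducing pair forces $b \geq 3$. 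Setting $S := \Sigma' \cap E(L)$ then gives an honest planar meridional surface with at least two boundary components.

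The main step is to prove $S$ is incompressible. I will argue by contradiction. Suppose $D' \subset E(L)$ is a compressing disk for $S$; then $D'$ is a compressing disk for $\Sigma'$ in $S^3 \setminus L$, and I may assume it lies on one side of $\Sigma'$. The two sides of $\Sigma'$ in $S^3$ are obtained from $V_+$ and $V_-$ by interchanging the 3-balls $B_D$ and $B_E$, so tubing $\partial D'$ back across the scar of the $D$-compression (if $D'$ lies above $\Sigma'$) or the $E$-compression (if below) produces a compressing disk $D^*$ for $\Sigma_L$. Because $\partial D'$ was essential on $\Sigma'$, $D^*$ is not isotopic to the corresponding original disk, and because the scars of $D$ and $E$ lie on opposite sides of $\Sigma_L$, $\partial D^*$ is automatically disjoint from the partner disk. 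The result is a weak reducing pair for $\Sigma_L$ distinct from $(D,E)$, contradicting keen weak reducibility.

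Next, $S$ is not boundary-parallel. When $b \geq 4$, $\chi(S) = 2 - (2b-4) \leq -2$, while every subsurface of the torus boundary $\partial E(L)$ has nonnegative Euler characteristic, so $S$ cannot be parallel into $\partial E(L)$. When $b = 3$, $S$ is a meridional annulus, and a product region between $S$ and a subannulus of $\partial E(L)$ would let us isotope a subarc of $L$ past $\Sigma'$, reducing the critical-point count and contradicting the local thinness of $l'$.

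The main obstacle I expect is the tubing step in the incompressibility argument: turning a compressing disk for $\Sigma'$ into a new compressing disk for $\Sigma_L$ that is disjoint from the appropriate partner disk. The geometric intuition that the scars of $\partial D$ and $\partial E$ sit on opposite sides of $\Sigma_L$ makes the reconstruction feasible, but carefully bookkeeping isotopy classes through the compression–reconstruction process is the technical heart of the argument.
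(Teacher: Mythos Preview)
Your approach is the same as the paper's: take the thin level produced by the Type~II move in the preceding proposition and argue that a compressing disk for it would manufacture a second weak reducing pair for \(\Sigma_L\), contradicting keenness. The paper's entire proof is that single sentence, so your write-up is considerably more detailed; you even address non-boundary-parallelism, which the paper omits. One small slip to watch in your tubing step: if \(D'\) lies above \(\Sigma'\), i.e.\ in \((V_+\setminus B_D)\cup B_E\), then producing a disk in \(V_+\) to pair with \(E\) requires dealing with the \(E\)-scar (the part of \(D'\) that may dip into \(B_E\)), not the \(D\)-scar---your labels appear to be swapped, though the conclusion and the rest of the argument are unaffected.
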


\section{Setting}\label{sec:setting}

In this section, we redevelop and summarize several tools and concepts of Johnson and Moriah, all found in \cite{johnson2016bridge}.
Specifically, Subsection \ref{subsec:plat_projections} is a brief summary of Johnson and Moriah's plat links and accompanying tools such as their \(\sigma_i\) and \(\pi_y\) projection maps.
Then in Subsection \ref{subsec:Plat_train_tracks}, we develop Johnson and Moriah's taos, eyelets, and train tracks and adapt them slightly to our situation.
Finally in Subsection \ref{subsec:carried_almost_carried} we define the concepts of carried and almost carried arcs, loops, and graphs in a manner very similar to that of Johnson and Moriah, differing only in some minor ways that suit our purposes.

\subsection{Plat Positions}\label{subsec:plat_projections}

Consider the standard Morse function $h:S^3 \rightarrow \mathbb{R}$ with exactly one maximum, $+\infty$, and one minimum, $-\infty.$ Let $\alpha \subset S^3$ be a strictly increasing arc such that $\partial \alpha = \lbrace \pm \infty \rbrace.$ 
We identify $S^3 \backslash \alpha$ with $\mathbb{R}^3$ with Cartesian coordinates $(x,y,z)$ in such a way that the \(xz\)-plane lies in \(h^{-1}(0)\), and more generally, for each \(t\in\R\), the plane \(y=t\) lies in \(h^{-1}(t)\). We orient our perspective so that the $x$-axis is horizontal, the $y$-axis is vertical, and the $z$-axis points towards the reader. 
(This allows us to use terms like ``up,'' ``down,'' ``left,'' and ``right.'')
We denote $h^{-1}(t)$ as $P_t$.

\begin{figure}
    \centering
    \includegraphics[width=.6\textwidth]{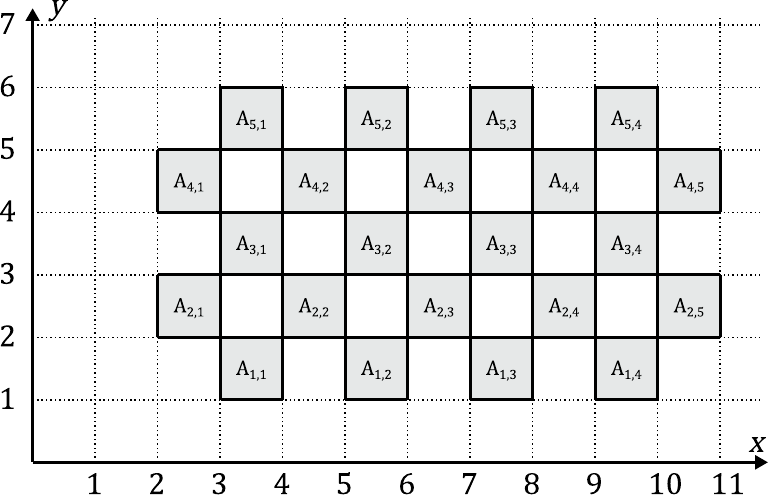}
    \caption{A \({(6,5)}\)-plat structure}
    \label{fig:190911PlatGrid}
\end{figure}

For each $y \in \mathbb{R},$ and $k \in \mathbb{Z}$, let $c_{y,k}$ be the circle of radius $\frac{1}{2}$ in $P_y$, centered at $x = k + \frac{1}{2}, z = 0.$ The \textit{plat tube} $A_{i,j}$ is defined to be the annulus
\begin{center}
    $A_{i,j} = \begin{cases} \lbrace c_{y,2j} \;\ | \;\ y \in [i,i+1]\rbrace & i \;\ \text{even} \\
\lbrace c_{y,2j+1} \;\ | \;\ y \in [i,i+1]\rbrace & i \;\ \text{odd}.
\end{cases}$

\end{center}

For $n,m \in \{2,3,4,\hdots\}$, the \textit{$(n,m)$-plat structure} is the union of the plat tubes $A_{i,j}$ where $i$ ranges from 1 to $n-1$ and $j$ either ranges from 1 to $m$ or 1 to $m-1$ depending upon whether $i$ is even or odd, respectively.

For $n,m \in \{2,3,4,\hdots\}$, an \textit{$(n,m)$-plat braid} is a union of $2m$ pairwise disjoint arcs in $\R^3$ whose projections to the $y$-axis are monotonic, satisfying the following properties:
\begin{enumerate}
    \item One endpoint of each arc lies in $P_1$ and the other endpoint lies in $P_n$.
    \item Each arc can be cut into subarcs, each of which is contained either in the \((n,m)\)-plat structure or in one of the vertical lines \(x=2\) or \(x=2m+1\) in the \(xy\)-plane.
    \item The intersection of the braid with each plat tube consists of a pair of arcs which intersect the plane $z=0$ in a minimal number of components and whose endpoints lie in $z=0.$
\end{enumerate}
Observe that the plane $z=0$ cuts $A_{i,j}$ into two disks.
Here we define the \textit{twist number} \(a_{i,j}\).
If the braid intersects $A_{i,j}$ in vertical arcs, then we define $a_{i,j}=0.$ 
Otherwise, the disk with non-negative $z$-coordinates contains some number of arcs of the plat braid whose projection to the plane $z=0$ is a set of parallel line segments.
We define $|a_{i,j}|$ to be this number of parallel arcs.
The sign of $a_{i,j}$ is defined to be the sign of the slope (\(\Delta y/\Delta x\))  of the line segments. 
The integer $a_{i,j}$ is called the \textit{twist number} for $A_{i,j}$.

For our purposes, we will only consider $(n,m)$-plat braids with $n$ even.
In this case, we can obtain a link from an $(n,m)$-plat braid by first connecting the point $(2j,1,0)$ to $(2j+1,1,0)$ for each $1 \leq j \leq m$ with the unique (up to isotopy) arc in the portion of the plane $z = 0$ which lies below the line $y=1$. Similarly, for each $1 \leq j \leq m$, we also connect the point $(2j,n,0)$ to the point $(2j+1,n,0)$ with the unique arc in the portion of plane $z = 0$ above the line $y=n$.
These \(2m\) arcs can be isotoped in the plane \(z=0\) (with respect to their endpoints) so that each is injective when projected to the \(x\)-axis and each contains either a single maximum or minimum point (with respect to \(h\)), with the result that the set of \(2m\) arcs is pairwise disjoint. The embedding of a link constructed as the union of the plat braid and these \(2m\) arcs in this way is said to be an \textit{\({(n,m)}\)-plat position of a link}.
If a link has an \({(n,m)}\)-plat position, it is called an \textit{\({(n,m)}\)-plat link}. A plat link is called \textit{\(k\)-twisted} if \(|a_{i,j}|\geq k\) for every twist number \(a_{i,j}\).

Throughout the rest of the paper, when discussing plat links, \(\alpha_+^1\hdots \alpha_+^m\) will refer to the bridge arcs above \(P_n\), labeled from left to right.
Likewise, let \(\alpha_-^1\hdots\alpha_-^m\) be the bridge arcs below \(P_1\), labeled from left to right.
Let \(L^i\) be the link component that contains \(\alpha_+^i\).
(Of course, since an \(m\)-bridge link may have fewer than \(m\) components, it may be that the component containing \(\alpha_+^i\) also contains \(\alpha_+^j\) for some \(j\neq i\), and so \(L^i=L^j\).)

There are two types of projection maps that we will often refer to. The first type of projection map is the Euclidean projection map $\sigma_i:\mathbb{R}^3 \rightarrow \mathbb{R}^3$ defined by $\sigma_i(x,y,z) = (x,i,z).$ The second type of projection map is the map $\pi_y:\mathbb{R} \times [1,n] \times \mathbb{R} \rightarrow P_y$, which sends each component of the plat braid to the corresponding point $(j,y,0),$ and extends to a homeomorphism from $P_{y'}$ to $P_y$ for each $y' \in [1,n].$ 
(In a slight abuse of notation, we will refer to this homeomorphism as \(\pi_y\).)

\begin{figure}
  \centering 
\includegraphics[width=1\textwidth]{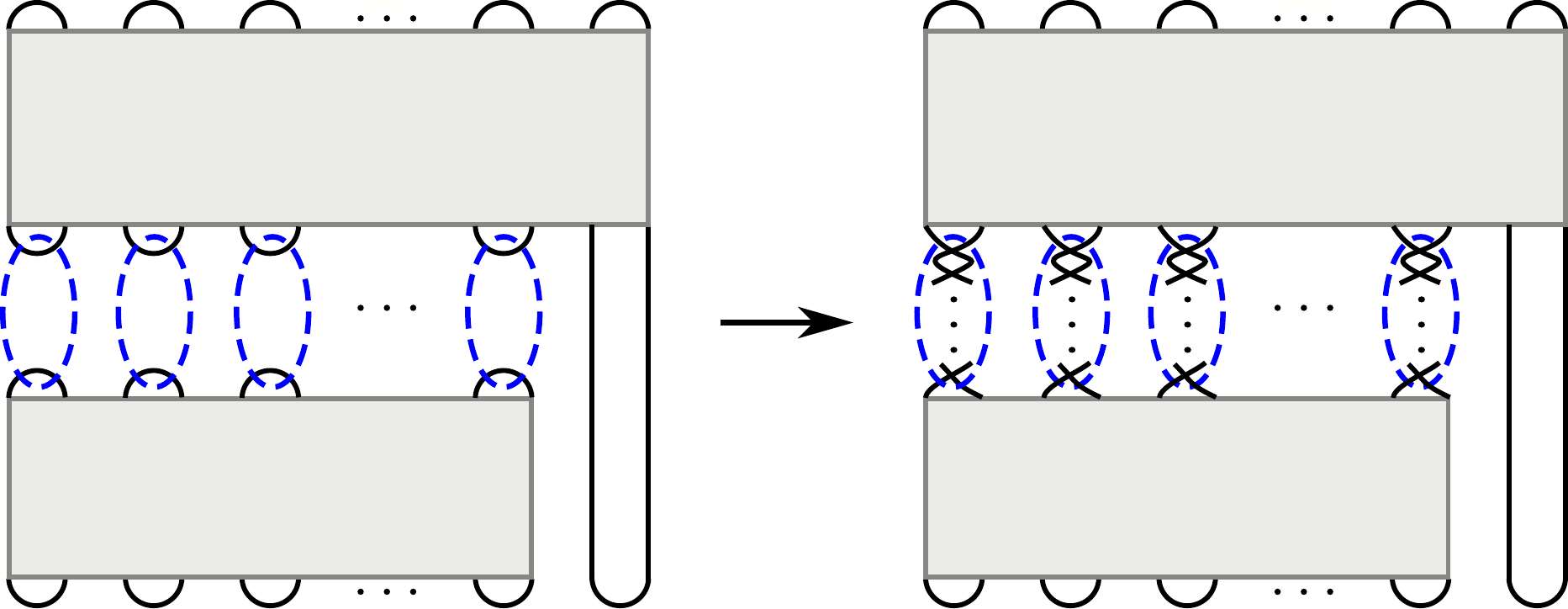}
\caption{The upper gray box contains \(D_a\), a 4-twisted \({(2m-4,m)}\)-plat braid, where \(m\geq 4\).
The lower gray box contains \(D_b\) a 4-twisted \({(n,m-1)}\)-plat braid. 
}
\label{fig:200320PlatTemplate}
\end{figure}

\begin{figure}
  \centering 
\includegraphics[width=.7\textwidth]{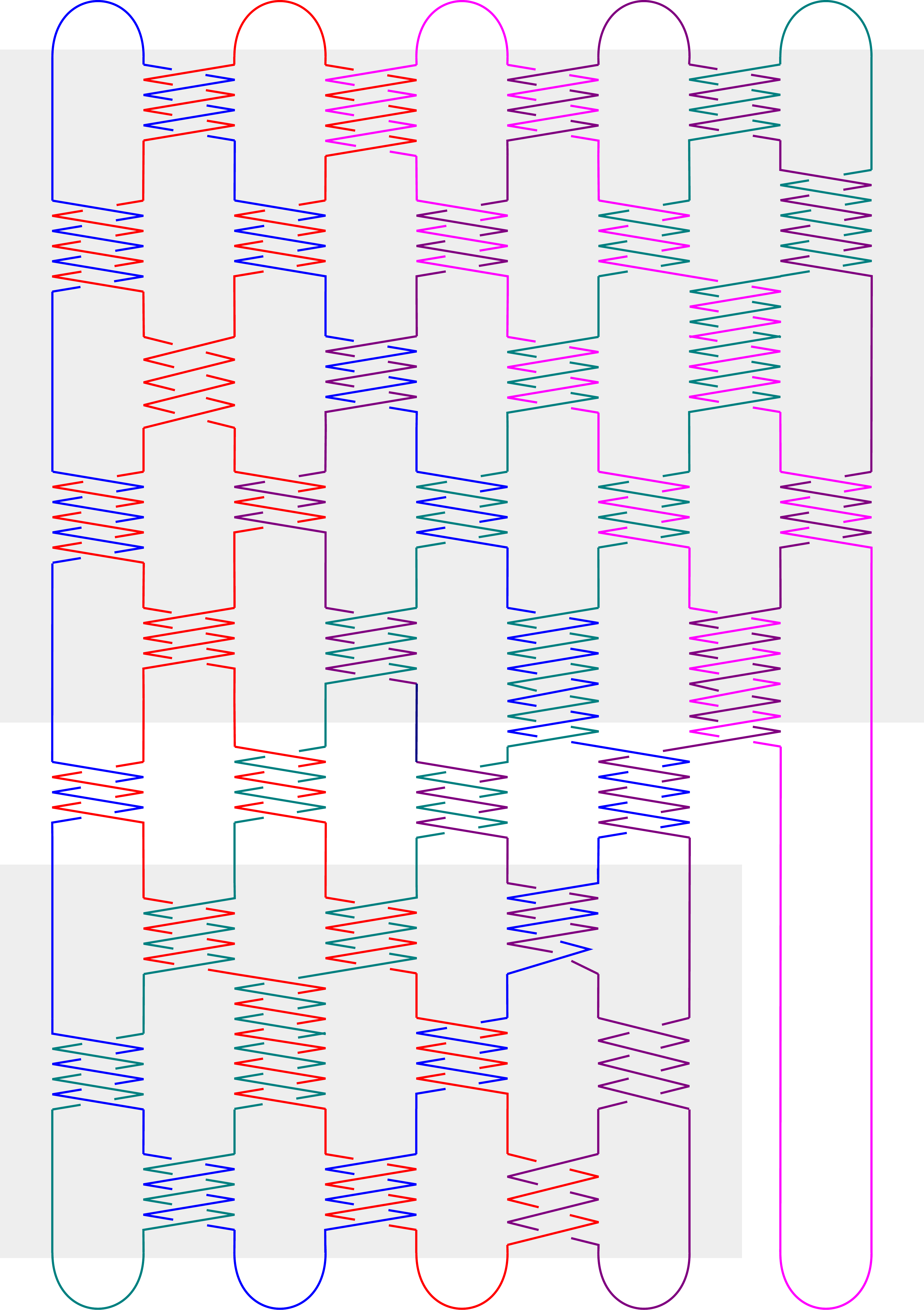}
\caption{A (10,5)-plat link $L \in \mathcal{L}.$}
\label{fig:200320PlatExampe6-5}
\end{figure}

\subsection{The Family of Links We Consider}\label{sec:family_of_links}

Let $D_a$ be a $4$-twisted \({(n_a,m)}\)-plat position of a link $L_a$ such that \(m\geq 4\) and  \(n_a=2m-4\), and let $D_b$ be a $4$-twisted \({(n_b,m-1)}\)-plat position of a link $L_b$. Position $D_a$ above $D_b$ as shown on the left of Figure \ref{fig:200320PlatTemplate}. Let $D_{ab}$ be an \({(n_a+n_b,m)}\)-plat position obtained from $D_a \sqcup D_b$ by replacing each of the 0-tangles in the dashed ovals with vertical half-twists as shown on the right of Figure \ref{fig:200320PlatTemplate}. 
(Note: The subscripts \(a\) and \(b\) are used for ``above'' and ``below.'')
We define \(\mathcal{L}\) to be the family of links constructed in this fashion which have the following additional properties.
\begin{enumerate}
\item\label{item:alternate-twisting} The rightmost twist regions of every row alternate in sign from row to row.
That is, the sign of the rightmost nonzero twist region of each row is opposite to the sign of the rightmost nonzero twist regions of any adjacent rows.
\item\label{item:twist-nos-for-L3} The sign of \(a_{n-1,2}\), the twist number for the second twist region in the top row of \(D_a\), is even, and every other twist region that involves \(L^3\) has an odd twist number.
(This forces \(L^3\) to be an unknot component containing the bridge arcs \(\alpha_+^3\) and \(\alpha_-^m\).)
\item\label{item:twist-nos-for-Lm}
The signs of the twist numbers for the twist regions involving \(L^m\) are chosen so that \(L^m\) contains the lower left bridge arc \(\alpha_-^1\).
\item\label{item:m-component} The rest of the twist numbers for \(D_{ab}\) are chosen so that \(D_{ab}\) is an \(m\)-component link and so that the bridges \(\alpha_+^m\) and \(\alpha_-^1\) are contained in the same link component, namely \(L^m\).
(It follows that for each \(i,j\in\{1,\hdots,m\}\), with \(i\neq j\), \(L^i\) is a distinct link component from \(L^j\).)
\item\label{item:nonsplit-pairs} Excluding the pair \(\{L^1,L^3\}\), every pair of link components comprises a two-bridge nonsplit sublink.
(Note: The sublink \(L^1\cup L^3\) will always be split no matter what set of twist numbers is chosen.)
\end{enumerate} 

Below in Proposition \ref{prop:family-exists}, we will show that \(\mathscr{L}\) is a nonempty set.
First, observe that $\mathcal{L}$ is a family of links in \({(n,m)}\)-plat position for $m \geq 4$ and \(n=n_a+n_b\) with the following conditions on the twist numbers:
\begin{enumerate}
\item For $i >n_b,$ $|a_{i,j}| \geq 4$ for all possible values of $j$.
 \item If $i$ is odd, $1 \leq i \leq n_b$, and $1 \leq j \leq m-2,$ (resp.\ $j = m-1$), then $|a_{i,j}| \geq 4$ (resp.\ $a_{i,j} = 0).$
 \item If $i$ is even, $1 \leq i \leq n_b$, and $1 \leq j \leq m-1,$ (resp.\ $j = m$), then $|a_{i,j}| \geq 4$ (resp.\ $a_{i,j} = 0).$
 \item If $a_{i,*}$ denotes the rightmost nonzero twist number in row $i$, then $a_{i,*}\cdot a_{i-1,*} < 0$ and $a_{i,*}\cdot a_{i+1,*} < 0$. In other words, the signs of the rightmost nonzero twist numbers alternate from row to row.

\end{enumerate}

Figure \ref{fig:200320PlatExampe6-5} shows an example of what $L \in \mathcal{L}$ may look like. In this case, $n=10,$ and $m=5.$
It follows from the definition of the family \(\mathscr{L}\) that for any \(L\in\mathscr{L}\), \(L^3\) is the component containing the lower right bridge arc \(\alpha_-^m\).

\begin{figure}
\centering 
\includegraphics[width=.8\textwidth]{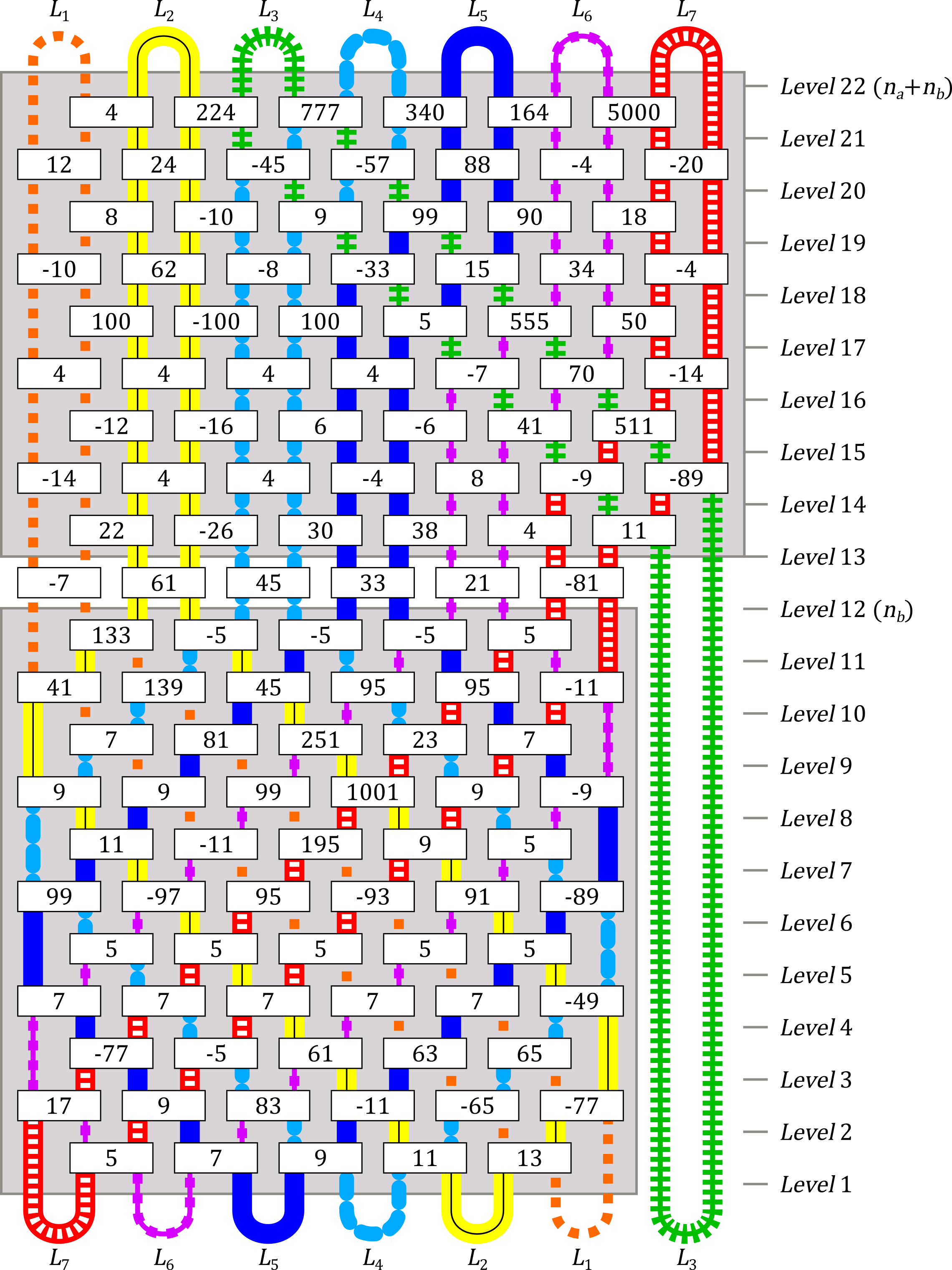}
\caption{This figure illustrates Proposition \ref{prop:family-exists}, showing an example a \(7\)-bridge link in \(\mathscr{L}\).
The seven different colors and line styles represent seven different link components. 
Each rectangle represents a twist region, and the integer inside each rectangle is the twist number indicating the number and sign of half twists present in that twist region.}
\label{fig:221008_Plat}
\end{figure}

\begin{prop}\label{prop:family-exists}
For each integer \(m\geq 4\), the family \(\mathscr{L}\) contains infinitely many links of bridge number \(m\).
\end{prop}

\begin{proof}
It is not difficult to satisfy properties \ref{item:alternate-twisting} and \ref{item:twist-nos-for-L3}.
We need to show that properties \ref{item:twist-nos-for-Lm}, \ref{item:m-component}, and \ref{item:nonsplit-pairs} can be satisfied.
We will do this by constructing an infinite family of examples for each bridge number \(m\geq 4\).

Fix \(m\geq 4\).
Let \(D_a\) and \(D_b\) be a \((2m-4,m)\)-plat link and an \((m-1, 2m-2)\)-plat link.
We will show that the right choices of the parities and magnitudes of the twist numbers of \(D_a\) and \(D_b\) will allow us to fulfill the conditions given above.

First, after choosing odd numbers for the particular twist numbers in \(D_a\) prescribed by property \ref{item:twist-nos-for-L3}, choose all even numbers for the rest of the twist numbers in \(D_a\).
Then choose any integers of magnitude at least four (regardless of parity) for the row of twist regions between \(D_a\) and \(D_b\).
The twist number choices we have made so far guarantee that at level \(n_b\), the punctures of \(P_{n_b}\) occur in pairs corresponding to the link components in this order, from left to right: \(L^1, L^2, L^4, L^5,\hdots,L^m,L^3\).
That is, they are arranged in numerical order from left to right except that the punctures of \(L^3\) appear at the end of the line.

Then for every twist region below \(P_{n_2}\) (i.e., the twist numbers corresponding to \(D_b\)), we choose all odd twist numbers.
This guarantees that the punctures of \(P_1\) occur in pairs corresponding from left to right to the link components \(L^m, L^{m-1},\hdots, L^5, L^4, L^2, L^1, L^3\).
That is, they area arranged in reverse numerical order, except that again, the punctures of \(L^3\) are at the end of the line.
Thus \(L\) is an \(m\)-component link whose lower left bridge arc is contained in \(L^m\), satisfying conditions \ref{item:twist-nos-for-Lm} and \ref{item:m-component}.

Since every twist number in \(D_b\) is odd, it follows that for each pair \(\{L^i,L^j\}\) of distinct link components from the set \(\{L^1, L^2, L^4,\hdots,L^m\}\) (the set of all link components excluding \(L^3\)), there are exactly four twist regions in \(D_b\) which involve both \(L^i\) and \(L^j\).
The choices of twist numbers in \(D_a\) guarantees that there is one twist region containing arcs of both \(L^2\) and \(L^3\), and there are exactly four twist regions containing arcs of both \(L^3\) and \(L^j\) for each \(j\geq 4\).
Now let \(\{L^i,L^j\}\) be any pair of link components except for the pair \(\{L^1,L^3\}\).
To satisfy condition \ref{item:nonsplit-pairs}, simply choose twist numbers so that the linking number of \(L^i\cup L^j\) is nonzero.
For example here is one way to do so.
There will be some positive number \(N\) of twist regions that involve strands from both \(L^i\) and \(L^j\).
For these twist regions, choose twist numbers \(t_1,t_2,\hdots,t_N\) so that \(|t_1|>\sum_{k=2}^N|t_k|\).
\end{proof}

\begin{prop}\label{prop:L-nonsplit}
Each link in the family \(\mathscr{L}\) is nonsplit.
\end{prop}

\begin{proof}
Let \(L\in\mathscr{L}\), and assume \(S\) is a splitting sphere for \(L\).
% The sphere \(S\) partitions the link components \(\{L^1,L^2,\hdots,L^m\}\) into two nonempty sets, \(A\) and \(B\).

\textbf{Case 1:} \textit{The link components \(L^1\) and \(L^2\) are both on the same side of \(S\).}
% Without loss of generality, assume \(L^1, L^2,L^4\in A\).
In this case, let \(L^j\) be a link component on the other side of \(S\).
Then by condition \ref{item:nonsplit-pairs} of the definition of \(\mathscr{L}\), \(L^2\cup L^j\) is a nonsplit link which is split by \(S\), a contradiction.

\textbf{Case 2:} \textit{The link components \(L^1\) and \(L^2\) are not both on the same side of \(S\).}
Then \(S\) is a splitting sphere for the nonsplit link \(L^1\cup L^2\), another contradiction.
\end{proof}

% \begin{obs}\label{obs:two-component-links}
% Let \(L\in\mathscr{L}\).
% The union of the first and third link components (i.e., \(L^1\cup L^3\)) is an unlink.
% For every other pair of link components \(\{L^i, L^j\}\) with \(i\neq j\), the union \(L^i\cup L^j\) is a two-component two-bridge link which is not an unlink.
% \end{obs}

% \textcolor{red}{Does that need justification? Does it matter what twist numbers we choose for the lower plat link?}

To say that a given compressing disk \(C\) is a \textit{cap} is to say that there exists some bridge disk \(D\) such that \(C=dD\).
If \(\alpha\) is the bridge arc corresponding to \(D\), then we say that \(C\) is a cap \textit{for} \(\alpha\).
It follows that \(\partial C\) cuts the bridge sphere into two components, one of which is a twice-punctured disk, where the two punctures are the intersection points of the bridge sphere with \(\alpha\).

\begin{prop}\label{prop:corner-caps}
Let \(D\) and \(E\) be compressing disks above and below \(P\), respectively.
If \(\{D,E\}\) is a weak reducing pair for \(P\), then \(D\) is a cap for \(\alpha_+^1\), and \(E\) is a cap for \(\alpha_-^m\).
\end{prop}

\begin{proof}
The loop \(\partial D\subset P\) partitions the link components of \(L\) into two nonempty sets, \(A\) and \(A'\) (based on which side of \(\partial D\) the punctures of each link component lie).
Let \(A\) be the set containing \(L^1\).
The loop \(\partial E\) also partitions the link components into two nonempty sets, \(B\) and \(B'\).
Let \(B\) be the set containing \(L^3\).
If \(A\) contains \(L^i\) for any \(i\neq 1\), then \(\{D,E\}\) is a weak reducing pair for the sublink \(L^i\cup L^3\), a nonsplit 2-bridge link, a contradiction.
Similarly, if \(B\) contains \(L^j\) for any \(j\neq 3\), then \(\{D,E\}\) is a weak reducing pair for the sublink \(L^1\cup L^j\), a nonsplit 2-bridge link, another contradiction.
Therefore \(D\) is a cap for \(\alpha_+^1\), the bridge arc above \(P\) contained in \(L^1\), and \(E\) is a cap for \(\alpha_-^m\), the bridge arc below \(P\) contained in \(L^3\).
\end{proof}

The rest of the paper will be devoted to proving that each $L \in \mathcal{L}$ admits a keen weakly reducible bridge sphere. 
The reason Proposition \ref{prop:corner-caps} does not immediately imply this is because for any given bridge arc, there are infinitely many distinct caps for that bridge arc, provided there are at least three bridges on each side of the bridge sphere, which is the case for all of the links in \(\mathscr{L}\).

\begin{figure}
      \centering 
    \includegraphics[width=.2\textwidth]{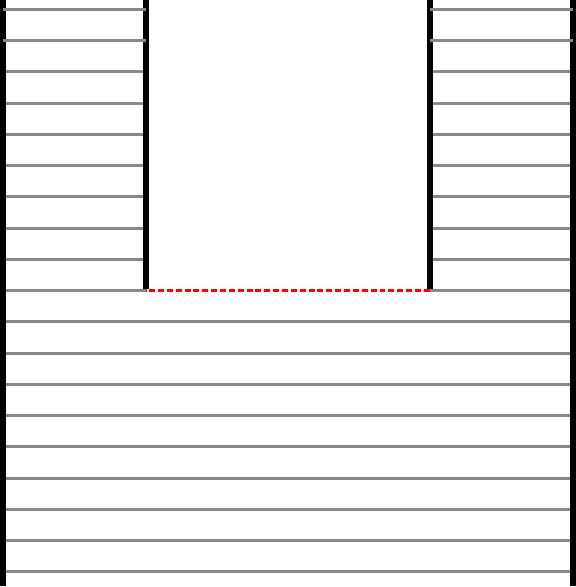}
    \caption{A train track at a singular fiber. 
    The closed red line segment is a switch.}
    \label{fig:200410SingularFiber}
\end{figure}

\subsection{Plat Train Tracks}\label{subsec:Plat_train_tracks}
Speaking generally, let $\Sigma_L$ denote a bridge sphere, and let $I$ denote a closed unit interval. A \textit{train track} $\tau$ is a compact subsurface of $\Sigma_L$ whose interior is fibered by open intervals and the fibration extends to a fibration of $\tau$ by closed intervals except for at finitely many intervals called \textit{singular fibers}. Let $\alpha$ be a singular fiber, and denote its closed neighborhood in $\tau$ by $\overline{N}(\alpha)$. Then, there is a homeomorphism $f:\overline{N}(\alpha) \ra (I \times I) \backslash \left(\left(\frac{1}{4},\frac{3}{4}\right)\times \left(\frac{1}{2},1\right]\right)$ such that $f(\alpha)=I\times \left\lbrace \frac{1}{2} \right\rbrace$. We will refer to the inverse image of $(I\times \left\lbrace \frac{1}{2} \right\rbrace) \backslash \left(\left(I \times \left[0,\frac{1}{4}\right]\right) \sqcup \left(I \times \left[\frac{3}{4},1\right]\right)\right)$ under $f$ as a \textit{switch} of $\tau$.
See Figure \ref{fig:200410SingularFiber}.

In this paper, we will assign a train track $\tau_i$ to each bridge sphere $P_i$ for $i = 1,2,\dots,n-1$. 
(There is no need for a train track at the top level \(P_n\).)
To this end, we will construct a certain trivalent graph, called a train graph, on each bridge sphere based on the parity of $i$ and the twist numbers $a_{i,j}$ for the row.
The train track will then be constructed from the train graph in a natural way.

We define a \textit{train graph} to be a connected trivalent graph with the property that the three edges incident to each vertex are tangent to each other at the vertex, and not all three edges emanate from the vertex in the same direction.
(See the left side of Figure \ref{fig:200417_TrainTrackConstruction}.) 
Below, we will construct a specific train graph \(T_i\) embedded in \(P_i\) for each \(i=1,2,\hdots,n-1\), and these train graphs will have the property that \(P_i\backslash T_i\) consists of \(2m\) once-punctured disks and one (non-punctured) disk.
We will informally express this by saying that each puncture is ``surrounded by'' \(T_i\).

To construct each train graph, there are various cases to consider. Recall from Subsection \ref{sec:family_of_links} that $\mathcal{L}$ is a family of links in \({(n,m)}\)-plat position for \(n=n_a+n_b\).
If $i$ is odd and $n-1 \geq i \geq n_b+1$ (resp. $i < n_b+1$), we define $\ell_{i,j}$ to be the circle in $P_i$ centered at $\left(2j+\frac{3}{2},i,0\right)$ with radius $\frac{3}{4}$ for $j = 1,2,\dots,m-1$ (resp. for $j=1,2,\dots,m-2).$ 
If $i$ is even and $n-2 \geq i \geq n_b+1$ (resp. $i < n_b+1)$, we define $\ell_{i,j}$ to be the circle in $P_i$ centered at $\left(2j+\frac{1}{2},i,0\right)$ with radius $\frac{3}{4}$ for $j = 1,2,\dots,m$ (resp. for $j=1,2,\dots,m-1).$

\begin{figure}
      \centering
    \includegraphics[width=.4\textwidth]{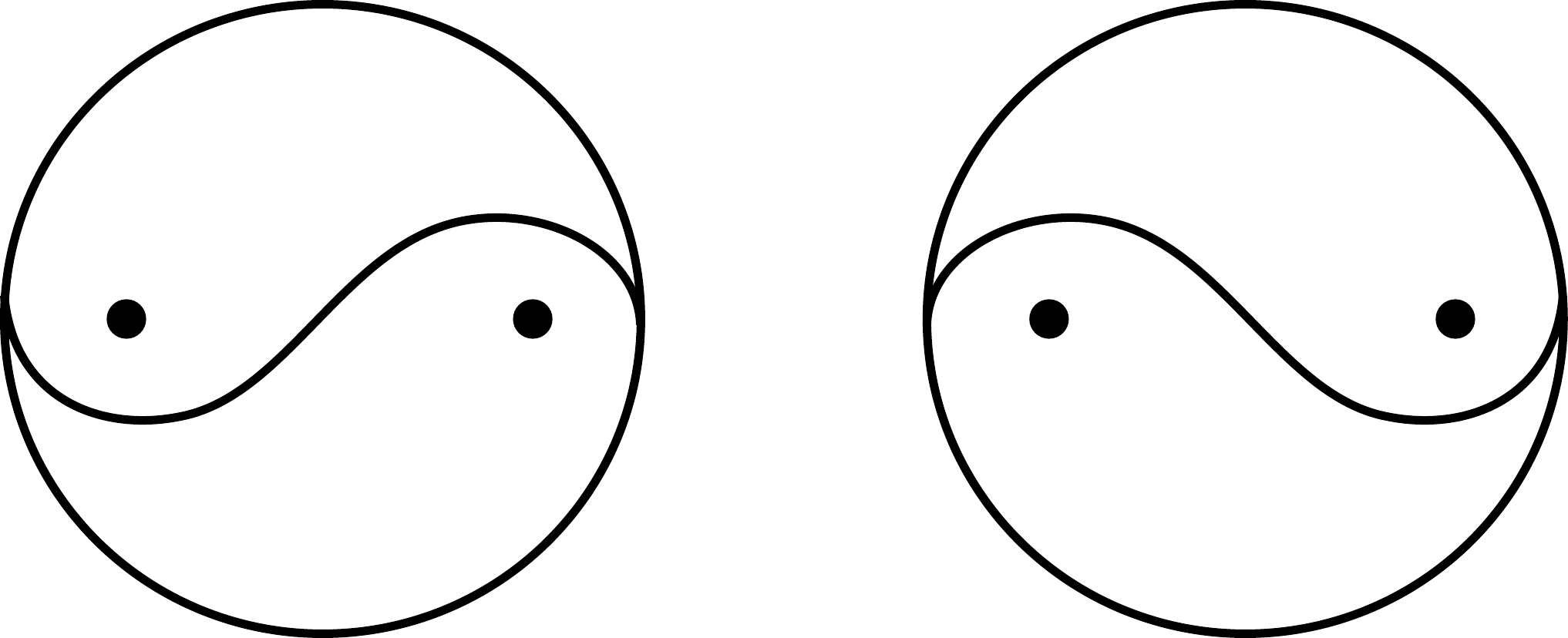}
    \caption{At left, a left-handed tao diagram. At right, a right-handed tao diagram.}
    \label{fig:taonobands}
\end{figure}

Now, each $\ell_{i,j}$ cuts out a twice-punctured disk from $P_i.$ We will distinguish two types of arcs that separate the two punctures. 
If $\ell_{i,j}$ is directly below a positive twist region, then we draw a \textit{right-handed tao arc} separating the two punctures as shown on the right of Figure \ref{fig:taonobands}. 
In the case where $\ell_{i,j}$ is directly below a negative twist region, we instead draw a \textit{left-handed tao arc}.
The union of a left-handed tao arc (resp.\ right-handed tao arc) with $\ell_{i,j}$ will be called a \textit{left-handed tao diagram} (resp.\ \textit{right-handed tao diagram}).
An important aspect of these tao diagrams is that at a tao arc's endpoints, the circle and the tao arc are tangent to each other as pictured.

\begin{figure}
    \centering
    \includegraphics[width=.7\textwidth]{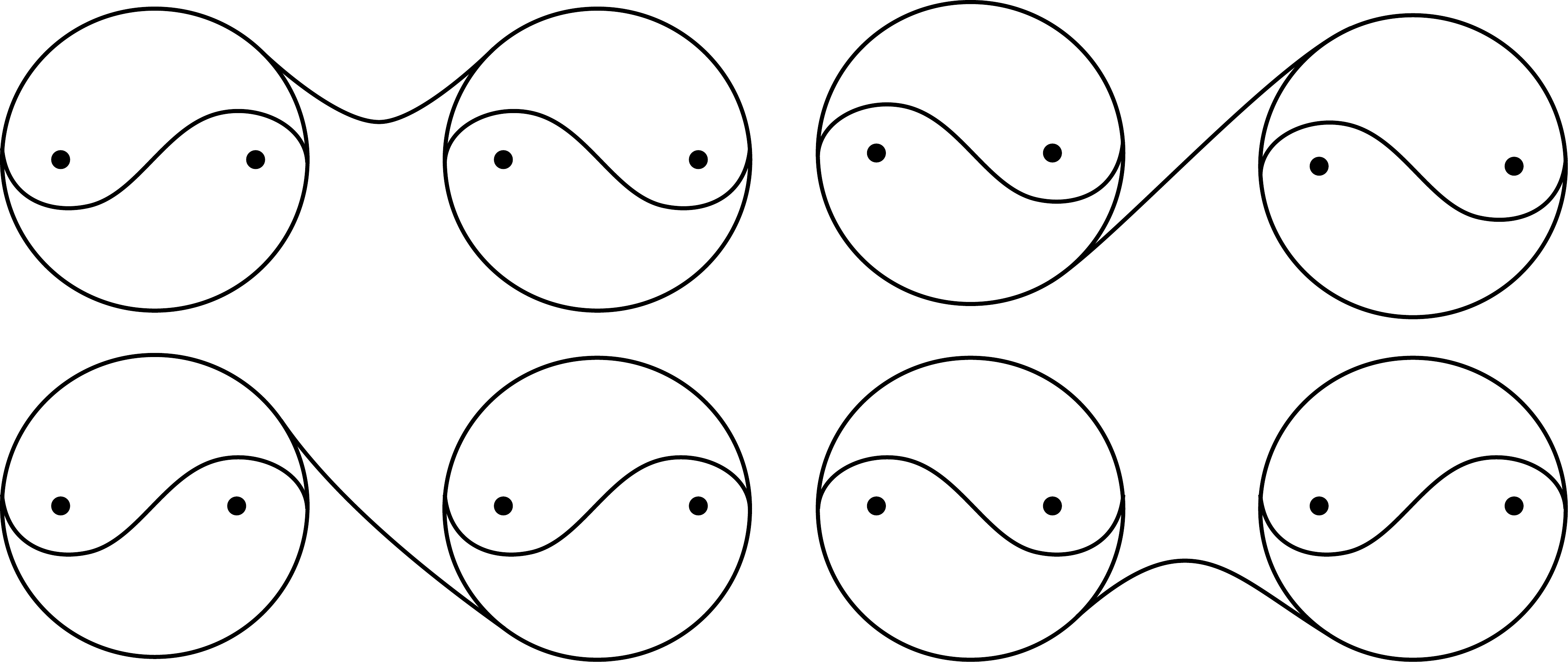}
    \caption{The way we add an edge between two adjacent tao diagrams depends on their handedness.}
    \label{fig:connecttao}
\end{figure}

At this point we have constructed various disconnected tao diagrams in each $P_i$. 
We next connect each pair of adjacent tao diagrams with an edge in one of the four ways pictured in Figure \ref{fig:connecttao}, depending on the handedness of each tao diagram.
If $i$ is even and $i\geq n_b+1,$ the result of this procedure is a train graph which we call $T_i$.

\begin{figure}
    \centering
    \includegraphics[width=.5\textwidth]{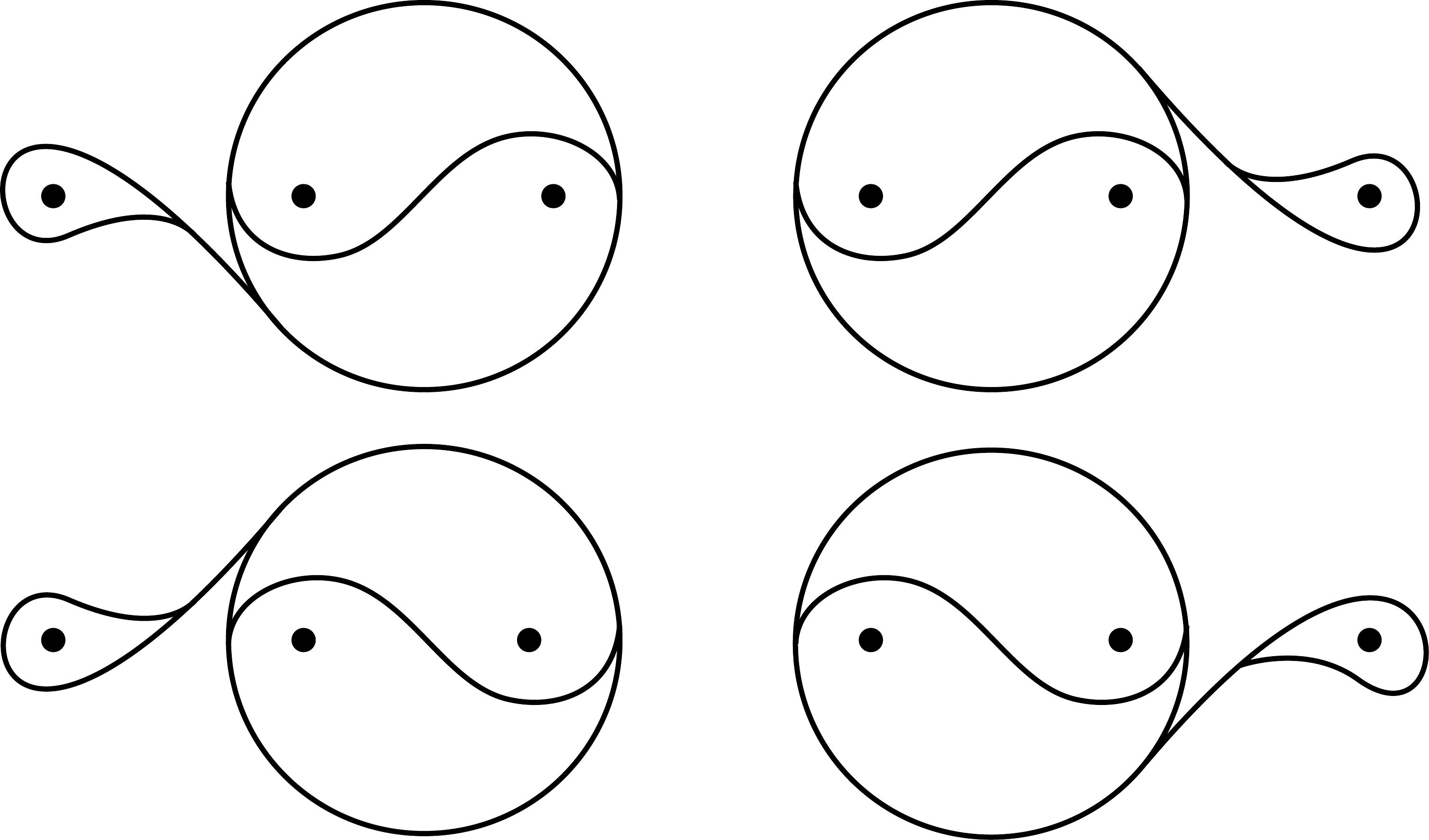}
    \caption{Adding eyelets to ``leftover" punctures that are adjacent to a tao diagram}
    \label{fig:eyelet1}
\end{figure}

In all other cases (i.e., if $i$ is odd and/or $i<n_b+1$), begin the construction of the train graph as above, combining tao diagrams and connecting edges; however, after doing so, there will be ``leftover'' punctures that are not surrounded by any tao diagrams. If any such puncture is adjacent to a puncture surrounded by a tao diagram, then we modify our graph according to Figure \ref{fig:eyelet1}, adding a vertex and two edges to the graph in a way that depends on which side of the tao diagram the puncture is on and the handedness of the tao diagram.
The newly added subgraph consists of two edges, one forming a loop around a puncture, and the other connecting the loop to a tao diagram. 
We refer to such a subgraph as an \textit{eyelet}.
If $i$ is odd and $i\geq n_b+1,$ this procedure gives a connected trivalent graph containing two eyelets, surrounding all the punctures. We call this train graph \(T_i\).

\begin{figure}[ht!]
\labellist
\tiny\hair 2pt
\pinlabel $P_{n-(2m-4)}$ at -180 1106
\pinlabel $P_{n-(2m-3)}$ at -180 874
\pinlabel $P_{n-(2m-2)}$ at -180 640
\pinlabel $P_{n-(2m-1)}$ at -180 406
\endlabellist

    \centering
    \includegraphics[width=.7\textwidth]{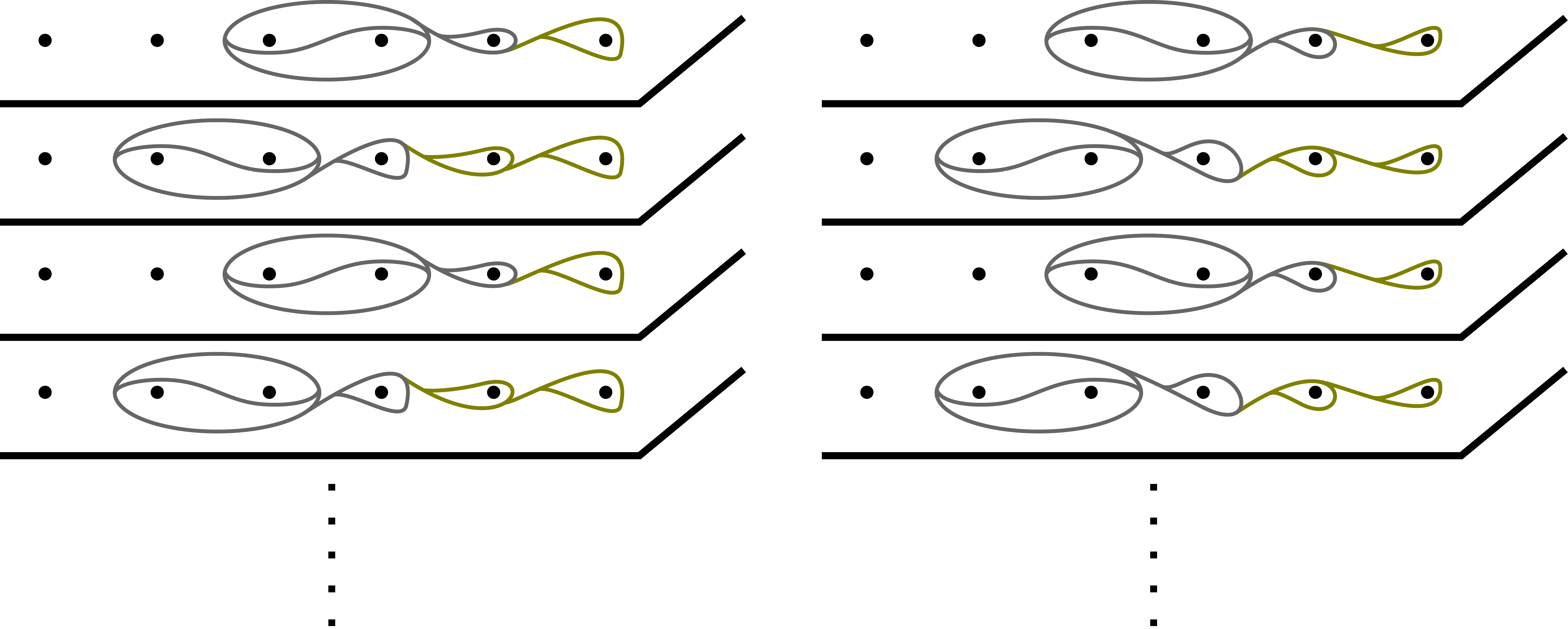}
    \caption{Adding eyelets to ``leftover'' punctures on $P_i$ for $1\leq i <  n_b+1$. If the rightmost tao in $P_{n-(2m-4)}$ is left-handed (resp. right-handed), we add eyelets according to the left (resp. right) picture.}
    \label{fig:eyelet2}
\end{figure}

For $1\leq i <  n_b+1,$ there are still ``leftover'' punctures that are not surrounded by a tao diagram or eyelet. Since the sign of the rightmost nonzero twist region of a row is opposite to the sign of the rightmost nonzero twist region of adjacent rows, there are only two possibilities for the rightmost tao and the eyelet adjacent to it. These are depicted in gray in Figure \ref{fig:eyelet2}. If $a_{n-1,m-1} < 0$, we add the eyelets according to the left of Figure \ref{fig:eyelet2}. If $a_{n-1,m-1} > 0$, we add the eyelets according to the right of Figure \ref{fig:eyelet2}. The newly added eyelets are colored gold. After doing so, we have a train graph that surrounds all the punctures for each $P_i$ for $1\leq i \leq n-(2m-4)$, and we call this train graph \(T_i\).

We have constructed a train graph \(T_i\) on each bridge sphere \(P_i\).
Now we will use each train graph \(T_i\) to construct a train track \(\tau_i\) on each sphere \(P_i\).
Let \(V_i\) and \(E_i\) be the vertex set and the edge set for \(T_i\), respectively.
For each vertex \(v\in V_i\), let \(\overline{N}(v)\) be a closed regular neighborhood of \(v\) in \(P_i\).

Let \(e'\) denote the connected component of \(T_i\backslash \bigcup_{v\in V_i}\overline{N}(v)\) corresponding to the edge \(e\).
Let \(\overline{N'}(e')\) be a closed regular neighborhood of \(e'\) in \(P_i\), and then define \(\overline{N}(e)=\overline{N'}(e')\backslash\bigcup_{v\in V_i}\overline{N}(v)\).
Notice that \(\left(\bigcup_{v\in V_i}\overline{N}(v)\right)\sqcup\left(\bigcup_{e\in E_i}\overline{N}(e)\right)\) is a regular neighborhood of \(T_i\) which we call \(\overline{N}(T_i)\), and the set \(\left\{\overline{N}(e), \overline{N}(v)\mid e\in E_i,v\in V_i\right\}\) is a partition for \(\overline{N}(T_i)\).

\begin{figure}
    \centering
    \includegraphics[width=.25\textwidth]{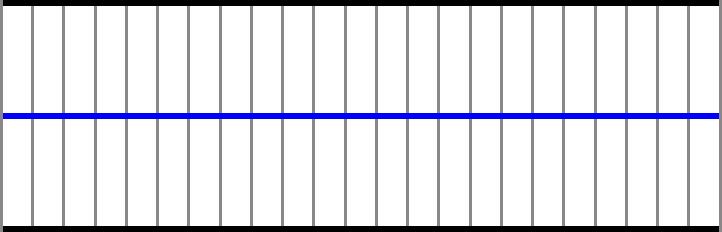}
    \caption{A fibration of $\overline{N}(e)$ by intervals.}
    \label{fig:200417_Nhood_of_e-prime}
\end{figure}

\begin{figure}
    \centering
    \includegraphics[width=.25\textwidth]{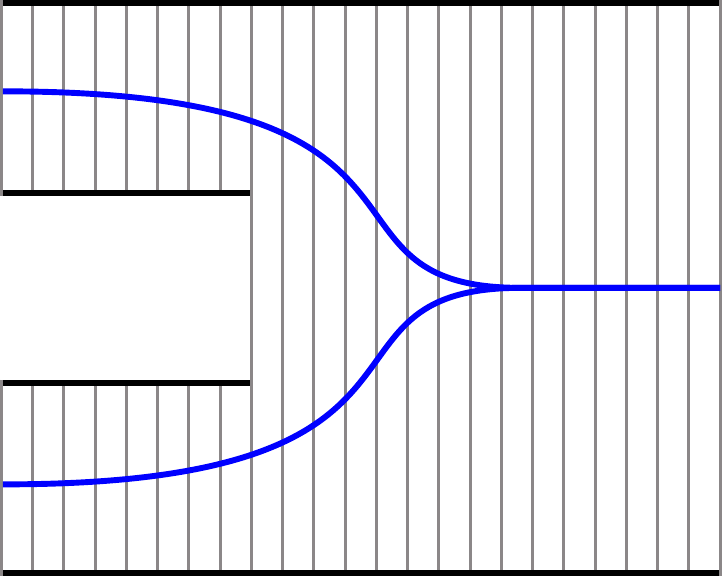}
    \caption{A singular fibration on \(\overline{N}(v)\) containing one singular fiber.}
    \label{fig:200417_Nhood_of_v-prime}
\end{figure}

\begin{figure}
    \centering
    \includegraphics[width=1\textwidth]{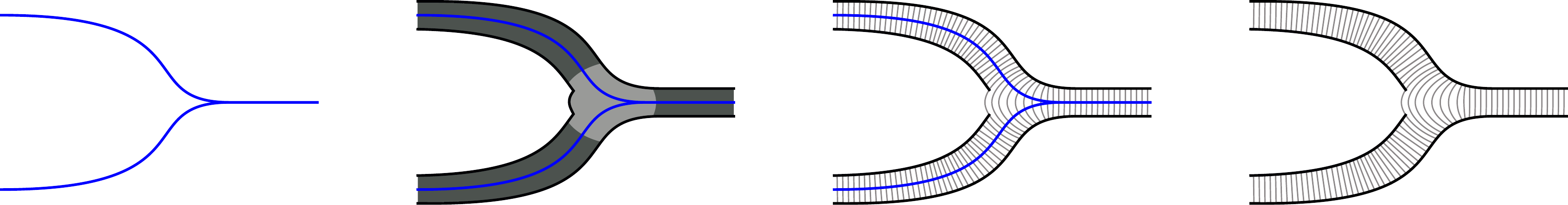}
    \caption{Constructing a train track \(\tau_i\) from the train graph \(T_i\).}
    \label{fig:200417_TrainTrackConstruction}
\end{figure}

We fiber each set \(\overline{N}(e)\) with interval fibers, each one intersecting \(e'\) transversely exactly once as in Figure \ref{fig:200417_Nhood_of_e-prime}.  
Then we impose a singular fibration on each \(\overline{N}(v)\) containing exactly one singular fiber, as in Figure \ref{fig:200417_Nhood_of_v-prime}.
This makes \(\overline{N}(v)\) into a neighborhood of a switch in a train track.
The surface \(\overline{N}(T_i)\), together with the singular fibration, is a train track which we call \(\tau_i\), constructed from the train graph \(T_i\).
This construction process is illustrated in Figure \ref{fig:200417_TrainTrackConstruction}.

\subsection{Carried and Almost Carried}\label{subsec:carried_almost_carried}

We want to isotope certain objects in the bridge sphere \(P_i\) to a position that behaves nicely with respect to the train track \(\tau_i\).

\begin{figure}
    \centering
    \includegraphics[width=.3\textwidth]{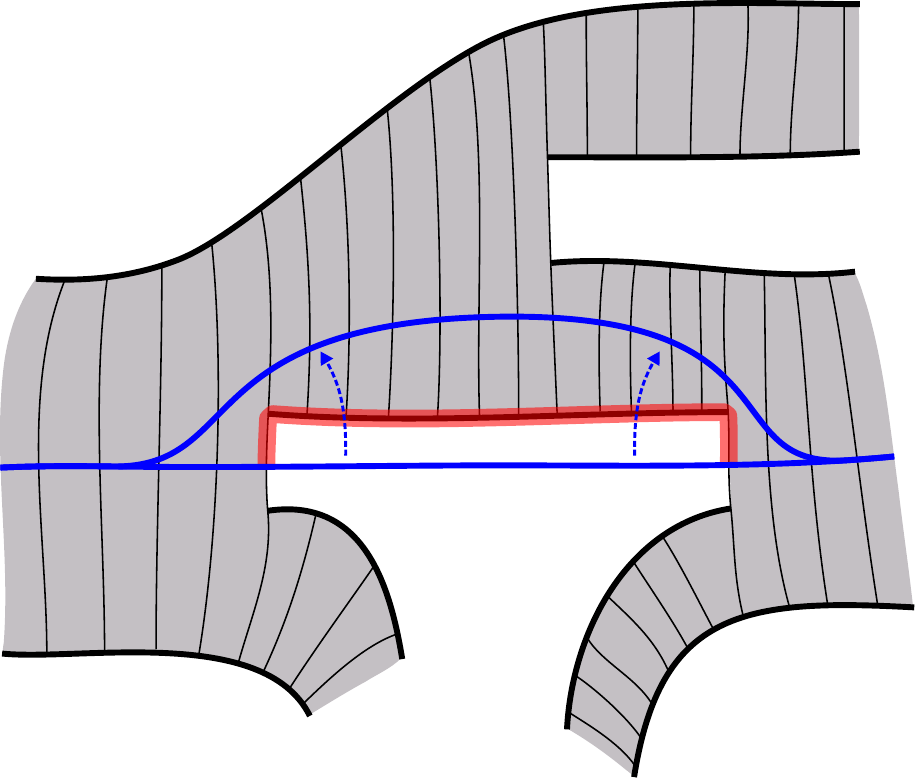}
    \caption{The red shaded arc is \(\alpha''\) from Definition \ref{defn:almostcarried_arc}.}
    \label{fig:200501_almost_carried}
\end{figure}

\begin{defn}\label{defn:almostcarried_arc}
For an arc $\alpha$ (not necessarily properly) embedded in $P_i$, \(\tau_i\) is said to \textit{almost carry} \(\alpha\) if the following are true.
\begin{enumerate}
\item\label{def:almost_carried_condition_1} For each point \(p\in\alpha\), either \(p\not\in\tau_i\) or \(p\) is a transverse intersection point of \(\alpha\) with an interval fiber of \(\tau_i\).
\item\label{def:almost_carried_condition_2} 

No point of \(\alpha\) is an endpoint of an interval fiber of \(\tau_i\).
\item\label{def:almost_carried_condition_3}

No connected component \(\alpha'\) of \(\alpha\backslash\interior{\tau_i}\)  is parallel (rel \(\partial\alpha'\)) into a switch.

\item\label{def:almost_carried_condition_4}
No connected component \(\alpha'\) of \(\alpha\backslash\interior{\tau_i}\)  is parallel (rel \(\partial\alpha'\)) into an arc \(\alpha''\subseteq\partial\tau_i\) with the property that \(\alpha''\) is partitioned into three subintervals: the outer two being subintervals of switches and the middle subinterval of \(\alpha''\) being an interval of fiber endpoints of \(\tau_i\).
(See Figure \ref{fig:200501_almost_carried}.)

\end{enumerate}
\end{defn}

\begin{rem}
It follows from Definition \ref{defn:almostcarried_arc} that if \(\tau_i\) almost carries an arc \(\alpha\), and an endpoint of \(\alpha\) lies in \(\partial\tau_i\), then that endpoint lies in the interior of a switch.
\end{rem}

\begin{rem}\label{rem:fulfill_conditions_1-2}
If an arc \(\alpha\) satisfies conditions \eqref{def:almost_carried_condition_1}, \eqref{def:almost_carried_condition_2}, and \eqref{def:almost_carried_condition_3} of Definition \ref{defn:almostcarried_arc}, then each arc of \(\alpha \cap (P_i\backslash\interior{\tau_i})\) which is properly embedded in \(P_i\backslash\interior{\tau_i}\) but which does not satisfy condition \eqref{def:almost_carried_condition_4} can be isotoped into the train track, as illustrated in Figure \ref{fig:200501_almost_carried}.
This results in a position of \(\alpha\) which \textit{is} now almost carried.
\end{rem}

\begin{defn}
A loop \(\ell\subseteq P_i\) is said to be \textit{almost carried} by the train track \(\tau_i\) if every connected component of \(\tau_i\cap\ell\) and every connected component of \(\left(P_i\backslash\interior{\tau_i}\right)\cap\ell\) is an arc which is almost carried by \(\tau_i\).

\end{defn}

\begin{rem}\label{rem:disjoint}
An arc or loop in \(P_i\) which is completely disjoint from \(\tau_i\) still satisfies the definition of being almost carried by \(\tau_i\).
\end{rem}

\begin{defn}\label{def:almost-carried-train-graph}
A train graph \(T\) is said to be \textit{almost carried} by the train track \(\tau_i\) if each edge of \(T\) is almost carried by \(\tau_i\).
\end{defn}

As a simple example, for each \(i\), the train graph \(T_i\) is almost carried by the train track \(\tau_i\).

\begin{defn}\label{def:covers}
Let \(T_i'\) be a subgraph of the train graph \(T_i\) (e.g., a tao), and let \(\tau_i'\subseteq\tau_i\) be the sub train track constructed from $T_i'$ following the instruction in Subsection \ref{subsec:Plat_train_tracks}.
If \(\ell\) is a loop or train graph, then \(\ell\) is said to \textit{cover} \(T_i'\) if \(\ell\) is almost carried by \(\tau_i'\) and \(\ell\) intersects every interval fiber of \(\tau_i'\).
\end{defn}

\begin{defn}\label{defn:almost_carry_train_track_diagrams}
Let \(\tau\) and \(\sigma\) be two different train tracks contained in the same bridge sphere \(P_i\).
Then \(\sigma\) is said to \textit{almost carry} \(\tau\) if for each interval fiber \(I\) of \(\tau\), \(I\) is disjoint from \(\sigma\) or \(I\) is contained in the interior of some interval fiber of \(\sigma\). 
\end{defn}

\begin{figure}
\centering
\includegraphics[width=.65\textwidth]{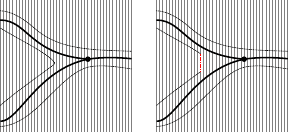}
\caption{The black graph is \(T\), the vertical gray lines are fibers of \(\sigma\), and \(\overline{N}(T)\), the closed regular neighborhood of  \(T\), is outlined with dashed lines.
On the right, we see that a slight isotopy of \(\overline{N}(T)\) makes it into a train track diagram \(\tau\) with fibers inherited from \(\sigma\).
That is, every fiber of \(\tau\) is a subinterval of a fiber of \(\sigma\).}
\label{fig:200713_reg_nhood_of_train_graph}
\end{figure}

\begin{prop}\label{prop:train-track-in-train-track}
Let \(T\) be a train graph in the bridge sphere \(P\), corresponding to train track diagram \(\tau\), and let \(\sigma\) be another train track diagram in \(P\).
If \(T\) is almost carried by \(\sigma\), then \(\tau\) is almost carried by \(\sigma\).
Furthermore, if \(p\) is a point in \(T\), and \(p\) lies in the interval fiber \(I\subseteq\tau\), and \(p\) lies in the interval fiber \(J\subseteq\sigma\), then \(I\subseteq J\).
\end{prop}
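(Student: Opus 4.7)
The plan is to exploit the flexibility in constructing the train track $\tau$ as a regular neighborhood of the train graph $T$: the width of $\overline{N}(T)$ and the choice of fibration can be adjusted without changing the underlying graph $T$. I would build $\tau$ so that at every point $p\in T$ lying in $\sigma$, the fiber of $\tau$ through $p$ is a subinterval of the fiber of $\sigma$ through $p$, while at every point $p\in T$ disjoint from $\sigma$, the fiber of $\tau$ through $p$ remains disjoint from $\sigma$. Both conclusions of the proposition will follow immediately from such a construction.

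The first step is a pointwise analysis along $T$. At any non-vertex point $p\in T$, the hypothesis that $T$ is almost carried by $\sigma$ gives two cases: either $p\notin\sigma$, or $p$ is a transverse intersection point of $T$ with an interval fiber $J_p$ of $\sigma$. In the latter case, by Conditions \ref{def:almost_carried_condition_1} and \ref{def:almost_carried_condition_2} of Definition \ref{defn:almostcarried_arc}, $p$ lies in the interior of $J_p$ (endpoints of edges of $T$ lie in switches of $\tau$, but interior points of edges are never endpoints of $\sigma$-fibers). In the first case, $p$ admits an open neighborhood in $P$ disjoint from $\sigma$, and I would shrink the local fibration of $\overline{N}(T)$ near $p$ to fit inside this neighborhood. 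In the second case, I would choose the local fibration of $\overline{N}(T)$ so that the fiber through $p$ is a short subinterval of $J_p$ centered at $p$. At each vertex $v\in T$, the same dichotomy applies, and the added flexibility afforded by the singular fibration on $\overline{N}(v)$ still permits arranging that every fiber of $\overline{N}(v)$, singular or not, is either disjoint from $\sigma$ or a short subinterval of a fiber of $\sigma$ transverse to $v$.

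By compactness of $T$ and continuity of $\sigma$'s fibration, these local choices can be made uniformly, yielding a single thickness for $\overline{N}(T)$ with the desired property. With $\tau$ so constructed, every interval fiber $I$ of $\tau$ passes through a unique point $p$ of $T$ (a vertex, in the case of a singular fiber), and by construction either $I\cap\sigma=\emptyset$ or $I\subseteq J_p$. This simultaneously proves that $\tau$ is almost carried by $\sigma$ and yields the ``furthermore'' clause. The main obstacle is the local analysis around vertices of $T$, where $\tau$ carries a singular fibration: one must verify carefully that the singular fibration on $\overline{N}(v)$ can be deformed to fit inside $\sigma$'s fiber structure while remaining a valid train-track fibration, and that the uniform thickness chosen by compactness is compatible with this deformation at every vertex simultaneously.
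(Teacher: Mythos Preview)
Your proposal is correct and follows essentially the same approach as the paper: both exploit the freedom in constructing $\tau=\overline{N}(T)$ to let its fibration inside $\sigma$ be inherited from $\sigma$'s fibers, with a separate treatment at vertices. The paper is slightly more explicit at the vertex step you flag as the ``main obstacle'': it performs a small isotopy of $\overline{N}(T)$ near each vertex $v\in\sigma$ so that the boundary arc $\lambda$ of $\overline{N}(T)$ lying between the two parallel edges at $v$ becomes a subinterval of a $\sigma$-fiber, after which the inherited fibration automatically has the correct singular structure.
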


\begin{proof}
Our strategy here is to reexamine the construction of \(\tau\) and see that it has the desired properties.
Let \(T\) be a train graph in \(P\), almost carried by \(\sigma\), and let \(\overline{N}(T)\) be a closed regular neighborhood of \(T\).

By definition, every point of \(T\) is either disjoint from \(\sigma\) or lies in the interior of a fiber interval of \(\sigma\).
It follows that \(\overline{N}(T)\) is disjoint from the fiber endpoints of \(\sigma\).

Near each vertex \(v\) of \(T\) which lies in \(\sigma\), we perform a slight isotopy of \(\overline{N}(T)\) (pictured in Figure \ref{fig:200713_reg_nhood_of_train_graph}) as follows:
We locate an arc \(\lambda\) of \(\partial \overline{N}(T)\) located between the two edges of \(T\) which emanate from \(v\) in the same direction.
We isotope \(\overline{N}(T)\) so that \(\lambda\) is a subinterval of one of the fiber intervals of \(\sigma\).

Next we allow the portion of \(\overline{N}(T)\) which lies inside \(\sigma\) to inherit a fibering from \(\sigma\) this way:
If \(J\) is a fiber of \(\sigma\), then \(J \cap\overline{N}(T)\) is a (possibly empty) set of fibers of \(\overline{N}(T)\).

After extending this fibration to the rest of \(\overline{N}(T)\) which lies outside of \(\sigma\), \(\overline{N}(T)\), endowed with a fibration, is now a train track diagram \(\tau\) with the desired properties.
\end{proof}

\section{How Compressing Disks Meet Train Tracks}
It is desirable to isotope a simple closed curve to intersect a train track in a way over which we can have some control. 

For future convenience, we partition the compressing disks into two disjoint sets. Consider $\alpha_+^1$, the left most bridge arc above \(P_n\).
A vertical isotopy of $\alpha_+^1$ into bridge sphere $P_n$ traces out a bridge disk $D^1_+$. Let $B=dD^1_+$; that is, \(B\) is the cap which is the frontier of a regular neighborhood of $D^1_+$ in $V_+$.
Similarly, consider $\alpha_-^{m}$, the rightmost bridge arc below \(P_1\).
The bridge arc \(\alpha_-^m\) gives rise to a bridge disk $D^{m}_-$ and a corresponding caps $B'=dD^m_-$. We will refer to these two isotopy classes of caps as \textit{blue} disks. Compressing disks for $P_i$ that are not blue will be referred to as \textit{red} disks.

\begin{figure}
\centering
\labellist
\tiny\hair 2pt
\pinlabel  {$\gamma$}  at 48 395
\pinlabel  {$r$}  at 248 276
\pinlabel  {$r$}  at 460 276
\pinlabel  {$r$}  at 670 276
\pinlabel  {$r$}  at 248 200
\pinlabel  {$r$}  at 460 200
\pinlabel  {$r$}  at 670 200
\pinlabel  {\fontsize{5}{6}\selectfont{$(k-1)\cdot r$}}  at 195 253
\pinlabel  {\fontsize{5}{6}\selectfont{$k\cdot r$}}  at 282 253
\pinlabel  {\fontsize{5}{6}\selectfont{$k\cdot r$}}  at 199 140
\pinlabel  {\fontsize{5}{6}\selectfont{$(k-1)\cdot r$}}  at 280 133
\pinlabel  {$r$}  at 248 116
\pinlabel  {$r$}  at 460 116
\pinlabel  {$r$}  at 655 116
\pinlabel  {$r$}  at 40 256
\pinlabel 
{\fontsize{5}{6}\selectfont{$(k-2)\cdot r$}}  at 435 236
\pinlabel  {\fontsize{5}{6}\selectfont{$(k-1)\cdot r$}}  at 481 223
\pinlabel  {\fontsize{5}{6}\selectfont{$(k-1)\cdot r$}}  at 422 172
\pinlabel  {\fontsize{5}{6}\selectfont{$(k-2)\cdot r$}}  at 470 153
\pinlabel 
{\fontsize{5}{6}\selectfont{$(k-3)\cdot r$}}  at 646 236
\pinlabel  {\fontsize{5}{6}\selectfont{$(k-1)\cdot r$}}  at 692 223
\pinlabel  {\fontsize{5}{6}\selectfont{$(k-2)\cdot r$}}  at 665 147
\pinlabel  {\fontsize{5}{6}\selectfont{$r$}}  at 290 27
\pinlabel  {\fontsize{5}{6}\selectfont{$r$}}  at 400 5
\pinlabel  {\fontsize{5}{6}\selectfont{$s$}}  at 277 73
\pinlabel  {\fontsize{5}{6}\selectfont{$t$}}  at 312 73
\pinlabel  {\fontsize{5}{6}\selectfont{$s$}}  at 372 86
\pinlabel  {\fontsize{5}{6}\selectfont{$t$}}  at 429 86
\endlabellist
\includegraphics[width=1\textwidth]{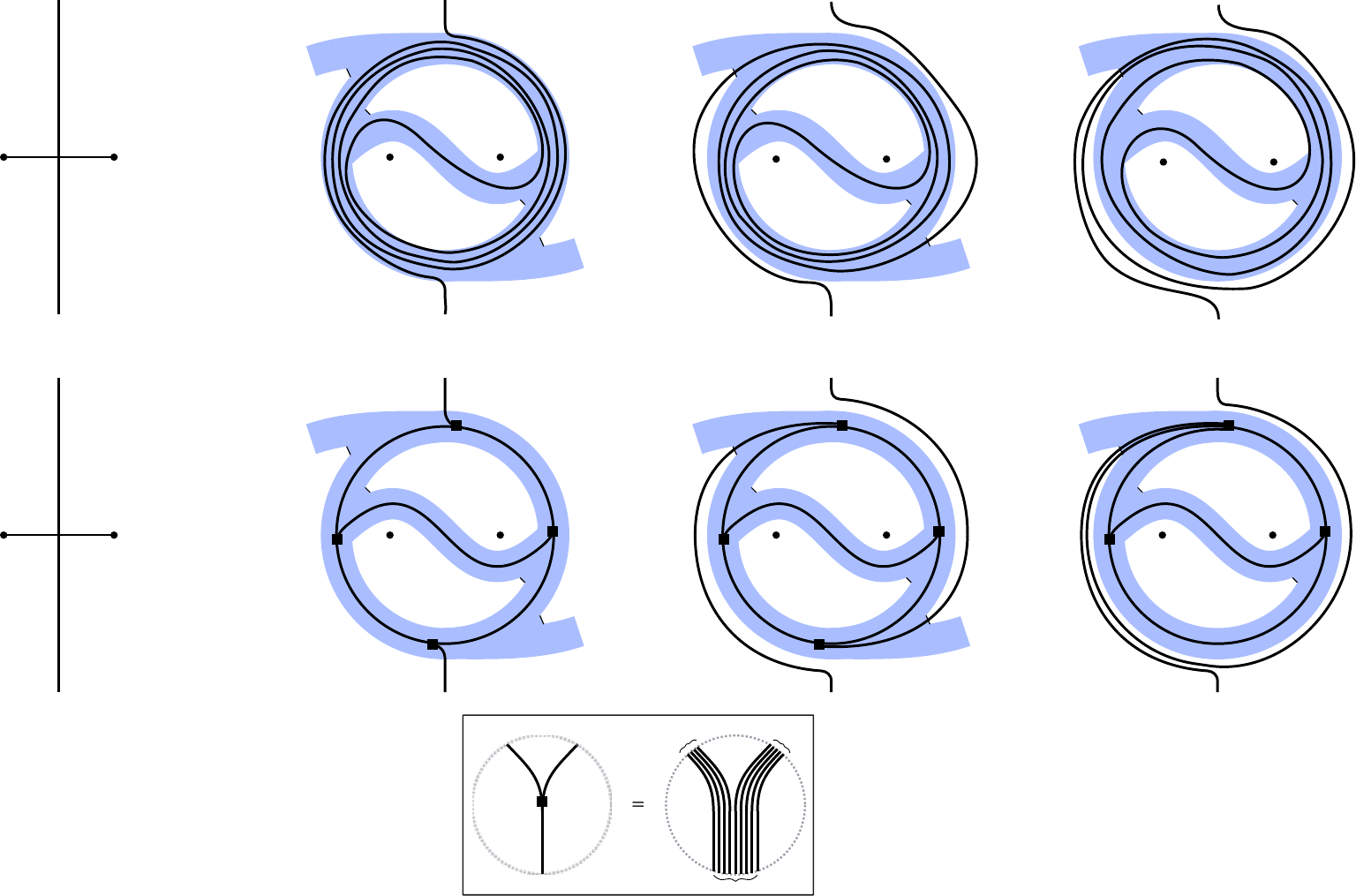}    
\caption{Illustration of Proposition \ref{prop:cover_a_tao}.
In this and following figures, an arc with a label (such as the arc coming out of the bottom of the second picture in the bottom row, marked with an \(r\)), represents not just one, but a number of parallel arcs, according to the label.
The box at the bottom of the figure illustrates that the small squares placed at various places in this and following figures represents a set of parallel arcs separating into two different sets of parallel arcs.
In particular, the square does \textit{not} represent a vertex of a graph.}
\label{fig:200615TwistingStrands}
\end{figure}

The following is Lemma 8.4 of \cite{johnson2016bridge}.

\begin{prop}\label{prop:cover_a_tao}
If \(D\) is a compressing disk above \(P_n\), then \(\pi_{n-1}\left(\partial D\right)\) covers at least one tao of \(\tau_{n-1}\), and away from those one or more taos, \(\pi_{n-1}\left(\partial D\right)\) intersects \(\tau_{n-1}\) in almost carried arcs or in fiber intervals.
\end{prop}

\begin{defn}

A subgraph \(T_i'\) of \(T_i\) is called a \textit{mini-graph} of \(T_i\) if it has the following properties.

\begin{enumerate}
\item \(T_i'\) is a union of taos, connecting arcs, and eyelets of the train graph \(T_i\).
\item Two adjacent taos of \(T_i\) are contained in \(T_i'\) if the taos' connecting arc is contained in \(T_i'\).
\item An eyelet \(E\subseteq T_i\) is contained in \(T_i'\) only if both the tao \(T\) nearest to \(E\) in \(T_i\) and every other eyelet between \(E\) and \(T\) are also contained in \(T_i'\).
\end{enumerate}
\end{defn}

\begin{defn}\label{defn:below}
Let \(T_i'\) be a mini-graph of \(T_i\). 
The mini-graph \textit{directly below} \(T_i'\) is defined to be the unique mini-graph \(T_{i-1}'\) of \(T_{i-1}\) with the following properties:
\begin{enumerate}
\item If \(T\subseteq T_i'\) is a tao or an eyelet and \(\sigma_{i-1}(T)\) intersects a tao \(T'\) of \(T_{i-1}\), then \(T'\subseteq T_{i-1}'\).
\item If \(T\subseteq T_i'\) is a tao and \(\sigma_{i-1}(T)\) intersects two taos of \(T_{i-1}\), say \(T'\) and \(T''\), then the connecting arc between \(T'\) and \(T''\) is contained in \(T_{i-1}'\).
\item If \(T\subseteq T_i'\) is a tao and \(\sigma_{i-1}(T)\) intersects an eyelet \(E'\) of \(T_{i-1}\), then \(E'\subseteq T_{i-1}'\).
\item If \(T\subseteq T_i'\) is an eyelet and  \(\sigma_{i-1}(T)\) is an eyelet of \(T_{i-1}\), then \(\sigma_{i-1}(T)\) is also an eyelet of \(T_{i-1}'\).
\end{enumerate}

Let \(i,i-j\in\{1,2,\hdots,n-1\}\), with \(i>i-j\).
Let \(T_i'\) and \(T_{i-j}'\) be mini-graphs of \(T_i\) and \(T_{i-j}\), respectively.
We say that \(T_{i-j}'\) is \textit{below} \(T_i'\) if and only if there exists a sequence of mini-graphs \(T_{i}', T_{i-1}', T_{i-2}',\hdots,  T_{i-j}'\) such that for \(k=1,2,\hdots,j\), the mini-graph \(T_{i-k-1}'\) is directly below \(T_{i-k}\).
Naturally, we will say that a mini-graph \(T_i'\) lies \textit{(directly) above} a mini-graph \(T_{i-j}'\) if and only if \(T_{i-1}'\) lies (directly) below \(T_i'\).
\end{defn}

\begin{obs}\label{obs:mini-graph_union}
If \(T_i'\) and \(T_i''\) are mini-graphs of \(T_i\), then \(T_i'\cup T_i''\) is a mini-graph of \(T_i\) as well.
\end{obs}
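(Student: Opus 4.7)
The plan is to verify directly that $T_i' \cup T_i''$ satisfies each of the three defining conditions of a mini-graph. The structure of each condition is of the form ``if a particular substructure $X$ is in the mini-graph, then some associated substructure $Y$ must also be in the mini-graph,'' and such implication-style closure conditions are automatically preserved under unions: anything in $T_i' \cup T_i''$ lies in one of $T_i'$ or $T_i''$, so one can apply the corresponding condition there and obtain the required substructure inside that mini-graph, which is contained in the union.

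First I would dispatch condition (1). Since each of $T_i'$ and $T_i''$ is, by hypothesis, a union of taos, connecting arcs, and eyelets of $T_i$, so is $T_i' \cup T_i''$. Next, for condition (2), suppose the connecting arc between two adjacent taos of $T_i$ is contained in $T_i' \cup T_i''$. Then this arc lies in at least one of the two mini-graphs; condition (2) applied to that mini-graph places both adjacent taos inside it, and hence inside the union. Finally, for condition (3), if an eyelet $E \subseteq T_i$ lies in $T_i' \cup T_i''$, then $E$ lies in $T_i'$ or $T_i''$, and whichever one contains $E$ must, by condition (3), also contain the nearest tao $T$ of $T_i$ to $E$ together with all intermediate eyelets between $E$ and $T$; again these then lie in the union.

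There is really no significant obstacle here. The only point that might merit a sentence in the written proof is that ``the nearest tao'' and ``every other eyelet between $E$ and $T$'' in condition (3) are notions defined relative to the ambient train graph $T_i$, not relative to the mini-graph under discussion, so the list of substructures that must be present in $T_i'$ (respectively $T_i''$) once $E$ is present is the same list that must be present in the union. Given how short the argument is, I would expect the proof to consist of a single paragraph that simply checks the three conditions in order.
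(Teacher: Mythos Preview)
Your argument is correct. The paper states this result as an observation without proof, so there is no approach to compare; your direct verification of the three defining conditions is exactly the natural justification, and your remark that the building blocks (taos, connecting arcs, eyelets) appear whole in each mini-graph is what makes the ``lies in one of the two'' step valid.
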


\begin{obs}\label{obs:mini-graph_union_below}
Suppose that \(T_i'\) and \(T_i''\) are mini-graphs of \(T_i\), that \(T_{i-1}'\) and \(T_{i-1}''\) are mini-graphs of \(T_{i-1}\), that \(T_{i-1}'\) is below \(T_i'\), and that \(T_{i-1}''\) is below \(T_i''\).
Then \(T_{i-1}'\cup T_{i-1}''\) is the mini-graph below \(T_i'\cup T_i''\).
\end{obs}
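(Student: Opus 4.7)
The plan is to verify that $T_{i-1}'\cup T_{i-1}''$ satisfies the defining conditions of the mini-graph directly below $T_i'\cup T_i''$ (using the definition of ``directly below'' since the two levels differ by one), and then appeal to the uniqueness clause built into Definition \ref{defn:below}. By Observation \ref{obs:mini-graph_union}, both $T_i'\cup T_i''$ and $T_{i-1}'\cup T_{i-1}''$ are indeed mini-graphs of their respective train graphs, so this is at least a sensible candidate.

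First I would check each of the four conditions of Definition \ref{defn:below}. The point is that every condition there is of the form ``if a tao or eyelet $T$ is contained in the upper mini-graph and its image has a certain property, then a specified feature lies in the lower mini-graph.'' Since each individual tao or eyelet $T\subseteq T_i'\cup T_i''$ must be contained in $T_i'$ or in $T_i''$, I can apply the hypothesis that $T_{i-1}'$ is directly below $T_i'$ (or that $T_{i-1}''$ is directly below $T_i''$) to conclude that the required tao, connecting arc, or eyelet lies in $T_{i-1}'$ or in $T_{i-1}''$, and hence in their union. This step is purely formal and amounts to observing that the conditions in Definition \ref{defn:below} distribute over unions of mini-graphs.

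Next I would establish uniqueness, which is the only substantive point. Let $T_{i-1}^{\ast}$ denote the mini-graph directly below $T_i'\cup T_i''$. Any feature forced into $T_{i-1}^{\ast}$ by $T_i'$ alone is also forced by $T_i'\cup T_i''$, so since $T_{i-1}'$ is the \emph{minimal} mini-graph containing the features forced by $T_i'$, we get $T_{i-1}'\subseteq T_{i-1}^{\ast}$, and analogously $T_{i-1}''\subseteq T_{i-1}^{\ast}$, giving the inclusion $T_{i-1}'\cup T_{i-1}''\subseteq T_{i-1}^{\ast}$. The reverse inclusion follows because $T_{i-1}'\cup T_{i-1}''$ already satisfies all the required conditions by the first step, and uniqueness of $T_{i-1}^{\ast}$ then forces equality.

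I do not anticipate a genuine obstacle here; the observation is essentially a bookkeeping statement about how the directly-below relation interacts with unions, and the only subtlety is ensuring that the minimality implicit in the word ``unique'' in Definition \ref{defn:below} is being invoked correctly. If anything, the slightly delicate point is checking condition (4), involving eyelets whose $\sigma_{i-1}$-image is itself an eyelet: here one must be sure that when such an eyelet $T\subseteq T_i'\cup T_i''$ lies in, say, $T_i'$, then the eyelet $\sigma_{i-1}(T)$ indeed appears in $T_{i-1}'$ (guaranteed by the hypothesis that $T_{i-1}'$ is directly below $T_i'$) and therefore in $T_{i-1}'\cup T_{i-1}''$.
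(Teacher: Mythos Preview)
Your argument is correct and is the natural way to verify the statement. The paper itself offers no proof at all: this result is stated as an Observation and left unargued, presumably because the authors regard it as immediate from Definition~\ref{defn:below} once one notices that each of the four forcing conditions there depends only on an individual tao or eyelet of the upper mini-graph. Your write-up makes this explicit, and your remark that the word ``unique'' in Definition~\ref{defn:below} must be read as ``minimal satisfying the four conditions'' is exactly the right way to resolve the only potential ambiguity.
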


\begin{prop}\label{prop:what-below-what}
First, if \(T_{n-1}'\) is the leftmost tao of \(T_{n-1}\), and if \(T_1'\) is the mini-graph of \(T_1\) constructed by excluding from \(T_1\) only the rightmost two eyelets, then \(T_{n-1}'\) is above \(T_1'\).
Second, if \(T_{n-1}''\) is any other tao of \(T_{n-1}\) besides the leftmost tao, then \(T_{n-1}''\) is either above \(T_1\) itself or above some mini-graph \(T_1''\) of \(T_1\) constructed by excluding from \(T_1\) the leftmost eyelet and/or the rightmost eyelet.
\end{prop}
In reading through the following proof, the reader may find it helpful to use the example link in Figure \ref{fig:200320PlatExampe6-5} to help locate and visualize the various mini-graphs we discuss.
\begin{proof}
We will need to define a few more specific mini-graphs.
Define \(T_{n_b+1}'\) to be the mini-graph constructed from \(T_{n_b+1}\) by excluding from it the rightmost eyelet, tao, and connecting arc.
Define \(T_{n_b}'\) to be the mini-graph of \(T_{n_b}\) consisting of all of the taos and connecting arcs of \(T_{n_b}\) but excluding both eyelets.
Now the reader can observe that by virtue of the dimensions of the link \(L_a\), that is, since \(n_a=2m-4\), the tao \(T_{n-1}'\) is above the mini-graph \(T_{n_b+1}'\).
Further, observe that \(T_{n_b+1}'\) is directly above \(T_{n_b}'\), which is above \(T_1'\).
Therefore, \(T_{n-1}'\) is above \(T_1'\).

For the next part of the proof, suppose that \(T_{n-1}''\) is a tao of \(T_{n-1}\), but not the leftmost one.
Appealing again to the dimensions of \(L_a\), the tao \(T_{n-1}''\) is above some mini-graph \(T_{n_b+1}''\) of \(T_{n_b+1}\) which includes at least all of the taos of \(T_{n_b+1}\) but the leftmost one. 
That is, while \(T_{n_b+1}''\) \textit{may} contain the leftmost tao of \(T_{n_b+1}\),   \(T_{n_b+1}''\) \textit{does} contain all the other taos of \(T_{n_b+1}\).
It follows that \(T_{n_b+1}''\) is directly above some mini-graph \(T_{n_b}''\) of \(T_{n_b}\) which contains at least all but the leftmost tao of \(T_{n_b}\) and also the first of the two eyelets on the right side of \(T_{n_b}\).
Going down another level, \(T_{n_b}''\) must be directly above a mini-graph \(T_{n_b-1}''\) of \(T_{n_b-1}\) which contains all the taos and connecting arcs of \(T_{n_b-1}\) as well as the first and second (but not necessarily the third) eyelet on the right.

If \(n_b=2\), then  \(T_{n_b-1}=T_1\), and we can define \(T_1''=T_{n_b-1}''\), in which case the proof is finished, for the tao \(T_{n-1}''\) is above \(T_1''\), which has been shown to have the desired properties.

If \(n_b>2\), then observe that at each level from level \(n_b-1\) down to level 1, \(T_{n_b-1}''\) will be above a mini-graph consisting of all of the level's taos and connecting arcs as well as all of the level's eyelets, possibly excluding the leftmost eyelet and/or the rightmost eyelet.
Therefore \(T_{n_b-1}''\) is above some mini-graph \(T_1''\) with the desired properties, which finishes the proof.
\end{proof}

\begin{defn}
It will be helpful to name a few special types of mini-graphs.
\begin{enumerate}\setcounter{enumi}{-1}
\item If \(T\) is a tao, we will call \(T\) a \textit{Type-0} mini-graph.
\item A \textit{Type-1} mini-graph consists of a final eyelet of \(T_i\) and an adjacent tao.
\item A \textit{Type-2} mini-graph consists of a final eyelet \(E_2\), an eyelet \(E_1\) adjacent to \(E_2\), and a tao adjacent to \(E_1\).
\item A \textit{Type 3} mini-graph consists of a final eyelet \(E_3\), an eyelet \(E_2\) adjacent to \(E_3\), an eyelet \(E_1\) adjacent to \(E_2\), and a tao adjacent to \(E_1\).
\item Collectively we will refer to mini-graphs of Type 0, 1, 2, or 3 as \textit{typed} mini-graphs.
\end{enumerate}

\end{defn}

\begin{obs}\label{obs:decompose_mini-graph}
If \(T_i'\) is a mini-graph of \(T_i\), then for some positive integer \(k\),  \(T_i'\) can be decomposed into a union \(T_i'=t_1\cup t_2\cup\cdots\cup t_k\cup c\), where \(t_1,t_2,\hdots,t_k\) are typed mini-graphs and \(c\) is a (possibly empty) union of connecting arcs.
\end{obs}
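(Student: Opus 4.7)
The plan is to read the decomposition directly off the structural constraints imposed by the mini-graph definition. Let \(T_i'\) be a mini-graph. The first two conditions of the mini-graph definition show that \(T_i'\) consists of some taos of \(T_i\), together with a subset of the connecting arcs (each joining two taos that both lie in \(T_i'\)), together with a subset of the eyelets. The third condition further forces the eyelets of \(T_i'\) to form ``initial chains'': attached to each tao \(T\subseteq T_i'\), on each side where an eyelet chain exists in \(T_i\), sits a (possibly empty) initial segment of that ambient eyelet chain.

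The key ingredient I will next isolate is a uniform length bound for the eyelet chains of \(T_i\). From the explicit construction of \(T_i\) in Section \ref{subsec:Plat_train_tracks}, and in particular the prescriptions of Figures \ref{fig:eyelet1} and \ref{fig:eyelet2}, every eyelet chain appearing in any \(T_i\) has length at most three. This is the most delicate step and amounts to a finite combinatorial check against the rules governing where leftover punctures can occur and how many eyelets can be appended to any given tao; it is also the step that drives the choice to enumerate only Types 0, 1, 2, and 3 in the preceding definition.

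Granting this bound, the decomposition is then assembled one tao at a time. For each tao \(T\subseteq T_i'\), let \(C_L(T)\) and \(C_R(T)\) denote the chains of eyelets of \(T_i'\) attached to \(T\) on the left and right, respectively; either chain may be empty. Whenever \(C_L(T)\neq\emptyset\), set \(t_{T,L}:=T\cup C_L(T)\), which is a typed mini-graph of Type \(|C_L(T)|\), and define \(t_{T,R}\) analogously when \(C_R(T)\neq\emptyset\). If both chains are empty, set \(t_T:=T\), a Type-\(0\) mini-graph. Finally, let \(c\) be the union of all connecting arcs in \(T_i'\). Relabeling the resulting typed mini-graphs as \(t_1,\hdots,t_k\), we obtain \(T_i'=t_1\cup\cdots\cup t_k\cup c\), with the only caveat that two of the \(t_j\)'s may share a common tao (precisely when a single tao carries eyelet chains on both sides). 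This is compatible with the statement, which asks only for a union and not a disjoint union.
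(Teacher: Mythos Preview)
The paper states this as an observation without proof, so there is no paper argument to compare against directly; your attempt is a reasonable reconstruction of why the observation should hold.

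There is, however, a genuine gap in your final step. You correctly note that condition~(3) of the mini-graph definition forces the eyelets of \(T_i'\) attached to a given tao \(T\) to form an initial segment \(C_L(T)\) (respectively \(C_R(T)\)) of the ambient eyelet chain in \(T_i\). You then assert that \(t_{T,L}:=T\cup C_L(T)\) is a typed mini-graph of Type \(|C_L(T)|\). This does not follow from what you have established. Look again at the definitions: each Type-\(k\) mini-graph for \(k\geq 1\) is required to contain a \emph{final eyelet of \(T_i\)}, that is, the terminal eyelet of the ambient chain. If \(C_L(T)\) is a \emph{proper} initial segment---for instance, the single innermost eyelet of a chain of length two or three---then \(T\cup C_L(T)\) is not a typed mini-graph under the paper's definitions, and your decomposition does not produce typed pieces. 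You yourself flag that \(C_L(T)\) is only an ``initial segment,'' but then do not address why it must in fact be the full chain.

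To close the gap you need an additional argument that whenever a mini-graph contains any eyelet on a given side of a tao, it contains the entire ambient eyelet chain out to the final eyelet. Condition~(3) as written does not provide this (it only forces the chain to extend inward toward the tao). You would need either to extract this from the explicit construction of the \(T_i\) and the geometry of where eyelet chains can sit, or to restrict attention to the specific mini-graphs produced by the ``below'' relation of Definition~\ref{defn:below}, which are the only ones the paper actually uses and for which one can check inductively that eyelet chains are always terminal.
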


\begin{obs}\label{obs:below_decomposed_mini-graph}
Suppose \(T_i'\) is a mini-graph of \(T_i\), and \(T_{i-1}'\) is the mini-graph directly below \(T_i'\).
Let \(T_i'\) be decomposed into a union \(T_i'=t_1\cup t_2\cup\cdots\cup t_k\cup c\), where \(t_1,t_2,\hdots,t_k\) are typed mini-graphs and \(c\) is a union of connecting arcs.
For each \(j\in\{1,2,\hdots,k\}\), let \(u_j\) be the mini-graph directly below \(t_j\).
Then \(T_{i-1}'=u_1\cup u_2\cup\cdots\cup u_k\).
\end{obs}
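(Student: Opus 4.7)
The plan is to verify that $\bigcup_{j=1}^{k} u_j$ satisfies the properties characterizing the mini-graph directly below $T_i'$ in Definition \ref{defn:below}, and then invoke the uniqueness asserted there to conclude $T_{i-1}' = \bigcup_{j=1}^{k} u_j$. This reduces the observation to two structural facts about the setup.

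First, I would observe two structural points. By Observation \ref{obs:mini-graph_union} applied inductively, $\bigcup_{j=1}^{k} u_j$ is itself a mini-graph of $T_{i-1}$. Next, the four defining conditions of ``directly below'' in Definition \ref{defn:below} have hypotheses that reference only taos and eyelets of the upper mini-graph, never its connecting arcs. Consequently, the arc portion $c$ in the decomposition $T_i' = t_1 \cup \cdots \cup t_k \cup c$ contributes nothing to what is forced into the mini-graph directly below $T_i'$. Moreover, from the definitions of Type-0, 1, 2, and 3 mini-graphs (each containing a single tao together with zero or more adjacent eyelets) and from Observation \ref{obs:decompose_mini-graph}, every tao and every eyelet of $T_i'$ lies in exactly one typed piece $t_j$.

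With these facts in hand, I would verify conditions (1)--(4) of Definition \ref{defn:below} for the pair $(T_i', \bigcup_j u_j)$: given any tao or eyelet $T \subseteq T_i'$ triggering one of the four conditions, $T$ lies in a unique $t_j$, so the corresponding content (a tao, connecting arc, or eyelet in $T_{i-1}$) is forced into $u_j$ by the same condition applied to the pair $(t_j, u_j)$, and hence lies in $\bigcup_j u_j$. Thus $\bigcup_j u_j$ is a mini-graph of $T_{i-1}$ satisfying all four conditions with respect to $T_i'$, so by the uniqueness clause in Definition \ref{defn:below}, it must equal $T_{i-1}'$.

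I do not anticipate a serious obstacle; the observation is essentially bookkeeping built from two key points: (a) the ``below'' operation is controlled only by taos and eyelets, and (b) the typed decomposition partitions the taos and eyelets of $T_i'$ among the $t_j$'s. The only mild subtlety is trusting the uniqueness asserted in Definition \ref{defn:below}, which we take as given.
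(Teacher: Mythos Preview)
The paper states this as an observation without proof, so there is no argument on the paper's side to compare yours against. Your justification is correct and captures exactly why the observation is immediate: the four conditions in Definition~\ref{defn:below} are triggered only by taos and eyelets of the upper mini-graph (never by connecting arcs), and the typed decomposition from Observation~\ref{obs:decompose_mini-graph} partitions the taos and eyelets of \(T_i'\) among the \(t_j\), so what is forced into \(T_{i-1}'\) is precisely the union of what each \(t_j\) forces into its \(u_j\). Your appeal to Observation~\ref{obs:mini-graph_union} to confirm that \(\bigcup_j u_j\) is itself a mini-graph, together with the uniqueness clause in Definition~\ref{defn:below}, cleanly closes the argument; your flag about trusting that uniqueness is the right caveat, and it is indeed taken as given in the paper.
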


\begin{figure}[ht!]
\labellist
\tiny\hair 2pt
\pinlabel {\fontsize{4}{6}\selectfont $s-2$}  at 140 241
\pinlabel {\fontsize{4}{6}\selectfont $s-1$}  at 162 241
\pinlabel {\fontsize{4}{6}\selectfont $s-1$}  at 137 201
\pinlabel {\fontsize{4}{6}\selectfont $s-2$}  at 164 198

\pinlabel {\fontsize{4}{6}\selectfont $s-3$}  at 140 124
\pinlabel {\fontsize{4}{6}\selectfont $s-1$}  at 162 124

\pinlabel {\fontsize{4}{6}\selectfont $s-2$}  at 164 78

\pinlabel {\fontsize{4}{6}\selectfont $r-2$}  at 30 241
\pinlabel {\fontsize{4}{6}\selectfont $r-1$}  at 52 241
\pinlabel {\fontsize{4}{6}\selectfont $r-1$}  at 27 201
\pinlabel {\fontsize{4}{6}\selectfont $r-2$}  at 54 198

\endlabellist

\centering

\includegraphics[width=1\textwidth]{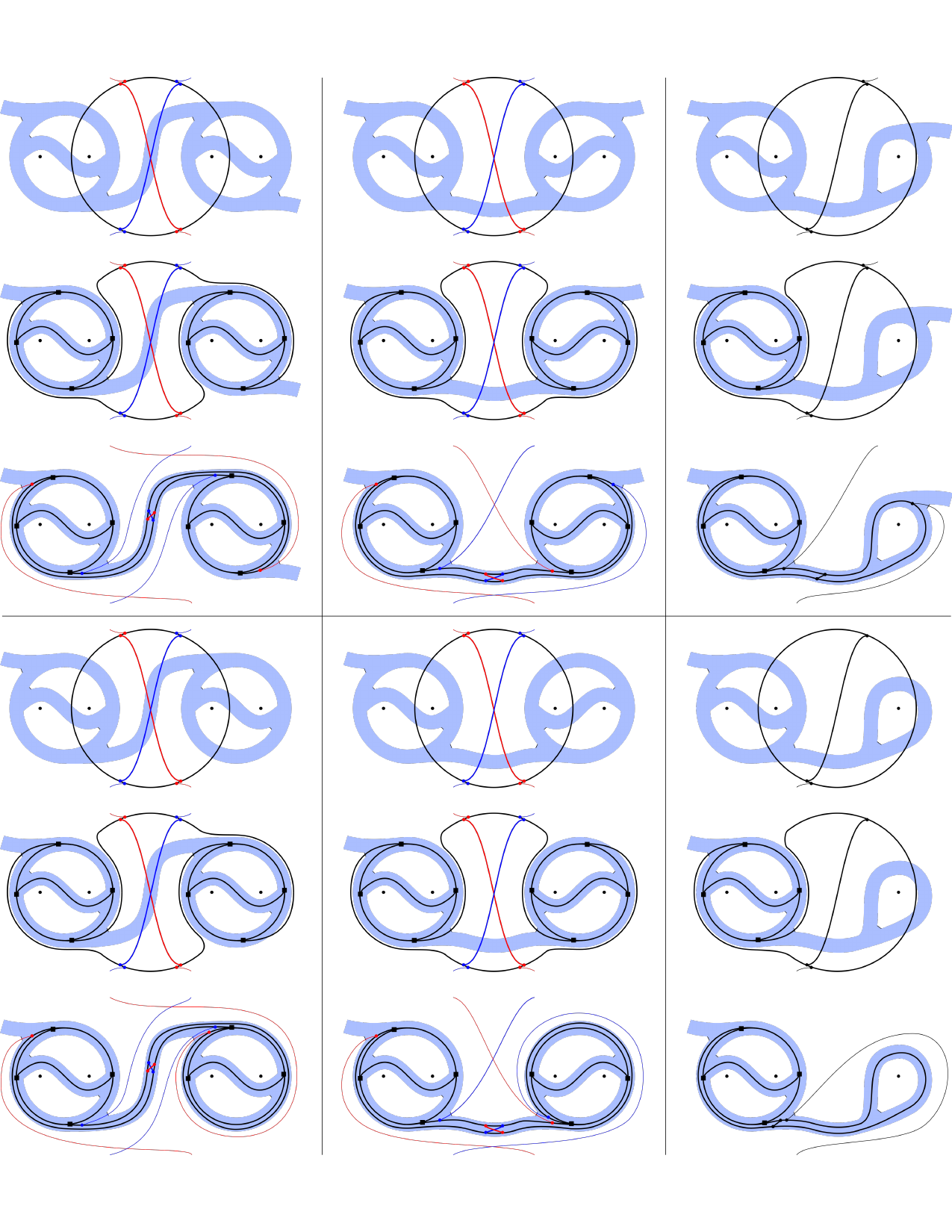}
\caption{A Type-0 mini-graph (i.e., a tao) covers the mini-graph directly below it. In each picture which includes both red and blue arcs, only either the blue arcs or the red arcs will be present, depending on the handedness of the tao.}
\label{fig:200811_Type_one_twisted_down}
\end{figure}

\begin{figure}[ht!]
\labellist
\small\hair 2pt
\pinlabel \fontsize{5}{7}\selectfont $r$ at 37 290
\pinlabel \fontsize{5}{7}\selectfont $r$ at 111 219
\pinlabel \fontsize{5}{7}\selectfont $2s$ at 222 220
\pinlabel \fontsize{5}{7}\selectfont $2(s-1)$ at 272 215
\pinlabel \fontsize{5}{7}\selectfont $2$ at 252 245
\pinlabel \fontsize{5}{7}\selectfont $r-1$ at 76 26
\pinlabel \fontsize{4}{6}\selectfont $2s-1$ [t] at 222 53
\pinlabel \fontsize{4}{6}\selectfont $2(s-1)$ [t] at 274 43
\pinlabel \fontsize{4}{6}\selectfont $2$ [t] at 260 70
\pinlabel \fontsize{5}{7}\selectfont $r-1$ at 97 297
\pinlabel \fontsize{5}{7}\selectfont $r-1$ at 47 219
\pinlabel \fontsize{5}{7}\selectfont $2(s-1)$  at 241 293
\pinlabel \fontsize{4}{6}\selectfont $2(s-1)-1$  at 243 113
\pinlabel \fontsize{4}{6}\selectfont $2s$  at 299 103
\pinlabel \fontsize{5}{7}\selectfont $2s$  at 289 293
\pinlabel \fontsize{5}{7}\selectfont $r$ at 50 118
\pinlabel \fontsize{5}{7}\selectfont $r-2$ at 93 118

\pinlabel \fontsize{5}{7}\selectfont $2(s-1)$ at 663 215
\pinlabel \fontsize{5}{7}\selectfont $2$ at 643 245
\pinlabel \fontsize{5}{7}\selectfont $2s$  at 680 293
\pinlabel \fontsize{5}{7}\selectfont $2(s-1)$  at 632 293
\pinlabel \fontsize{5}{7}\selectfont $2s$ at 613 220

\pinlabel \fontsize{4}{6}\selectfont $2(s-1)-1$  at 636 113
\pinlabel \fontsize{4}{6}\selectfont $2s$  at 690 103
\pinlabel \fontsize{4}{6}\selectfont $2s-1$ [t] at 614 53
\pinlabel \fontsize{4}{6}\selectfont $2(s-1)$ [t] at 665 43
\pinlabel \fontsize{4}{6}\selectfont $2$ [t] at 651 70

\pinlabel \fontsize{5}{7}\selectfont $r-1$ at 437 290
\pinlabel \fontsize{5}{7}\selectfont $r$ at 507 290
\pinlabel \fontsize{5}{7}\selectfont $r$ at 427 219
\pinlabel \fontsize{5}{7}\selectfont $r-1$ at 496 219

\pinlabel \fontsize{5}{7}\selectfont $r-2$ at 489 34
\pinlabel \fontsize{5}{7}\selectfont $r$ at 430 34
\pinlabel \fontsize{5}{7}\selectfont $r-1$ at 456 124
\endlabellist
\centering
\includegraphics[width=0.8\textwidth]{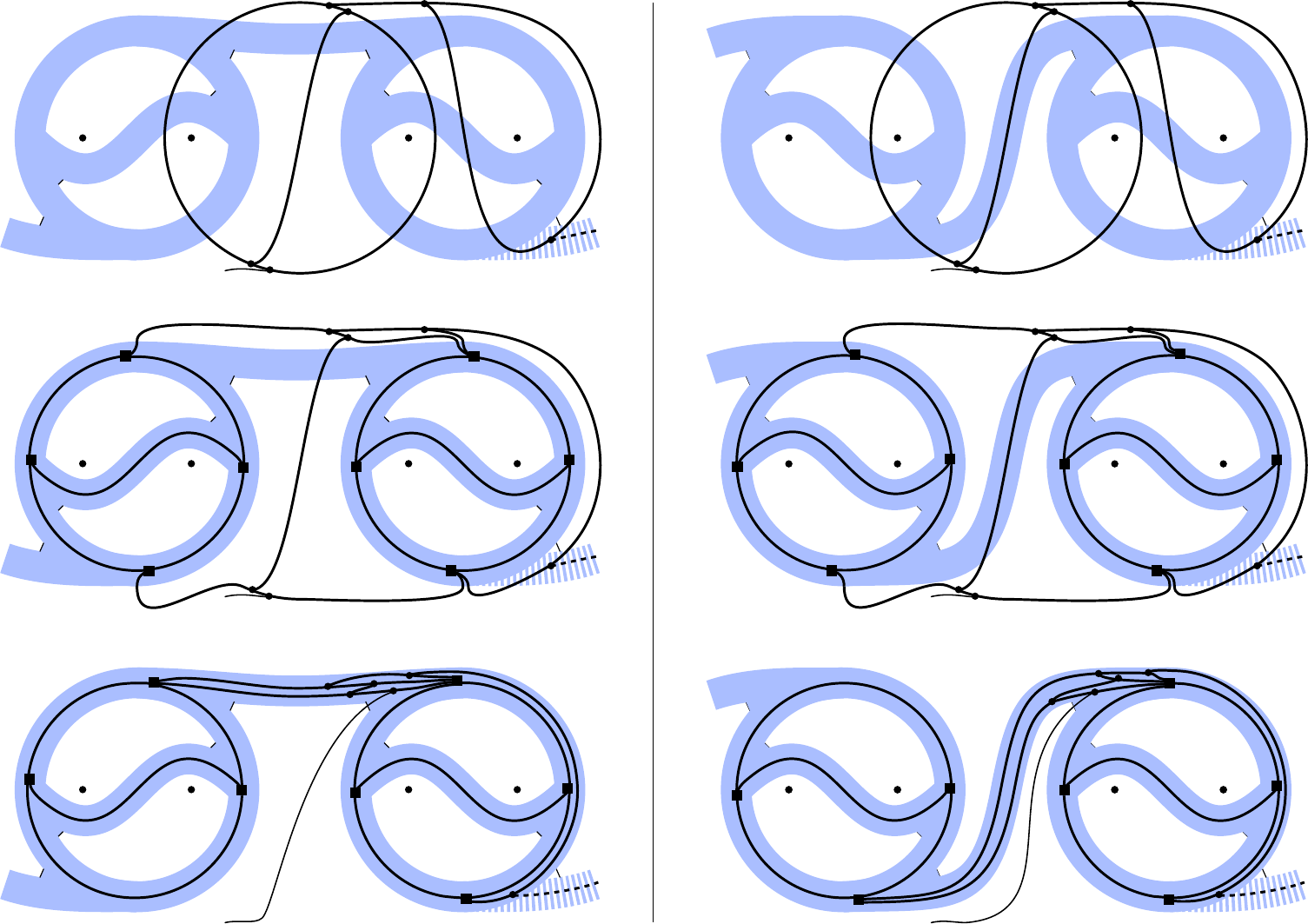}    
\caption{A Type-1 mini-graph may lie directly above two taos and their connecting arc.
If so, the Type-1 mini-graph will cover the two taos and their connecting arc.
The dashed lines of the train track diagram and of the train graph are either both present or both absent.}
\label{fig:200720_a_tao_and_one_eyelet_over_two_taos}
\end{figure}

%  One tao and one eyelet over one tao and two eyelets
\begin{figure}[ht!]
\labellist
\small\hair 2pt
\pinlabel \fontsize{4}{6}\selectfont $r-1$ at 47 294
\pinlabel \fontsize{4}{6}\selectfont $r-1$  at 90 120
\pinlabel \fontsize{4}{6}\selectfont $r$  at 45 30
\pinlabel \fontsize{4}{6}\selectfont $r-2$  at 97 30
\pinlabel \fontsize{4}{6}\selectfont $r$  at 125 272
\pinlabel \fontsize{4}{6}\selectfont $r$ at 43 212
\pinlabel \fontsize{4}{6}\selectfont $r-1$ at 100 212
\endlabellist

\centering
\includegraphics[width=.4\textwidth]{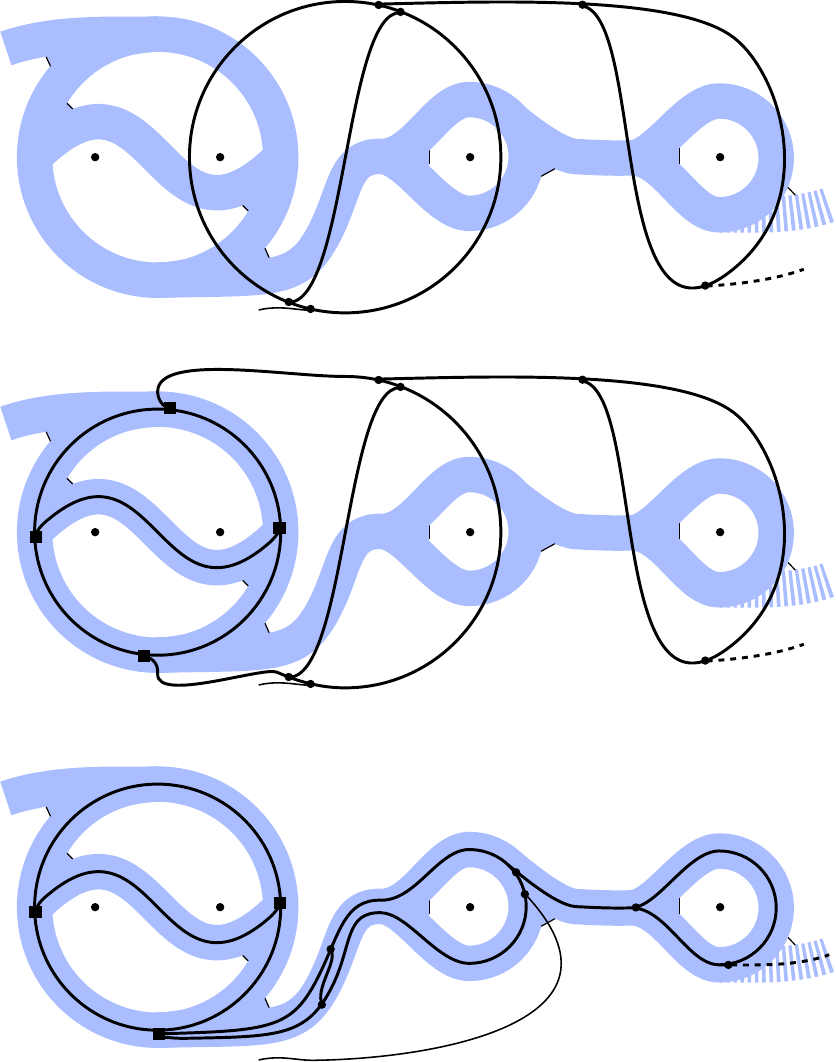}
\caption{A Type-1 mini-graph may lie directly above a Type-2 mini-graph, in which case the former will cover the latter.}
\label{fig: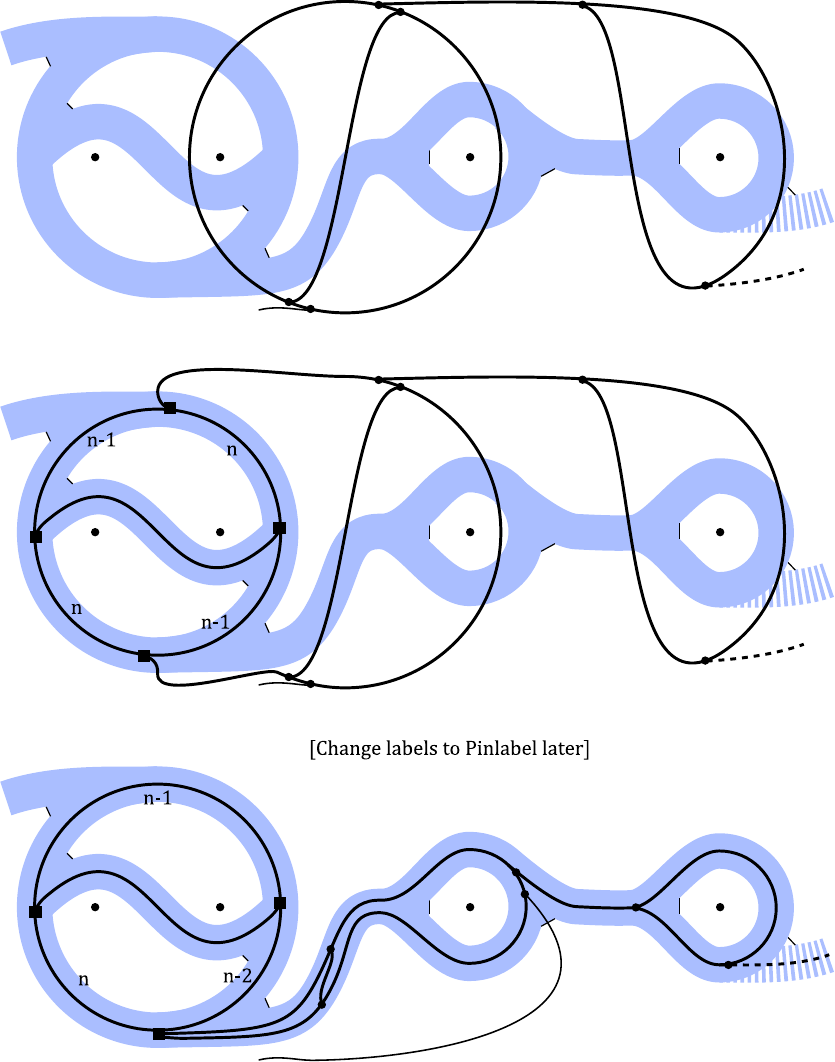}
\end{figure}

\begin{figure}[ht!]
\labellist
\small\hair 2pt
\pinlabel {\fontsize{5}{7}\selectfont $r$} at 37 280
\pinlabel {\fontsize{5}{7}\selectfont $r-1$} at 100 282
\pinlabel {\fontsize{5}{7}\selectfont $r-1$} at 54 207
\pinlabel {\fontsize{5}{7}\selectfont $r-1$} at 76 21
\pinlabel {\fontsize{5}{7}\selectfont $r$} at 37 99
\pinlabel {\fontsize{5}{7}\selectfont $r$} at 107 208
\pinlabel {\fontsize{5}{7}\selectfont $r-2$} at 103 103
\endlabellist
\centering
\includegraphics[width=.5\textwidth]{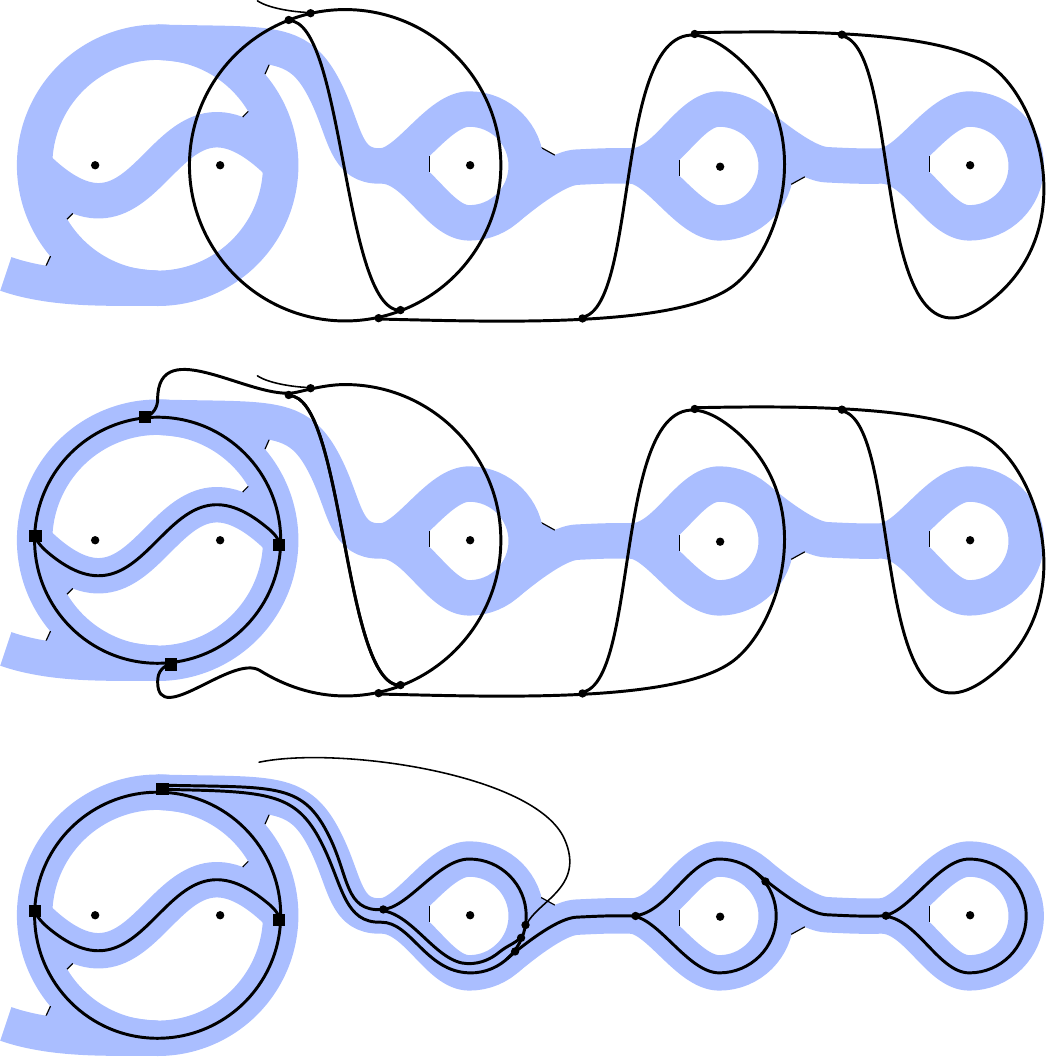}    
\caption{
A Type-2 mini-graph always lies directly above and covers a Type-3 mini-graph.
}
\label{fig:200803_1_tao_and_2_eyelets_over_one_tao_and_3_eyelets}
\end{figure}

\begin{figure}[ht!]
\labellist
\small\hair 2pt
\pinlabel {\fontsize{5}{7}\selectfont $r-1$} at 52 286
\pinlabel {\fontsize{5}{7}\selectfont $r$} at 121 272
\pinlabel {\fontsize{5}{7}\selectfont $r-1$} at 102 213
\pinlabel {\fontsize{5}{7}\selectfont $r$} at 42 213
\pinlabel {\fontsize{4}{6}\selectfont $2(s-1)$} at 240 210

\pinlabel
{\fontsize{4}{6}\selectfont $2s-1$} at 232 100
\pinlabel{\begin{rotate}{-30}
{\fontsize{4}{6}\selectfont $2s-2$}\end{rotate}} at 262 115
\pinlabel{\begin{rotate}{-30}
{\fontsize{4}{6}\selectfont $2s-2$}\end{rotate}} at 879 115
\pinlabel 
{\fontsize{4}{6}\selectfont $2s-3$} at 237 30
\pinlabel 
{\fontsize{4}{6}\selectfont $2s$} at 286 30
\pinlabel {\fontsize{4}{6}\selectfont $2s$} at 299 230
\pinlabel {\fontsize{5}{7}\selectfont $r-2$} at 94 29
\pinlabel {\fontsize{5}{7}\selectfont $r$} at 36 36
\pinlabel {\fontsize{5}{7}\selectfont $2$} at 250 77
\pinlabel {\fontsize{5}{7}\selectfont $2$} at 253 259

\pinlabel
{\fontsize{5}{7}\selectfont $2$} at 867 77
\pinlabel {\fontsize{5}{7}\selectfont $2$} at 870 259

\pinlabel {\fontsize{4}{6}\selectfont $2s$} [bl] at 210 273
\pinlabel {\fontsize{4}{6}\selectfont $2(s-1)$} [bl] at 244 273
\pinlabel {\fontsize{5}{7}\selectfont $r-1$} at 85 114

\pinlabel {\fontsize{5}{7}\selectfont $r$} at 659 286
\pinlabel {\fontsize{5}{7}\selectfont $r-1$} at 709 286
\pinlabel {\fontsize{5}{7}\selectfont $r-1$} at 668 212
\pinlabel {\fontsize{5}{7}\selectfont $r$} at 719 205

\pinlabel {\fontsize{5}{7}\selectfont $r$} at 659 106
\pinlabel {\fontsize{5}{7}\selectfont $r-1$} at 700 20
\pinlabel {\fontsize{5}{7}\selectfont $r-2$} at 710 106

\pinlabel{\fontsize{4}{6}\selectfont $2(s-1)$} at 857 210
\pinlabel {\fontsize{4}{6}\selectfont $2s$} at 916 230
\pinlabel {\fontsize{4}{6}\selectfont $2s$} [bl] at 827 273
\pinlabel {\fontsize{4}{6}\selectfont $2(s-1)$} [bl] at 861 273

\pinlabel{\fontsize{4}{6}\selectfont $2s-1$} at 850 105
\pinlabel 
{\fontsize{4}{6}\selectfont $2s-3$} at 854 30
\pinlabel 
{\fontsize{4}{6}\selectfont $2s$} at 903 30
\endlabellist

\centering
\includegraphics[width=1\textwidth]{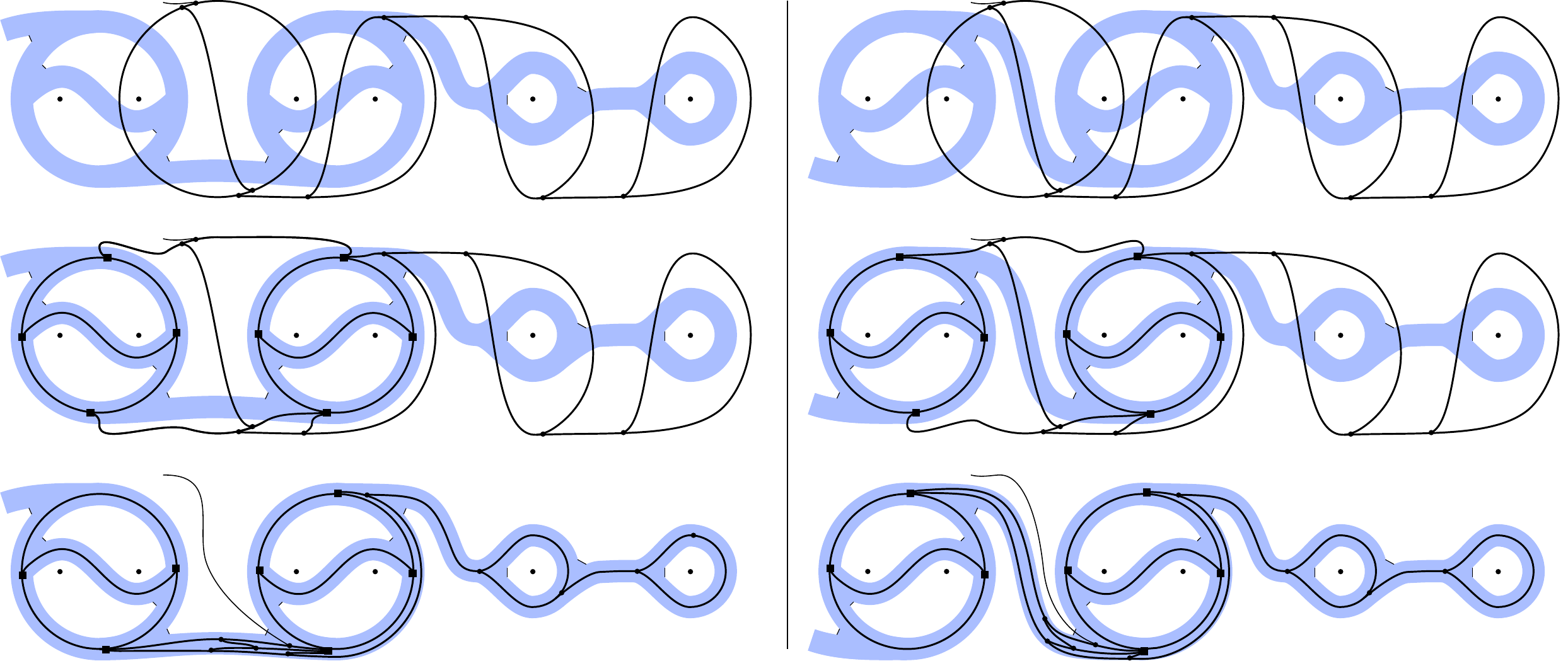}
\caption{A Type-3 mini-graph always lies directly above and covers a Type-2 mini-graph (assuming there exists a level below the level of the Type-3 minigraph).}
\label{fig:200803_1_tao_and_3_eyelets_over_2_taos_and_2_eyelets}
\end{figure}

\begin{prop}\label{prop:mini-graph_covers_the_one_below}
For each \(i=2,3,\hdots,n-1\), if \(T_i'\) is a mini-graph of \(T_i\), then \(\pi_{i-1}(T_i')\) covers the mini-graph directly below \(T_i'\). 
\end{prop}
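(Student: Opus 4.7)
The plan is to reduce the proposition to a finite case analysis by exploiting the decomposition provided in Observation \ref{obs:decompose_mini-graph}. Write \(T_i' = t_1 \cup t_2 \cup \cdots \cup t_k \cup c\), where each \(t_j\) is a typed (Type-0, 1, 2, or 3) mini-graph and \(c\) is a union of connecting arcs. By Observation \ref{obs:below_decomposed_mini-graph}, the mini-graph \(T_{i-1}'\) directly below \(T_i'\) decomposes as \(u_1 \cup u_2 \cup \cdots \cup u_k\), where \(u_j\) lies directly below \(t_j\). By Observation \ref{obs:mini-graph_union_below} together with the obvious fact that ``covers'' is preserved under taking unions (since the fiber-intersection condition in Definition \ref{def:covers} is union-closed and almost carrying is a local property), it suffices to prove two statements: first, that \(\pi_{i-1}(t_j)\) covers \(u_j\) for each \(j\), and second, that the projection of each connecting arc in \(c\) lies in \(\tau_{i-1}\) in a way that is compatible with almost carrying the relevant union.

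Each typed case is handled by an essentially pictorial argument built on Proposition \ref{prop:cover_a_tao}. For a Type-0 piece (a single tao) \(t_j\), the two strands at its interior cross the arc \(\delta\) associated with the twist region just below \(t_j\); since the plat is 4-twisted (so \(|a_{i,j}| \geq 4\) wherever a nonzero twist occurs), Proposition \ref{prop:cover_a_tao} yields that \(\pi_{i-1}(t_j)\) covers the tao in \(T_{i-1}\) directly below it, and a direct inspection shows that the remaining fibers of \(u_j\) (those of an adjacent eyelet in the Type-1 sub-case, or of the connecting arc in the two-tao sub-case) are also traversed. This is exactly what is depicted in Figure \ref{fig:200811_Type_one_twisted_down}. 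The Type-1, Type-2, and Type-3 cases are handled analogously using Figures \ref{fig:200720_a_tao_and_one_eyelet_over_two_taos}, \ref{fig:200720_type_one_over_type_two.pdf}, \ref{fig:200803_1_tao_and_2_eyelets_over_one_tao_and_3_eyelets}, and \ref{fig:200803_1_tao_and_3_eyelets_over_2_taos_and_2_eyelets}: in each case one observes that the eyelet loops and the strands of the tao project to arcs that meet each fiber of \(\tau_{i-1}\) appearing in the mini-graph below, again because the twist numbers are at least 4 so the projected arcs are forced to wind through every fiber interval.

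The connecting arcs in \(c\) require separate attention. A connecting arc between two adjacent taos of \(T_i\) projects to an arc in \(P_{i-1}\) which, after the twisting, can be put in a position almost carried by \(\tau_{i-1}\) that is disjoint from the fibers making up \(u_1, \ldots, u_k\) except at the endpoints, where it joins onto the projected taos. It must be checked that the projections of typed pieces and connecting arcs assemble into a single subgraph almost carried by \(\tau_{i-1}\); the switches of \(\tau_{i-1}\) at which adjacent projected pieces meet are precisely the switches present in \(u_1 \cup \cdots \cup u_k\), so almost-carrying follows from Definition \ref{defn:almostcarried_arc} applied locally at each switch.

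The main obstacle is the sign-sensitivity of the pictures: the handedness of each tao in \(T_i\) and in \(T_{i-1}\) is dictated by the signs of the twist numbers, and the hypothesis that adjacent rows have oppositely-signed rightmost nonzero twist regions is what makes the eyelets in \(T_i\) line up correctly above the corresponding eyelets or taos in \(T_{i-1}\). One must verify that in every combination of left- and right-handed taos permitted by the plat diagram, the projected strands wind in the direction that forces them through the fibers of \(u_j\) rather than looping back to miss them. This is ultimately what the figures encode, and the bookkeeping is routine but must be done with the handedness conventions of Figures \ref{fig:taonobands}, \ref{fig:connecttao}, \ref{fig:eyelet1}, and \ref{fig:eyelet2} in hand.
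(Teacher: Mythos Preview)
Your proposal is correct and follows essentially the same approach as the paper: decompose \(T_i'\) into typed mini-graphs plus connecting arcs via Observation \ref{obs:decompose_mini-graph}, verify each typed case pictorially using Proposition \ref{prop:cover_a_tao} and Figures \ref{fig:200811_Type_one_twisted_down}--\ref{fig:200803_1_tao_and_3_eyelets_over_2_taos_and_2_eyelets}, check that the connecting arcs are almost carried (the paper records this as Observation \ref{obs:connecting_arc_end_almost_carried}), and then reassemble via Observation \ref{obs:below_decomposed_mini-graph}. The only cosmetic difference is order of presentation---the paper treats the typed special cases first and states the decomposition afterward---and the paper does not invoke Observation \ref{obs:mini-graph_union_below} here, though your use of it is harmless.
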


\begin{proof}
Fix \(2\leq i\leq n-1\).
Let \(T_i'\subseteq T_i\) be a mini-graph, and let \(T_{i-1}'\) be the mini-graph directly below \(T_i'\).
We will prove this proposition by proving several special cases which will lead us to the general result.

To begin, consider the case in which \(T_i'\) is a Type-0 mini-graph (a tao).
The mini-graph \(T_{i-1}'\) may consist of two adjacent taos and their connecting arc, as in the first or second column of pictures in Figure \ref{fig:200811_Type_one_twisted_down}, or \(T_{i-1}'\) may instead be a Type-1 mini-graph, as depicted in the third column of pictures in Figure \ref{fig:200811_Type_one_twisted_down}.
In any case, the result of the isotopy of the bridge sphere from level \(i\) to level \(i-1\) is shown from the top row of pictures to the second row, or from the fourth row of pictures to the fifth row.

Observe that an isotopy of \(\pi_{i-1}(T_i')\) in \(P_{i-1}\) (the result of which is shown in the third and sixth rows of pictures in Figure \ref{fig:200811_Type_one_twisted_down}) shows how we may push \(\pi_{i-1}(T_i')\) into \(\tau_{i-1}\) so that \(\pi_{i-1}(T_i')\) covers \(T_{i-1}'\).

Next, if \(T_i'\) is a Type-1 mini-graph, then either \(T_{i-1}'\) is a pair of taos  (pictured in Figure \ref{fig:200720_a_tao_and_one_eyelet_over_two_taos}) or \(T_{i-1}'\) is a Type-2 mini-graph (pictured in Figure \ref{fig:200720_type_one_over_type_two.pdf}).
Either way, the figures illustrate that \(\pi_{i-1}(T_i')\) covers \(T_{i-1}'\).

Now suppose \(T_i'\) is a Type-2 mini-graph.
In this case, \(T_{i-1}'\) must be a Type-3 mini-graph.
Figure \ref{fig:200803_1_tao_and_2_eyelets_over_one_tao_and_3_eyelets} depicts this case and shows that \(\pi_{i-1}(T_i')\) covers \(T_{i-1}'\).

Finally, suppose \(T_i'\) is a Type-3 mini-graph.
It follows that \(T_{i-1}'\) is a Type-2 mini-graph, as depicted in Figure \ref{fig:200803_1_tao_and_3_eyelets_over_2_taos_and_2_eyelets}, which shows that as before, \(\pi_{i-1}(T_i')\) covers \(T_{i-1}'\).

\begin{obs}\label{obs:connecting_arc_end_almost_carried}
Notice that in each of the cases above, if \(c\) is a connecting arc of \(T_i\) which is attached to \(T_i'\) at vertex \(v\), then the connected component of \(c\cap\tau_{i-1}\) which contains \(v\) is almost carried by \(\tau_{i-1}\).
\end{obs}

Now that we have proven the proposition for cases in which \(T_i'\) is a typed mini-graph, we are ready to prove it in the general case where \(T_i'\) is an arbitrary mini-graph.
According to Observation \ref{obs:decompose_mini-graph}, we can view \(T_i'\) as a union \(T_i'=t_1\cup t_2\cup\cdots\cup t_k\cup c\) of typed mini-graphs and connecting arcs.
For each \(j\in\{1,2,\hdots,k\}\), define \(u_j\) to be the mini-graph of \(T_{i-1}\) below \(t_j\).
By Observation \ref{obs:below_decomposed_mini-graph}, \(T_{i-1}'=u_1\cup u_2\cup\cdots\cup u_k\).
The special cases above demonstrate that for each \(j\in\{1,2,\hdots,k\}\), the mini-graph \(u_j\) is covered by \(t_j\), so it follows that \(u_1\cup u_2\cup\cdots\cup u_k\) is covered by \(\pi_{i-1}(t_1\cup t_2\cup\cdots\cup t_k)\).
Further, if \(c_0\) is one of the connecting arcs of \(c\), then by Observation \ref{obs:connecting_arc_end_almost_carried},
\(c_0\) is also almost carried by \(\tau_{i-1}\).
Therefore, since \(u_1\cup u_2\cup\cdots\cup u_k\) is covered by \(\pi_{i-1}(t_1\cup t_2\cup\cdots\cup t_k)\) and \(c\) is almost carried by \(\tau_{i-1}\), we can conclude that \(u_1\cup u_2\cup\cdots\cup u_k\) is covered by \(\pi_{i-1}(t_1\cup t_2\cup\cdots\cup t_k\cup c)\), or more simply, \(T_{i-1}'\) is covered by \(\pi_{i-1}(T_i')\).
\end{proof}

\begin{cor}
Let \(\ell\) be a loop which covers a mini-graph \(T_i'\subseteq T_i\), and let \(T_{i-1}'\) be the mini-graph directly below \(T_i'\).
The loop \(\pi_{i-1}(\ell)\) covers \(T_{i-1}'\).
\end{cor}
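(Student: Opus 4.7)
The plan is to mirror the proof of Proposition \ref{prop:mini-graph_covers_the_one_below}, with the loop \(\ell\) playing the role previously played by the mini-graph \(T_i'\). This transfer is built into the statement of Proposition \ref{prop:cover_a_tao}, whose hypothesis and conclusion make no distinction between a loop and a subgraph of a train graph, so its conclusions can be invoked verbatim with \(\ell\) in place of a tao.

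The first step is to rephrase the hypothesis that \(\ell\) covers \(T_i'\) in the geometric language of the twist regions between levels \(i-1\) and \(i\). Any straight arc \(\delta\) connecting two adjacent punctures of \(P_i\) that is crossed by \(T_i'\) is crossed at a point lying on some interval fiber of \(\tau_i\); because \(\ell\) is almost carried by \(\tau_i\) and hits that fiber (by the definition of cover), \(\ell\) must cross \(\delta\) at least once. Thus wherever \(T_i'\) crosses a twist-region arc, \(\ell\) crosses the same arc at least as often.

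Next, I would use Observation \ref{obs:decompose_mini-graph} to decompose \(T_i'\) into a union \(t_1\cup\cdots\cup t_k\cup c\) of typed mini-graphs and connecting arcs, and then treat each typed piece individually. For a Type-0 piece, a direct application of Proposition \ref{prop:cover_a_tao} (justified by the previous paragraph) shows that \(\pi_{i-1}(\ell)\) covers the mini-graph directly below that piece. For Type-1, Type-2, and Type-3 pieces, the figures used in the proof of Proposition \ref{prop:mini-graph_covers_the_one_below} enumerate precisely which fibers of \(\tau_{i-1}\) receive strands after the twist; since \(\ell\) contributes at least as many parallel strands as \(T_i'\) does at every relevant fiber of \(\tau_i\), the same figures remain valid with the additional parallel strands of \(\pi_{i-1}(\ell)\) drawn alongside those of \(\pi_{i-1}(T_i')\), and \(\pi_{i-1}(\ell)\) again covers the mini-graph directly below.

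Finally, gluing these local coverings together along the connecting arcs \(c\) via the local analysis underlying Observation \ref{obs:connecting_arc_end_almost_carried} yields that \(\pi_{i-1}(\ell)\) hits every interval fiber of \(\tau_{i-1}\) that \(T_{i-1}'\) hits. The one remaining item is that \(\pi_{i-1}(\ell)\) must be almost carried by \(\tau_{i-1}\), and this should follow from precisely the same small isotopies performed in the figures of Proposition \ref{prop:mini-graph_covers_the_one_below}, now executed on the additional strands. I expect this almost-carried verification to be the main technical obstacle, since the extra parallel strands of \(\pi_{i-1}(\ell)\) must be pushed into \(\tau_{i-1}\) simultaneously without violating any of the conditions of Definition \ref{defn:almostcarried_arc}; however, because the extra strands merely sit parallel to those produced by \(\pi_{i-1}(T_i')\), the same local moves that work for \(T_i'\) will work simultaneously for every strand of \(\ell\).
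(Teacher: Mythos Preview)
Your approach differs substantially from the paper's. You propose to replay the entire case analysis of Proposition~\ref{prop:mini-graph_covers_the_one_below} with \(\ell\) substituted for \(T_i'\), revisiting each typed mini-graph and each figure. The paper instead treats Proposition~\ref{prop:mini-graph_covers_the_one_below} as a black box and uses Proposition~\ref{prop:train-track-in-train-track} to transfer the covering from \(T_i'\) to \(\ell\) in one stroke: given any fiber \(J\) of \(\tau_{i-1}\) that \(T_{i-1}'\) meets, Proposition~\ref{prop:mini-graph_covers_the_one_below} produces a point \(p\in\pi_{i-1}(T_i')\cap J\); the fiber \(I\) of \(\pi_{i-1}(\tau_i)\) through \(p\) satisfies \(I\subseteq J\) by Proposition~\ref{prop:train-track-in-train-track}; and since \(\ell\) covers \(T_i'\), the loop \(\pi_{i-1}(\ell)\) hits \(I\), hence \(J\). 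That is the whole proof.

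What the paper's route buys is exactly the avoidance of the ``main technical obstacle'' you flag: one never has to re-verify almost-carriedness for extra parallel strands of \(\pi_{i-1}(\ell)\), and one never reopens the figures. Your route could in principle be pushed through, but your opening reduction --- ``\(T_i'\) crosses \(\delta\) at a point on some fiber of \(\tau_i\), \(\ell\) hits that fiber, therefore \(\ell\) crosses \(\delta\)'' --- is not correct as stated: \(\ell\) meeting that fiber does not by itself force \(\ell\cap\delta\neq\varnothing\), because \(\delta\) need not coincide with the fiber. You would need the further observation that an almost-carried loop meeting every fiber along a band of \(\tau_i\) contains a strand traversing the whole band, which then meets any arc crossing the band. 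That is fixable, but the fiber-containment argument via Proposition~\ref{prop:train-track-in-train-track} renders all of it unnecessary.
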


\begin{proof}
Let \(J\) be an interval fiber of \(\tau_{i-1}'\) which \(T_{i-1}'\) intersects.
By Proposition \ref{prop:mini-graph_covers_the_one_below}, \(\pi_{i-1}(T_i')\) covers \(T_{i-1}'\), and so by the definition of covering, \(J\) is also intersected by \(\pi_{i-1}(T_{i}')\).

Let \(p\) be a point of \(\left(\pi_{i-1}(T_i')\right)\cap J\), and let \(I\) be the interval fiber of \(\pi_{i-1}(\tau_i)\) which contains \(p\).
By Proposition \ref{prop:train-track-in-train-track}, \(I\subseteq J\).
Further, since \(\ell\) covers \(T_i'\), \(\ell\) must by definition intersect \(I\).
It follows that since \(I\subseteq J\), \(\ell\) intersects \(J\).
\end{proof}

\begin{cor}\label{cor:loop-cover-below}
Let \(i_1<i_2\), and let \(T_{i_1}'\subseteq T_{i_1}\) be the mini-graph below a mini-graph \(T_{i_2}'\subseteq T_{i_2}\).
If \(\ell\) is a loop which covers \(T_{i_2}'\subseteq T_{i_2}\), then \(\pi_{i_1}(\ell)\) covers \(T_{i_1}'\).
\end{cor}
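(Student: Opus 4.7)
The plan is to prove this corollary by straightforward induction on the gap $i_2 - i_1$, using the immediately preceding corollary as the base case and the recursive structure built into Definition \ref{defn:below}.

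First, I would set up the base case $i_2 - i_1 = 1$. In this case, $T_{i_1}'$ is directly below $T_{i_2}'$, so the conclusion is exactly the previous corollary (which states that if $\ell$ covers a mini-graph $T_i'$, then $\pi_{i-1}(\ell)$ covers the mini-graph directly below $T_i'$). No further work is required for this case.

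Next, for the inductive step, I would assume the result holds whenever the gap is at most $i_2 - i_1 - 1$. By Definition \ref{defn:below}, since $T_{i_1}'$ is below $T_{i_2}'$, there exists a chain of mini-graphs $T_{i_2}', T_{i_2-1}', \ldots, T_{i_1}'$ in which each is directly below its predecessor. In particular, $T_{i_2-1}'$ is directly below $T_{i_2}'$, so the previous corollary applies to give that $\pi_{i_2-1}(\ell)$ covers $T_{i_2-1}'$. The truncated chain $T_{i_2-1}', T_{i_2-2}', \ldots, T_{i_1}'$ witnesses that $T_{i_1}'$ is below $T_{i_2-1}'$, and this gap is one smaller, so the inductive hypothesis applies to the loop $\pi_{i_2-1}(\ell)$ and yields that $\pi_{i_1}(\pi_{i_2-1}(\ell))$ covers $T_{i_1}'$.

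Finally, I would observe that the composite $\pi_{i_1} \circ \pi_{i_2-1}$ agrees with $\pi_{i_1}$ on $P_{i_2}$, since by construction every $\pi_y$ maps each strand of the plat braid to the corresponding fixed horizontal position and extends accordingly. Thus $\pi_{i_1}(\ell) = \pi_{i_1}(\pi_{i_2-1}(\ell))$, completing the induction. I do not anticipate any obstacle here; the corollary is essentially a bookkeeping consequence of iterating the preceding ``one-level'' corollary along the chain supplied by Definition \ref{defn:below}, and the only point worth stating explicitly is the compatibility of the projection maps under composition.
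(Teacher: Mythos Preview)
Your proposal is correct and is exactly the routine induction the paper has in mind: the paper states Corollary~\ref{cor:loop-cover-below} without proof, treating it as an immediate iteration of the preceding one-level corollary along the chain of mini-graphs supplied by Definition~\ref{defn:below}. Your explicit check that $\pi_{i_1}\circ\pi_{i_2-1}=\pi_{i_1}$ on $P_{i_2}$ is the only point that needed to be made precise, and it is handled correctly.
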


Recall the notation of Proposition \ref{prop:what-below-what}.
The leftmost blue disk $B$ above \(P_n\) is the only disk whose boundary loop covers \(T_{n-1}'\) but no other taos. 
In contrast, the boundary of every red disk above \(P_n\) must cover at least one of the other taos.
The next corollary then follows from Proposition \ref{prop:what-below-what} and Corollary \ref{cor:loop-cover-below}.

\begin{cor}\label{cor:what-B-and-reds-cover}
The boundary of the blue disk \(B\) above \(P_n\) covers \(T_1'\) (the mini-graph defined in Proposition \ref{prop:what-below-what}), and the boundary of every red disk above \(P\) covers either \(T_1\) or some mini-graph \(T_1''\) constructed from \(T_1\) by excluding the leftmost and/or the rightmost eyelet of \(T_1\).

\end{cor}

An almost carried loop $\ell$ that covers enough taos and eyelets is very beneficial in the sense that its presence allows us to predict the behavior of loops which are disjoint from $\ell$.

\begin{rem}
The following is Lemma 6.5 from Johnson and Moriah.
\end{rem}

\begin{lem}\label{lem:ACandDisjoint}
If $\ell$ is a loop in \(P_i\) that covers a mini-graph \(T_i''\) of \(T_i\), and if $\ell'$ is another loop in \(P_i\) disjoint from $\ell,$ then $\ell'$ can be isotoped to be almost carried by $\tau_i''$, the train track diagram corresponding to \(T_i''\).
\end{lem}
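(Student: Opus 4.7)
The plan is to isotope $\ell'$ in stages, using the position of $\ell$ to control the isotopy. First, put $\ell'$ in general position with respect to the fibration of $\tau_i$: make $\ell'$ transverse to $\partial\tau_i$ and disjoint from all fiber endpoints, and make every component of $\ell'\cap\tau_i$ transverse to the interval fibers of $\tau_i$. This can be done by a small perturbation supported in a small neighborhood of $\ell'$, so the new $\ell'$ remains disjoint from $\ell$. After this step, conditions (1) and (2) of Definition \ref{defn:almostcarried_arc} hold for every component arc of $\ell'\cap\tau_i$ and every component arc of $\ell'\cap(P_i\backslash\interior{\tau_i})$.

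The remaining work is to eliminate those arcs of $\ell'\cap(P_i\backslash\interior{\tau_i})$ that violate conditions (3) or (4). Suppose $\alpha'$ is such an arc, parallel rel $\partial\alpha'$ into an arc $\beta\subseteq\partial\tau_i$ which is either a switch subarc (failure of (3)) or a concatenation switch--fiber-endpoint-interval--switch (failure of (4)). Let $D\subseteq P_i\backslash\interior{\tau_i}$ be the parallelism disk, so $\partial D=\alpha'\cup\beta$. The key claim is that $D\cap\ell=\emptyset$. Indeed, any component $\gamma$ of $\ell\cap(P_i\backslash\interior{\tau_i})$ whose interior enters $D$ must exit $D$ through $\partial D$; since $\ell\cap\ell'=\emptyset$, $\gamma$ cannot cross $\alpha'$, so both endpoints of $\gamma$ lie on $\beta$. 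But then $\gamma$ is itself parallel into $\beta$, violating the almost-carried property of $\ell$---a contradiction. Since the interior of $D$ lies in $P_i\backslash\tau_i$, it is automatically disjoint from $\ell\cap\tau_i$ as well, confirming the claim. We may then isotope $\alpha'$ across $D$ and slightly into $\interior{\tau_i}$ (as in Remark \ref{rem:fulfill_conditions_1-2} and Figure \ref{fig:200501_almost_carried}) without introducing any intersection with $\ell$.

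Each such isotopy strictly decreases the nonnegative integer $|\ell'\cap\partial\tau_i|$, so after finitely many iterations no component of $\ell'\cap(P_i\backslash\interior{\tau_i})$ violates (3) or (4). Conditions (1) and (2) are preserved throughout (and can be restored by a final small perturbation if necessary), so the resulting $\ell'$ is almost carried by $\tau_i$.

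The main obstacle in this argument is the disjointness claim $D\cap\ell=\emptyset$, which hinges essentially on $\ell$ itself being almost carried. This is where the hypothesis that $\ell$ \emph{covers} $T_i$ enters: covering requires $\ell$ to be almost carried and moreover that $\ell$ meets every interval fiber of $\tau_i$ intersected by $T_i$, which in turn rules out pathological configurations in which an arc of $\ell'\cap\tau_i$ could sit inside a branch of $\tau_i$ without being forced to cross its fibers transversely. Thus the covering assumption provides both the contradiction that drives the reduction and the rigidity of $\tau_i$ needed for condition (1) to persist throughout the isotopy.
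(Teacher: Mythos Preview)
Your argument has a genuine gap in the first step. You assert that a small perturbation makes every component of $\ell'\cap\tau_i$ transverse to the interval fibers of $\tau_i$, but transversality to a one-dimensional foliation is \emph{not} a generic condition: an arc of $\ell'$ that enters and leaves a band of $\tau_i$ through the same switch necessarily has a tangency with some fiber, and no $C^0$-small isotopy removes it. Condition (\ref{def:almost_carried_condition_1}) of Definition \ref{defn:almostcarried_arc} is exactly this everywhere-transversality, and it is the hardest condition to arrange; your later reduction for (\ref{def:almost_carried_condition_3}) and (\ref{def:almost_carried_condition_4}) via the parallelism disk $D$ is correct, but (\ref{def:almost_carried_condition_1}) is simply assumed.

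This gap is also where the covering hypothesis must enter, and your argument as written never uses it in an essential way. Your reduction uses only that $\ell$ is almost carried, so if the argument worked it would prove: any loop disjoint from an almost-carried loop is itself isotopic to an almost-carried loop. That statement is false---take $\ell$ to be a tiny loop encircling a single puncture (almost carried by Remark \ref{rem:disjoint}, being disjoint from $\tau_1$) and $\ell'$ the boundary of any compressing disk below $P_1$; one can make $\ell\cap\ell'=\varnothing$, yet by Lemma \ref{lem:not_almost_carried} such an $\ell'$ is never almost carried. The paper's proof uses covering essentially: since $\ell$ meets every fiber of $\tau_i$, one may shrink $\tau_i$ along its fibers until all fiber endpoints lie in a neighborhood $N(\ell)$, so that $\tau_i\setminus N(\ell)$ is a disjoint union of rectangular bands. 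Minimizing $\ell'$ against these bands inside $P_i\setminus N(\ell)$ then forces each arc of $\ell'$ in a band to run from one sub-switch to the other, which is precisely what yields condition (\ref{def:almost_carried_condition_1}). Your final paragraph gestures at this (``rules out pathological configurations'') but does not supply the mechanism.
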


\begin{figure}
\centering
\includegraphics[width=.7\textwidth]{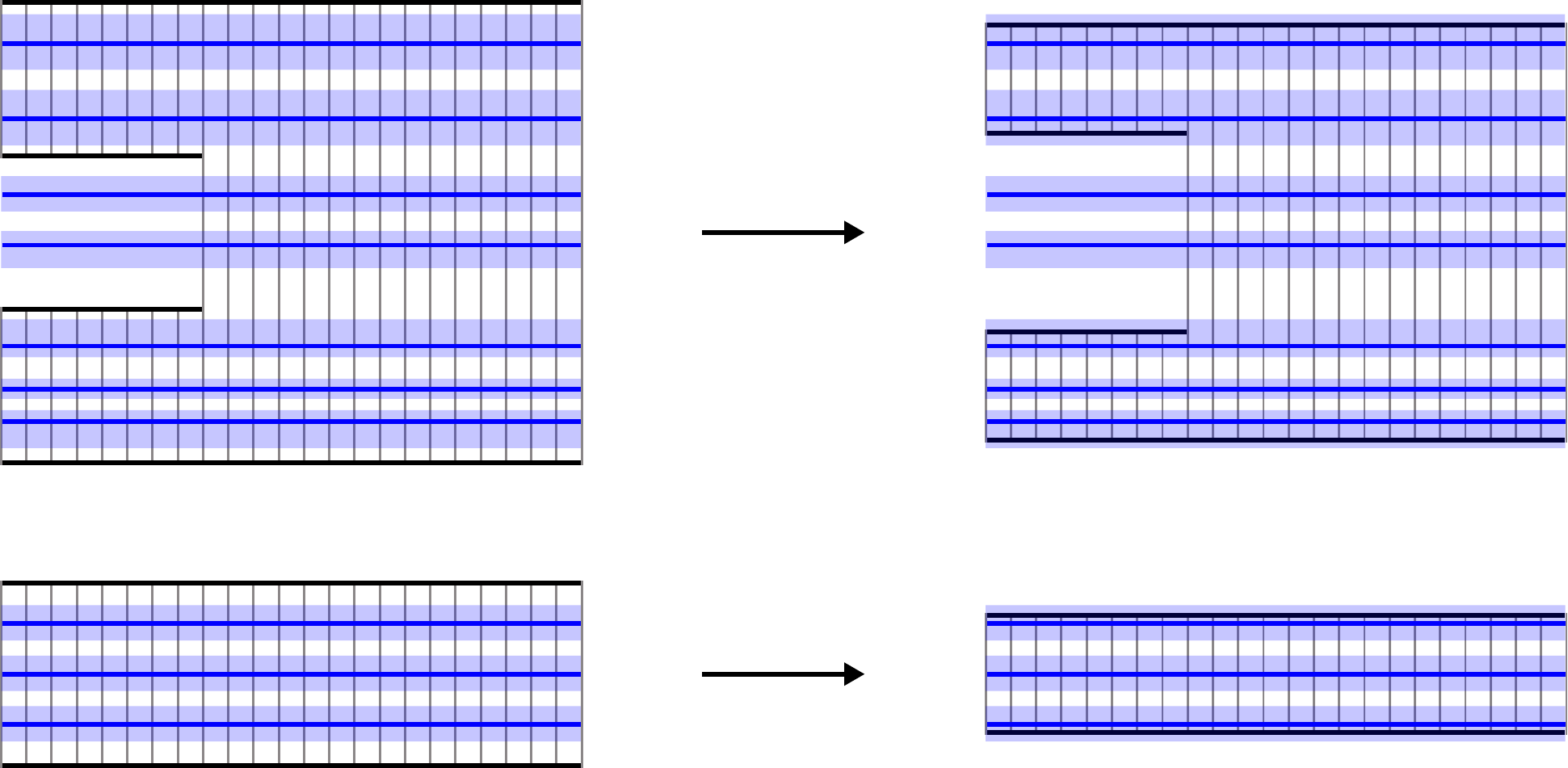}
\caption{The loop \(\ell\), shown in blue, covers the train track \(\tau_i''\).
We perform a small isotopy of \(\tau_i''\) which takes each arc of \(\partial\tau_i''\) consisting of fiber endpoints into a regular neighborhood \(N(\ell)\) of \(\ell\).
}
\label{fig:200501_tau_isotopy}
\end{figure}

\begin{proof}
Let \(N(\ell)\) be an open regular neighborhood of \(\ell\) disjoint from \(\ell'\).
Since \(\tau_i''\) is covered by \(\ell\), every interval fiber of \(\tau_i''\) intersects \(N(\ell)\).

We perform a small isotopy of \(\tau_i''\) with the following properties:
\begin{enumerate}
    \item The image of each interval fiber of \(\tau_i''\) at each moment of the isotopy is a subinterval of the original interval fiber.
    \item The endpoints of each interval fiber of \(\tau_i''\) never intersect \(\ell\) throughout the isotopy.
    \item After the isotopy, both endpoints of every interval fiber of \(\tau_i''\) lie in \(N(\ell)\).
\end{enumerate}
The result of this isotopy is illustrated in Figure \ref{fig:200501_tau_isotopy}.
The point is that each arc of \(\partial\tau_i''\) consisting of endpoints of interval fibers gets pushed into \(N(\ell)\).
Now each component of \(\tau_i''\backslash N(\ell)\) is a band in \(P_i\) fibered by intervals (each of which is a subinterval of the original interval fibers of \(\tau_i''\)). The two interval fibers contained in the boundary of a band will be referred to as \textit{exits}.
Note that topologically, each of these bands is a closed disk.

Now we isotope \(\ell'\) in \(P_i\backslash N(\ell)\) to intersect these bands minimally.
Suppose \(\ell'\cap\tau_i''=\varnothing\).
Then by Remark \ref{rem:disjoint}, \(\ell'\) is almost carried by \(\tau_i''\).
If \(\ell'\cap\left(P_i\backslash\interior{\tau_i}''\right)\) is empty (that is, \(\ell'\) lies completely in a band), then we can perform an isotopy of \(\ell'\), pushing it out of the band through an exit, contradicting minimality.

Assume then that both \(\ell'\cap\tau_i''\) and \(\ell'\cap\left(P_i\backslash\interior{\tau_i}''\right)\) are nonempty.
Consider a component \(\alpha\) of \(\ell'\cap\left(\tau_i''\backslash N(\ell)\right)\).
The component \(\alpha\) must be an arc properly embedded in a band.
Since \(\ell'\cap N(\ell)\) is empty, the endpoints of \(\alpha\) must lie in exits.
Further, the endpoints of \(\alpha\) must lie on \textit{different} exits, for otherwise \(\alpha\) could be isotoped out of the band, reducing the number of components of \(\ell'\cap\left(\tau_i''\backslash N(\ell)\right)\), again contradicting minimality.
Thus \(\alpha\) is an arc that travels through a band from one exit to another, so it follows that the arc \(\alpha\) can then be made transverse to each fiber of the band, and thus \(\ell'\) fulfills conditions \ref{def:almost_carried_condition_1} and \ref{def:almost_carried_condition_2} of the definition of almost carried.
(Note that \(\alpha\) also vacuously fulfills  condition \ref{def:almost_carried_condition_3}.)

Now let \(\beta\) be a component of \(\ell'\cap \left(P_i\backslash\interior{\tau_i}''\right)\).
The endpoints of \(\beta\) lie on \(\partial\tau_i''\).
Since the interval fiber endpoints of \(\tau_i''\) all lie in \(N(\ell)\) and \(\ell'\cap N(\ell)\) is empty, the endpoints of \(\beta\) must lie in exits.
Clearly then, \(\beta\) satisfies conditions \ref{def:almost_carried_condition_1} and \ref{def:almost_carried_condition_2} of the definition of almost carried.
The arc \(\beta\) cannot be parallel in \(P_i\backslash N(\ell)\) to a subarc of an exit, for the parallelism would guide an isotopy of \(\ell'\) through a band of \(\tau_i''\backslash N(\ell)\), thereby removing two components of intersection between \(\ell'\) and the bands, once again contradicting minimality.
Thus \(\ell'\) fulfills condition \ref{def:almost_carried_condition_3} of the definition of almost carried.

We have shown that each arc of \(\ell\cap\tau_i''\) and each arc of \(\ell\cap\left(P_i\backslash\interior{\tau_i}''\right)\) satisfies conditions \ref{def:almost_carried_condition_1}, \ref{def:almost_carried_condition_2} and \ref{def:almost_carried_condition_3} of Definition \ref{defn:almostcarried_arc}.
Remark \ref{rem:fulfill_conditions_1-2} tells us that each such arc can be isotoped to be almost carried by \(\tau_i''\).
Therefore \(\ell'\) is by definition almost carried by \(\tau_i''\).
\end{proof}

% In \cite{johnson2016bridge}, the authors showed the following statements for their train tracks, which were integral to the proof of their main theorem:
% \begin{enumerate}
%     \item The boundary of any compressing disk above \(P_1\) is always almost carried by \(\tau_1\).
%     \item The boundary of any compressing disk below \(P_1\) is never almost carried by \(\tau_1\).
% \end{enumerate}
%  In this section, we will prove the analogous statements for our train tracks. 

Henceforth, on $P_n$, we label the punctures as $p_1,p_2,\hdots,p_{2m}$ in order from left to right. We label the straight arcs connecting the puncture labelled $p_{2k-1}$ to the puncture labelled $p_{2k}$ as $\beta^k$. Finally, we label the straight arcs connecting the puncture labelled $p_{2k}$ to the puncture labelled $p_{2k+1}$ as $\gamma^k$.

\begin{lem}\label{lem:not_almost_carried}
As above, let \(T_1'\) be the mini-graph of \(T_1\) constructed by excluding the rightmost two eyelets, and let \(\tau_1'\) be the train track diagram corresponding to \(T_1'\).
Let \(T_1''\) be a mini-graph of \(T_1\) constructed by possibly excluding the leftmost and/or rightmost eyelet of \(T_1\), and let \(\tau_1''\) be the train track diagram corresponding to \(T_1''\).
Neither \(\tau_1'\) nor \(\tau_1''\) almost carries the boundary of any red cap for the rightmost bridge arc \(\alpha_-^m\).
\end{lem}

% The train track diagram \(\tau_1''\), as defined in Lemma \ref{lem:not_almost_carried}, may be any of four distinct train track diagrams, only differing from each other by the presence or absence of eyelets on the left and right.
% Happily, we will not need to consider four separate cases; the argument below applies equally to all of them.

\begin{proof}

We prove this by contradiction.
Let \(R\) be a red cap for \(\alpha_-^m\) (which implies \(R\) is not isotopic to the blue cap \(B'=dD_-^m\)), and assume \(\partial R\) is almost carried by \(\tau_1'\) or \(\tau_1''\).
Assume \(R\) is in minimal position with respect to the vertical bridge disks below \(P_1\).

\begin{figure}
\centering
\labellist \tiny\hair 2pt
\pinlabel {
\begin{rotate}{55}
{\tiny
Case 1
}
\end{rotate}
} at 468 34
\pinlabel {
\begin{rotate}{40}
{\tiny
Case 2
}
\end{rotate}
} at 485 -5
\pinlabel {
\begin{rotate}{45}
{\tiny
Case 3
}
\end{rotate}
} at 522 20
\pinlabel {\fontsize{5}{6}\selectfont Case 4}
at 570 22
\pinlabel {\fontsize{5}{6}\selectfont Case 5}
at 589 79
\endlabellist
\includegraphics[width=.95\textwidth]{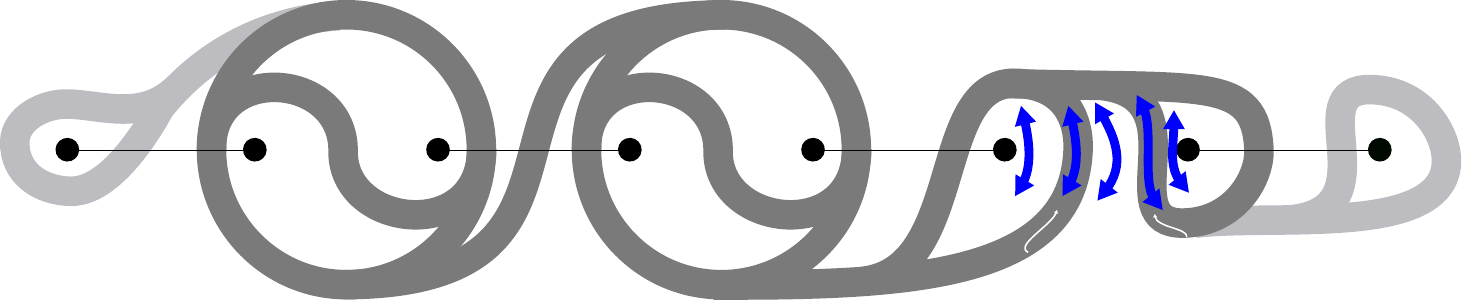}
\caption{There are only five ways for a loop or arc to pass between the rightmost two bridge arcs while remaining almost carried by \(\tau_1''\).
The leftmost and rightmost eyelets in this figure are shaded a lighter color to remind the reader that they may or may not be present in \(\tau_1''\).}
\label{fig:211207NotAlmostCarriedProof}
\end{figure}

We will first establish that \(R\) must intersect a lower vertical bridge disk.
Let \(\Gamma_1\) be the unique straight line segment in \(P_1\) which both contains all of the punctures and has \(p_1\) and \(p_{2m}\) as endpoints.
If \(\partial R\) does not intersect \(\Gamma_1\), then \(R\) is trivial, a contradiction.
So \(R\) must intersect \(\Gamma_1\).
Suppose \(\partial R\) passes between the $i$-th and the $(i+1)$-th vertical bridge disks for $i\in \lbrace 1,2,\hdots,m-3,m-2\rbrace$.
(This is equivalent to supposing that \(\partial R\) intersects \(\sigma_1(\gamma_i)\).)
Since \(\sigma_1(\gamma_i)\) is surrounded by a tao, and since \(\partial R\) is almost carried by the train track \(\tau_1'\) or \(\tau_1''\), the loop \(\partial R\) is 
forced to intersect one of the lower vertical bridge disks.
Suppose \(\partial R\) passes between the $(m-1)$-th and $m$-th vertical bridge disk.
If \(\partial R\) does not also pass between some other pair of bridge disks, then \(R\) is the blue disk \(B'\), a contradiction.
Then \(\partial R\) must pass between two of the other bridge disks, and so according to the argument above, \(\partial R\) will intersect one of the lower bridge disks.
Thus we have established that \(R\) is not disjoint from the vertical bridge disks.

Let \(\Lambda=R\cap\left(\cup_{i=1}^m D_-^i\right)\), which is nonempty, as shown above.
Since \(R\) is in minimal position with respect to the bridge disks, \(\Lambda\) contains no loops of intersection, and so \(\Lambda\) is a collection of arcs.
Let \(\gamma\subseteq\Lambda\) be an outermost arc in \(R\), cutting off an outermost disk \(R_{\text{out}}\) from \(R\).
Define \(q=R_{\text{out}}\cap P_1\).
Then \(q\) is an arc in \(P_1\) with endpoints on \(\beta_i\) for some \(i\).
Let \(\beta_*\) be the subarc of \(\beta_i\) which shares its endpoints with \(q\).

% Now we specifically consider the case in which \(\partial R\) is almost carried by \(\tau_1''\).
We examine what \(q\) can look like (and eventually arrive at a contradiction).
First if \(q\) never crosses the arc \(\Gamma_1\), then \(q\) would define an isotopy of \(R\) through which we could decrease the number of components of \(\Lambda\), contradicting the fact that \(R\) is in minimal position with respect to the lower vertical bridge disks.
Since the interior of \(q\) is by definition disjoint from the lower vertical bridge disks, \(q\) must therefore intersect $\sigma_1(\gamma^k)$ for some $k$. 
If \(q\) passes between the $j$-th and the $(j+1)$-th vertical bridge disks for $j \in \lbrace 1,2,\hdots,m-3,m-2\rbrace$, then as above, since \(q\) is almost carried and since the point \(q\cap\sigma_1(\gamma_k)\) is surrounded by a tao, the train track \(\tau_1'\) or \(\tau_1''\) (whichever is relevant) will force \(q\) to intersect two distinct vertical bridge disks, a contradiction.
Therefore \(q\) must pass between the $(m-1)$-th and the $m$-th bridge disks.

% \begin{figure}
%       \centering
%         \labellist \small\hair 2pt
% \pinlabel {$q$}
%  at 475 50
% \endlabellist
%     \includegraphics[width=.9\textwidth]{}
%     \caption{There is only one arc \(q\).
%     The disk \(Q\) is yellow (and goes off to infinity).}
%     \label{fig:211207NotAlmostCarriedProof_2}
% \end{figure}

Suppose that \(\partial R\) is almost carried by \(\tau_1''\).
There are five ways that \(q\), as an almost carried arc, can pass between the $(m-1)$-th and the $m$-th bridge disks, and they are illustrated in Figure \ref{fig:211207NotAlmostCarriedProof}.
In Cases 1 and 2, since \(q\) is almost carried, the endpoints of \(q\) must lie on both the $(m-1)$-th and the $(m-2)$-th bridge disks, but that contradicts the definition of \(q\), for both endpoints of \(q\) must lie on the same vertical bridge disk.
Similarly, in Cases 4 and 5, the endpoints of \(q\) must lie on both the $(m-1)$-th and the $m$-th bridge disks, which contradicts the definition of \(q\) in the same way. 
Therefore the only case remaining is Case 3, in which we see \(q\) must enter a switch of \(\tau_1''\) and go on to intersect the $(m-1)$-th bridge disk.
By the definition of \(q\), the other endpoint of \(q\) must also intersect the $(m-1)$-th bridge disk on the same side.
Thus \(q\) has these properties:
\begin{itemize}
    \item The arc \(q\) has both endpoints on the $(m-1)$-th bridge disk.
    \item Of the two bands of \(\tau_1''\) going through the $(m-1)$-th bridge disk, an endpoint of \(q\) is contained in the rightmost one. 
    \item The arc $q$ is almost carried by \(\tau_1''\).
    \item The interior of $q$ does not intersect any bridge disks below \(P_1\).
    \item The arc $q$ leaves the $(m-1)$-th bridge disk in the same direction from both endpoints.
\end{itemize}
Up to isotopy, there is only one arc that has these properties, and it is depicted in Figure \ref{fig:220907_not-almost-carried-Q}.
Let \(a\) and \(b\) be the left and right endpoints, respectively, of \(q\).

\begin{figure}
\centering
%\labellist \small\hair 2pt
%\pinlabel {$q$}
%at 475 50
%\endlabellist
\includegraphics[width=.6\textwidth]{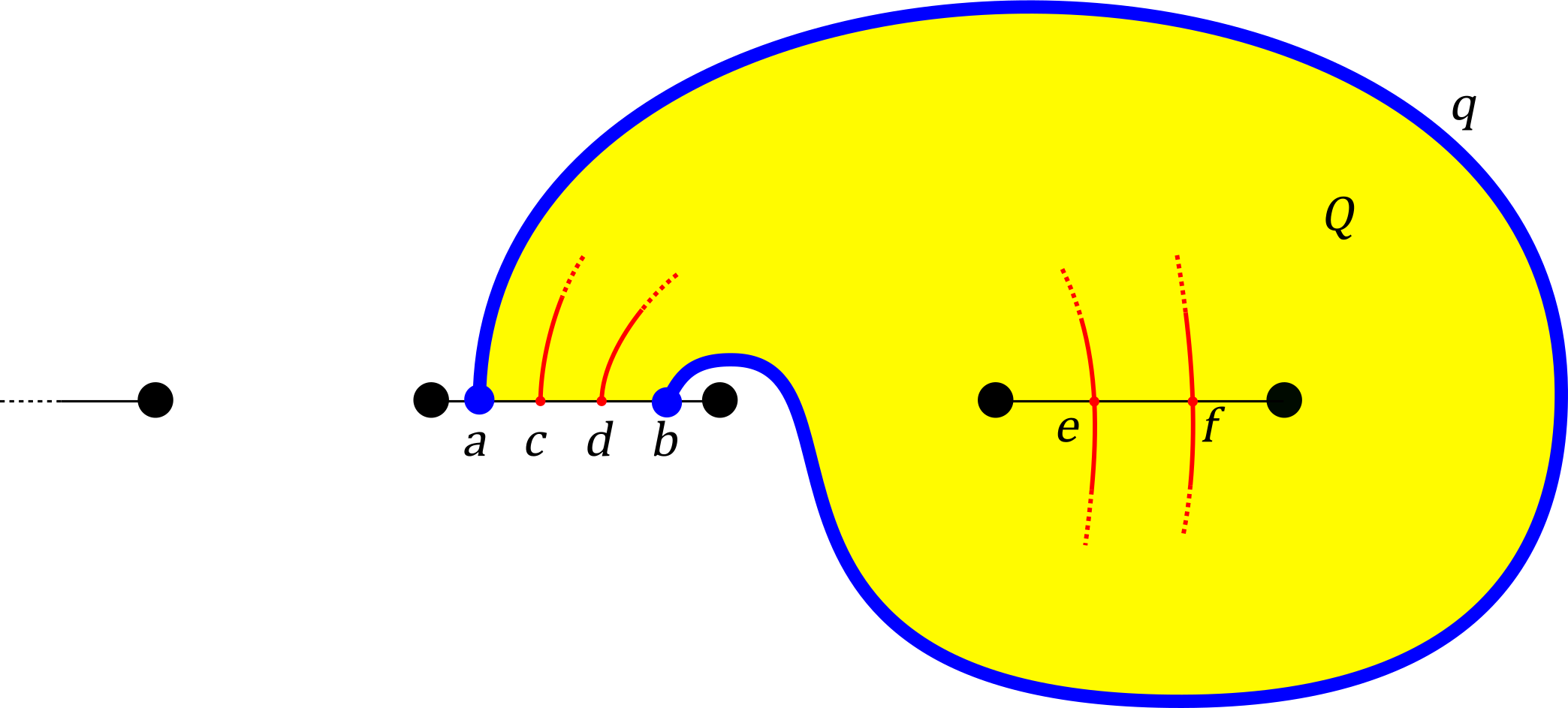}
\caption{The arc \(q\) and the disk \(Q\).}
\label{fig:220907_not-almost-carried-Q}
\end{figure}

The loop \(q\cup\beta_*\) cuts \(P_1\) into two punctured disks, one of which contains exactly two punctures: the endpoints of \(\beta_m\).
Call this 2-punctured disk \(Q\).
(See Figure \ref{fig:220907_not-almost-carried-Q}.)
Observe that \(\partial R\) must intersect \(\beta_m\), or else \(R\) would be isotopic to \(B'\), contradicting the definition of \(R\).
Further, since both endpoints of \(\beta_m\) must be on the same side of \(\partial R\), there must be an even number of points of intersection between \(\beta_m\) and \(\partial R\).
It follows that along the interior of \(\beta_*\), there must be at least four points of intersection with \(\partial R\).
Along \(\beta_*\), let \(c\) be the point of \(\beta_*^\circ\cap\partial R\) nearest to \(a\), and let \(d\) be the point of \(\beta_*^\circ\cap\partial R\) nearest to \(b\).

\begin{figure}
\centering
%\labellist \small\hair 2pt
%\pinlabel {$q$}
%at 475 50
%\endlabellist
\includegraphics[width=.3\textwidth]{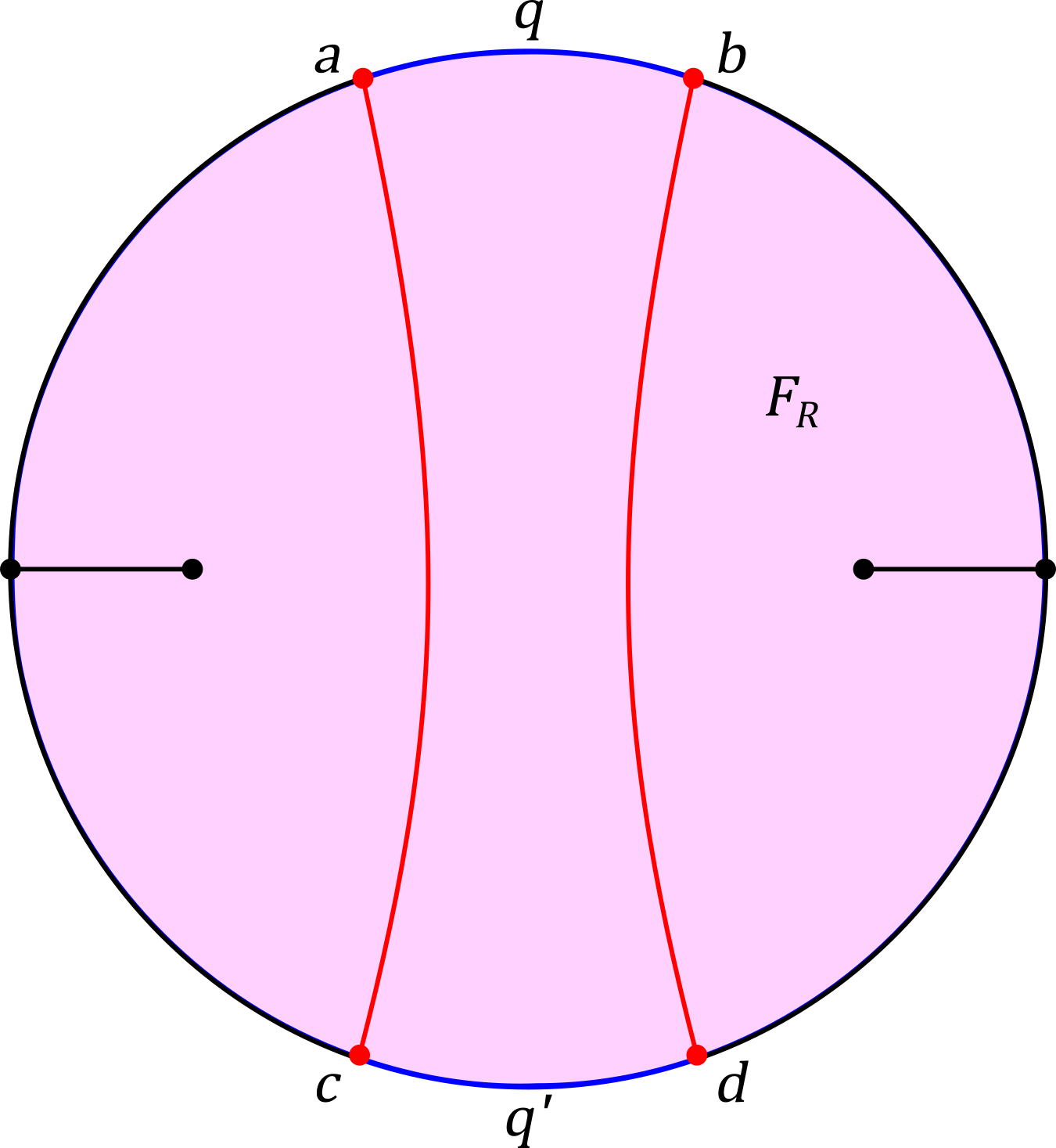}
\caption{The arc \(q\) in relation to the disk \(F_R\).
The outer circle is \(\partial R\), and the red vertical arcs are subarcs of \(\beta^*\).
}
\label{fig:220907_not-almost-carried-F_R}
\end{figure}

Since \(R\) is a cap, \(R\) cuts a 2-punctured disk \(F_R\) out of \(P_1\).
(See Figure \ref{fig:220907_not-almost-carried-F_R}.)
Consider the components of \(\left(\cup_{i=1}^m D_-^i\right)\cap F_R\).
There are two components which are arcs that connect a puncture in \(F_R\) to \(\partial R\) at points we will call \(x\) and \(y\).
All of the rest of arcs are parallel arcs which separate the two punctures of \(F_R\).

The points \(x\) and \(y\) cut \(\partial R\) into two arcs, one of which must contain both endpoints of \(q\); otherwise \(q\) would intersect \(\beta_m\) at point \(x\) or at point \(y\), contradicting the definition of \(q\).
Now in \(F_R\), \(a\) and \(b\) are connected via arcs of \(\partial R\) to the points \(c\) and \(d\), respectively.
These two arcs in \(F_R\) with endpoints \(a\), \(b\), \(c\), and \(d\), along with the arc \(q\) and another arc of \(\partial R\) form a quadrilateral in \(F_R\) whose interior is disjoint from \(\cup_{i=1}^m D_-^i\) (clear from Figure \ref{fig:220907_not-almost-carried-Q}).
Let the side of this quadrilateral whose endpoints are \(c\) and \(d\) be called \(q'\).

Now \(q'\) is parallel to \(q\).
But at this point we could repeat this argument, focusing on \(q'\) instead of on \(q\), which would lead us to accept the existence of another parallel arc \(q''\), and we could repeat this infinitely many times, each time obtaining another arc of \(\partial R\) with endpoints on \(\beta_{m-1}\), each of which is nested inside the last one.
But this contradicts general position; it cannot be the case that \(\beta_{m-1}\) cuts \(\partial R\) into infinitely many subarcs.
Therefore we conclude that \(R\) cannot be almost carried by \(\tau_1''\).

\begin{figure}
\includegraphics[width=.9\textwidth]{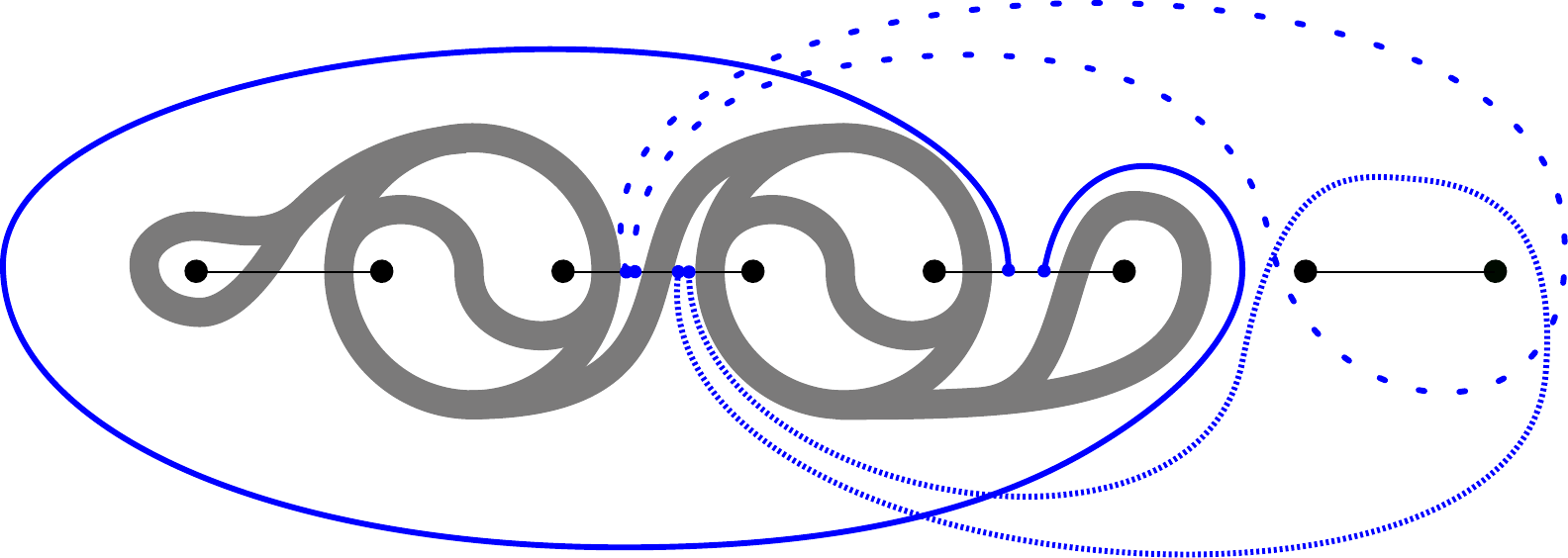}
\caption{When \(\partial R\) is almost carried by \(\tau_1'\), there are many possibilities for the arc \(q\).
Three are illustrated here (each with a different stroke style.)}
\label{fig:211207NotAlmostCarriedProof_case2.pdf}
\end{figure}

\begin{figure}
\includegraphics[width=.9\textwidth]{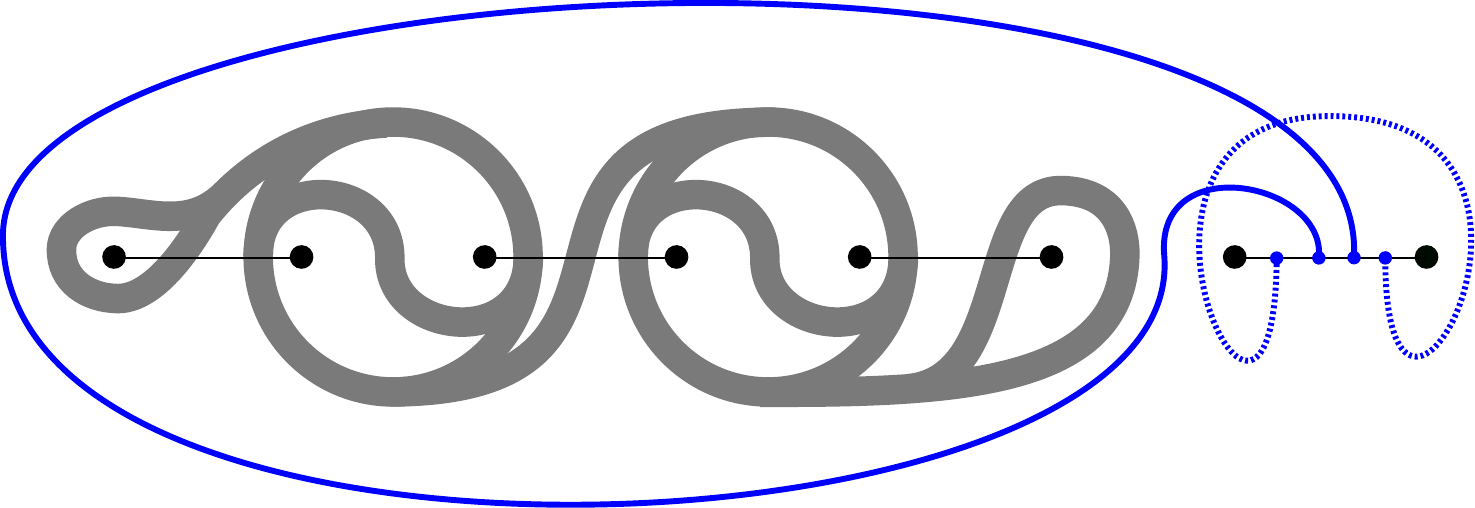}
\caption{Pictured here are the only non-isotopic possibilities for what \(q\) could look like, given that its endpoints lie on \(\beta_m\).
Either way, \(\partial R\backslash q\) must contain an arc that cobounds a bigon with \(\beta_m\), contradicting minimality. }
\label{fig:211207NotAlmostCarriedProof_case2-contra}
\end{figure}

Assume then that \(\partial R\) is almost carried by \(\tau_1'\).
In this case, there are more options for what the arc \(q\) may look like, but \(q\) still must be an arc with endpoints on a \(\beta\)-arc and with interior disjoint from any \(\beta\)-arcs.
(See Figure \ref{fig:211207NotAlmostCarriedProof_case2.pdf}.)
The arc \(q\) cannot have its endpoints on \(\beta_m\), as in Figure \ref{fig:211207NotAlmostCarriedProof_case2-contra}, because that would force \(\partial R\backslash q\) to contain an arc which bounds a bigon with \(\beta_m\); in other words, \(\partial R\) would not be in minimal position with respect to \(\beta_m\).
Then since \(\partial q\) lies in \(\beta_{i}\) for some \(i\in\{1,2,3,\dots,m-1\}\), then \(q\cup\beta_*\) bounds a disk \(Q\subseteq P_R\) which contains \(\beta_m\).
At this point we can apply the same logic as above, leading us to assert the existence of an infinite set \(\{q,q',q'',q''',\hdots\}\) of pairwise disjoint arcs of \(\partial R\) cut out by \(\beta_i\), which again contradicts general position.
Therefore, we may conclude that \(\partial R\) cannot be almost carried by \(\tau_1''\) either.
\end{proof}

\begin{lem}\label{prop:disjointblue}
The blue disks \(B\) and \(B'\) are a weak reducing pair for the bridge sphere.
\end{lem}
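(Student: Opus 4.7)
The plan is to exhibit disjoint representatives of $\partial B$ and $\partial B'$ on a common bridge sphere, which I take to be $P_1$. Since $B'$ is already a compressing disk below $P_1$ with boundary on $P_1$, the first step is to realize $B$ as a compressing disk above $P_1$. I would do this by attaching to $\partial B\subseteq P_n$ the annulus $\bigcup_{y\in[1,n]}\pi_y(\partial B)$ traced out by the plat isotopy, producing a compressing disk $\widetilde{B}$ above $P_1$ whose boundary is $\pi_1(\partial B)$.

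The heart of the argument is then to locate $\pi_1(\partial B)$ and $\partial B'$ on $P_1$ with respect to the train track $\tau_1$. Since $B$ is built from the leftmost cap arc $\alpha^1$, the loop $\partial B$ on $P_{n-1}$ is a small loop encircling only the leftmost two punctures and hence covers only the leftmost tao $T_{n-1}'$ (and no other taos). Tracking down through the plat via Proposition \ref{prop:mini-graph_covers_the_one_below} and Corollary \ref{cor:what-B-and-reds-cover}, $\pi_1(\partial B)$ covers the mini-graph $T_1'$, which is obtained from $T_1$ by deleting the rightmost two eyelets. Moreover, by inductively applying the local pictures in the proof of Proposition \ref{prop:mini-graph_covers_the_one_below}, I would argue that at each level the image loop can be confined to a regular neighborhood of the mini-graph directly below the previous one. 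In particular, $\pi_1(\partial B)$ may be arranged to lie within a regular neighborhood of the sub-train-track supporting $T_1'$ and disjoint from the portion of $\tau_1$ corresponding to the two rightmost eyelets.

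On the other hand, $\partial B'$ is by construction a simple loop in $P_1$ encircling only the two rightmost punctures (the endpoints of $\alpha_m$). Because these are precisely the two punctures surrounded by the rightmost two eyelets of $T_1$, I can choose a representative of $\partial B'$ that encloses both of these eyelets from the outside, lying entirely in the complement of the regular neighborhood of $T_1'$. Such a representative is disjoint from $\pi_1(\partial B)$ by construction, so $B$ and $B'$ form a weak reducing pair for the bridge sphere.

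The principal obstacle is the claim that $\pi_1(\partial B)$ can be kept inside a neighborhood of $T_1'$, rather than stretching into the parts of $\tau_1$ near the two rightmost eyelets. Establishing this requires tracking the local models in Figures \ref{fig:200811_Type_one_twisted_down}--\ref{fig:200803_1_tao_and_3_eyelets_over_2_taos_and_2_eyelets} inductively from $P_{n-1}$ down to $P_1$ and verifying that at each step the image loop can be chosen to cover exactly the mini-graph directly below the previous one without introducing extraneous intersections with fibers of $\tau_{i-1}$ outside that mini-graph.
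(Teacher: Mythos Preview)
Your approach differs from the paper's, and the gap you flag as the ``principal obstacle'' is real and is not closed by the results you invoke. Corollary~\ref{cor:what-B-and-reds-cover} only says that $\pi_1(\partial B)$ \emph{covers} $T_1'$: it meets every fiber of $\tau_1$ that $T_1'$ meets. It says nothing about which \emph{additional} fibers $\pi_1(\partial B)$ might meet, so it does not give confinement to a neighborhood of $T_1'$. The local models in Figures~\ref{fig:200811_Type_one_twisted_down}--\ref{fig:200803_1_tao_and_3_eyelets_over_2_taos_and_2_eyelets} are likewise built to verify covering, not confinement; in each picture the twisted-down graph has strands exiting to the left and right, and Proposition~\ref{prop:mini-graph_covers_the_one_below} does not track where those strands end up. To finish your argument you would need a new inductive statement bounding the rightward extent of $\pi_i(\partial B)$ at each level, and the train track framework as developed does not supply one.

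The paper sidesteps train tracks entirely for this lemma. It tracks, level by level, which of the straight segments $\sigma_i(\beta_k)$ and $\sigma_i(\gamma_k)$ the loop $\pi_i(\partial B)$ intersects. At level $n-1$ the loop meets only $\sigma_{n-1}(\beta_1)$; each fully twisted row then pushes the rightmost such intersection exactly one segment to the right. The dimension constraint $n_1=2m-4$ is chosen precisely so that when the descent reaches the first row whose rightmost twist region vanishes, the loop has advanced only as far as $\beta_{m-1}$ and is still disjoint from $\sigma(\beta_m)$. From that level down the rightmost twist region is zero, so the isotopy fixes the two rightmost punctures pointwise and $\pi_t(\partial B)$ remains disjoint from $\sigma_t(\beta_m)$ for all remaining $t$. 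Since $\sigma_1(\beta_m)=D^m_-\cap P_1$ and $\partial B'$ is the frontier of $D^m_-$, disjointness from $\sigma_1(\beta_m)$ gives $\pi_1(\partial B)\cap\partial B'=\varnothing$ directly. This is exactly the ``rightward extent'' control your approach needs, obtained without any reference to $\tau_1$.
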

\begin{proof}
Observe that for all points along \(\partial B\), the \(y\)-coordinates are all less than \(4\).
(We will speak informally this way even though technically we mean that an isotopy class of \(\partial B\) in \(P_n\) has the property that all the \(y\)-coordinates are less than \(4\).)
Moving down a level,  $\pi_{n-1}(\partial B)$ is a loop in \(P_{n-1}\) whose \(y\)-coordinates are all less than \(5\).
Similarly, \(\pi_{n-2}(\partial B)\) is a loop in \(P_{n-2}\) whose \(y\)-coordinates are all less than \(6\), and so on.
In general, for the levels corresponding to \(D_a\), \(\pi_{n-k}(\partial B)\) is a loop in \(P_{n-k}\) whose \(y\)-coordinates are all less than \(4+k\).
Recall that \(n=n_a+n_b\), so \(n_b+1=n-n_a+1=n-(n_a-1)\).
Therefore \(\pi_{n_b+1}(\partial B)=\pi_{n-(n_a-1)}(\partial B)\) is a loop in \(P_{n-(n_a-1)}\) whose \(y\)-coordinates are all less than \(4+n_a-1\).
Since the dimensions of \(D_a\) were chosen so that \(n_a=2m-4\), it follows by substitution that the \(y\)-coordinates of \(\pi_{n_b+1}(\partial B)\) are all less than \(4+(2m-4)-1=2m-1\).
This means that \(\pi_{n_b+1}(\partial B)\) is completely to the left of the rightmost two punctures of \(P_{n_b+1}\).
Thus \(\pi_{n_b+1}(\partial B)\) is disjoint from \(\sigma_{n_b+1}(\beta_m)\) (the straight line segment connecting those two punctures).

Consider the \(\pi\)-projections of \(\partial B\) at consecutively lower levels.
For \(1\leq t\leq n_b\), $\pi_t(\partial B)$ will remain disjoint from $\sigma_t(\beta_m)$ because the isotopy $\pi_{t}$ from $P_{t+1}$ to $P_{t}$ fixes the two rightmost punctures of the bridge sphere pointwise.
Observe that \(\sigma_1(\beta_m)=\beta_m=D^m_-\cap P_1\).
Therefore $\pi_1(\partial B)$ is disjoint from \(\beta_m\), which implies that $\pi_1(\partial B) \cap \partial B' = \varnothing$, and so \(\{B,B\}\) are a weak reducing pair.
\end{proof}

\begin{obs}\label{obs:compress-a-cap}
In general, if we compress the cap \(R\) for a bridge arc \(\alpha\) along a boundary compressing disk \(\Delta\), the result will be two disjoint compressing disks whose boundary loops cut the bridge sphere into two punctured disks and a twice-punctured annulus, the latter of whose punctures correspond to \(\alpha\).
This means that neither of the resulting compressing disks are caps for \(\alpha\).
\end{obs}

\begin{lem}\label{lem:blue_intersects_reds}
The blue cap \(B\) for \(\alpha_+^1\) intersects all of the red disks below \(P_1\).
\end{lem}

\begin{proof}
First, by Proposition \ref{prop:corner-caps}, we already know that \(B\) intersects all of the red disks below \(P_1\) which are not caps for \(\alpha_-^m\), so it only remains to show that \(B\) also intersects all the red caps for \(\alpha_-^m\).
% Let \(R\) be such a cap.

Observe that $\partial B'$ cuts $P_1$ into a disk $F_1$ with two punctures and another disk $F_2$ with $2m-2$ punctures.
By Corollary \ref{cor:what-B-and-reds-cover}, $\pi_1(\partial B)$ covers \(T_1'\) (the mini-graph defined in Proposition \ref{prop:what-below-what} consisting of all of \(T_1\) except the rightmost two eyelets).
It follows from Lemma \ref{prop:disjointblue} that \(\pi_1(\partial B)\) is contained in \(F_2\).

Let \(R\) be a red cap for \(\alpha_-^m\), and assume by way of contradiction that \(\{B,R\}\) is a weak reducing pair.
Arrange for \(R\) to be in minimal position with respect to the bridge disk \(D_-^m\).
An outermost disk \(\Delta\) on \(D_-^m\) cut out by an outermost arc of \(D_-^m\cap R\) must be a boundary compressing disk for \(R\), or else \(D_-^m\) and \(R\) would not be in minimal position.
We perform a boundary compression of \(R\) along \(\Delta\), resulting in a disjoint union \(R_1\sqcup R_2\) of two nonparallel compressing disks for \(P_1\).
Since \(R\) was disjoint from \(\partial B\), and the boundary compression happened away from \(\partial B\), both \(R_1\) and \(R_2\) are also disjoint from \(B\).
Further, by Observation \ref{obs:compress-a-cap}, neither \(R_1\) nor \(R_2\) are caps for \(\alpha_-^m\), and so we have two weak reducing pairs, \(\{B,R_1\}\) and \(\{B,R_2\}\) which both contradict Proposition \ref{prop:corner-caps}.

\end{proof}

% \begin{figure}[ht!]
% \labellist
% \small\hair 2pt
% \pinlabel $\partial B'$ at 57 240
% \endlabellist
%       \centering
%     \includegraphics[width=.9\textwidth]{}
%     \caption{Performing boundary compressions results in a collection of disks $\mathcal{R}_1$ whose boundaries are contained in the interior of $F_1$ and another set of disks $\mathcal{R}_2$ with boundaries lying in the interior of $F_2.$}
%     \label{fig:notcovered2}
% \end{figure}

\begin{lem}
If \(R_a\) and \(R_b\) are red disks above and below the bridge sphere (respectively), then \(\{R_a,R_b\}\) is not a weak reducing pair.
\end{lem}

\begin{proof}
Assume to the contrary that \(\{R_a,R_b\}\) is a weak reducing pair of red disks.
By Proposition \ref{prop:corner-caps}, \(R_a\) and \(R_b\) are caps for \(\alpha_+^1\) and \(\alpha_-^m\), respectively.
By Corollary \ref{cor:what-B-and-reds-cover}, the loop $\pi_1(\partial R_a)$ covers either \(T_1\) or some mini-graph \(T_1''\) constructed from \(T_1\) by excluding the leftmost and/or the rightmost eyelet of \(T_1\).

Suppose \(\partial R_b\) is disjoint from $\pi_1(\partial R_a)$. 
Then by Lemma \ref{lem:ACandDisjoint}, $\partial R_b$ is isotopic to an almost carried loop, which contradicts Lemma \ref{lem:not_almost_carried}. 
\end{proof}

The following lemma is an immediate corollary of Theorem 5.10 from \cite{pongtanapaisan2020critical} since the upper braid \(D_a\) of our link is a \((n_a,m)\) plat link with \(n_a=2m-4\). 

\begin{lem}
The cap \(B'\) is disjoint from all red disks on the other side of the bridge sphere.
\end{lem}

% \begin{lem}
% If \(R\) is a red disk above \(P_1\), then \(R\) is not a weak reducing disk. 
% \end{lem}

% \begin{proof}
% By Proposition \ref{prop:corner-caps}, we may assume \(R\) is a red cap for \(\alpha_+^1\).
% By Corollary \ref{cor:what-B-and-reds-cover}, the loop $\pi_1(\partial R)$ covers either \(T_1\) or some mini-graph \(T_1''\) constructed from \(T_1\) by excluding the leftmost and/or the rightmost eyelet of \(T_1\).

% Suppose there is a simple closed curve $\ell$ representing a compressing disk below $P_1$ that is disjoint from $\pi_1(\partial R)$. 
% By Proposition \ref{prop:corner-caps}, \(\ell\) is the boundary of some cap for \(\alpha_-^m\).
% Then by Lemma \ref{lem:ACandDisjoint}, $\ell$ is isotopic to an almost carried loop representing a compressing disk below $P_1,$ which contradicts Lemma \ref{lem:not_almost_carried}. 
% Thus, each red disk above the bridge sphere must intersect all compressing disks below.
% \end{proof}

We have now shown that \(B\) and \(B'\) are the only weak reducing pair, and so we have proved our main theorem.

\vspace{1em}
\noindent
\textbf{Theorem 1.1} 
\textit{There exist infinitely many links with keen weakly reducible bridge spheres.
}

\vspace{1em}

Since a bridge sphere $\Sigma_L$ of a link $L$ induces a Heegaard surface $\widetilde{\Sigma_L}$ for the 2-fold cover of $S^3$ branched along $L$, it is natural to ask whether $\widetilde{\Sigma_L}$ satisfies properties that $\Sigma_L$ possesses. In our situation, the answer is no.

\begin{prop}
Keen weakly reducible bridge spheres in this paper do not lift to keen weakly reducible Heegaard surfaces.
\end{prop}
\begin{proof}
The blue compressing disk $B'$ is not only disjoint from $B$, but $\partial B'$ is also disjoint from a curve bounding a once-punctured disk $Q$ above $\Sigma_L$ (see Figure \ref{fig:cutdisk}). Such a once-punctured disk lifts to a compressing disk for one of the handlebodies of the Heegaard splitting of the double branched cover. Since $B'$ lifts to a compressing disk in the other handlebody whose boundary is disjoint from lifts of both $B$ and $Q,$ the Heegaard surface $\widetilde{\Sigma_L}$ is not keen.
\end{proof}

\begin{figure}[ht!]
\labellist
\small\hair 2pt

\pinlabel $\partial Q$ at 165 50
\endlabellist
      \centering
    \includegraphics[width=.5\textwidth]{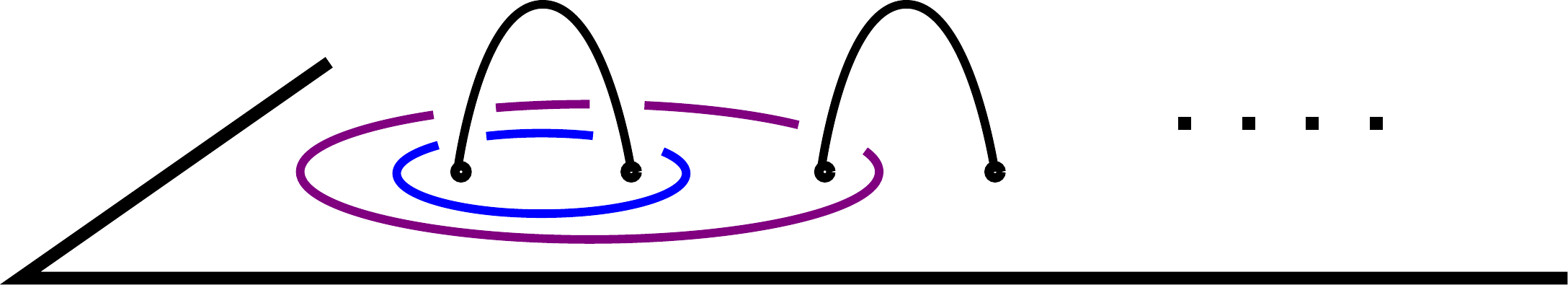}
    \caption{The purple curve bounds a once-punctured disk 
    \(Q\) above the bridge sphere and $\partial Q \cap \partial B' = \emptyset.$
    To visualize \(Q\), imagine a hemisphere shaped disk whose boundary is the purple loop and which is punctured by the second pictured bridge arc.}
    \label{fig:cutdisk}
\end{figure}
\section{Non-topologically minimal bridge spheres}
One of the main motivations of this article is to search for examples of bridge spheres that are not topologically minimal. The following criterion is needed for our construction of links with non-topologically minimal bridge spheres.
\subsection{Cho's Criterion}
For a link in bridge position, we have that $V_{\pm} \backslash N(\alpha_{\pm})$ is homeomorphic to a handlebody. Therefore, the complex spanned by compressing disks for the bridge sphere for $L$ in $V_{\pm} \backslash N(\alpha_{\pm})$ is a full subcomplex of the disk complex of the handlebody. We recall the following criterion by Cho \cite{cho2008homeomorphisms}:
\begin{thm}\label{thm:cho}
If $\mathcal{L}$ is a full subcomplex of the disk complex of the handlebody $\mathcal{K}(V_{\pm} \backslash N(\alpha_{\pm}))$ that satisfies the following condition, then $\mathcal{L}$ is contractible:
\begin{itemize}
    \item Let $D$ and $E$ be disks representing vertices of $\mathcal{L}$ and suppose that $D \cap E \neq \varnothing$. We assume that $D$ intersects $E$ minimally and transversely. If $\Delta \subset D$ is an outermost subdisk cut off by an outermost arc of $D \cap E$, then at least one of the disks obtained from surgery on $E$ along $\Delta$ is also a vertex of $\mathcal{L}.$
\end{itemize}
 \end{thm}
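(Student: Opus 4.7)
The plan is to prove contractibility of $\mathcal{L}$ by an induction on a suitable complexity, in the spirit of McCullough's proof that the full disk complex of a handlebody is contractible. Fix a base vertex $D_0 \in \mathcal{L}$ and, for each vertex $E \in \mathcal{L}$, define its complexity to be $c(E) = |D_0 \cap E|$, where the intersection is taken to be minimal and transverse. Let $\mathcal{L}_k$ denote the full subcomplex of $\mathcal{L}$ spanned by the vertices of complexity at most $k$; these give a filtration $\mathcal{L}_0 \subseteq \mathcal{L}_1 \subseteq \cdots$ exhausting $\mathcal{L}$.

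The base case is easy: $\mathcal{L}_0$ consists of $D_0$ together with all vertices disjoint from $D_0$. Since $\mathcal{L}$ is a full subcomplex, each such vertex spans an edge with $D_0$, so $D_0$ is a cone point for $\mathcal{L}_0$, which is therefore contractible. The strategy is then to show, inductively, that $\mathcal{L}_k$ deformation retracts onto $\mathcal{L}_{k-1}$, so that contractibility propagates upward and passes to the union.

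The inductive step is where the hypothesis enters. Given a vertex $E$ with $c(E) = k \geq 1$, choose an outermost arc of $E \cap D_0$ in $D_0$, cutting off an outermost subdisk $\Delta \subseteq D_0$. The surgery hypothesis guarantees that at least one of the two disks obtained by compressing $E$ along $\Delta$ — call it $E'$ — is again a vertex of $\mathcal{L}$. By construction, $E'$ is disjoint from $E$ (so $\{E,E'\}$ spans an edge of $\mathcal{L}$) and has strictly fewer intersections with $D_0$ than $E$ does. This gives a canonical ``surgery arrow'' $E \rightsquigarrow E'$ along an edge of $\mathcal{L}$, pushing each top-complexity vertex of $\mathcal{L}_k$ toward $\mathcal{L}_{k-1}$. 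One then uses these arrows to define a simplicial retraction of the star of each top-complexity vertex onto a subcomplex of $\mathcal{L}_{k-1}$.

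The main obstacle will be \emph{coherence}: when several vertices $E_1,\ldots,E_r$ of complexity $k$ together span a simplex of $\mathcal{L}_k$, one has to check that the individually chosen surgeries $E_i \rightsquigarrow E_i'$ assemble into a well-defined simplicial map. The issue is that a priori $E_i'$ and $E_j'$ need not be disjoint from each other, nor need they be disjoint from the other $E_\ell$'s. The standard fix is to perform the reduction one vertex at a time, ordering the top-complexity vertices (for example by a secondary intersection complexity with an auxiliary disk) and using that the single-vertex deformation through $E_i \rightsquigarrow E_i'$ is supported in the star of $E_i$ and therefore commutes with the other reductions once $E_i$ is eliminated. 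Setting up this bookkeeping — and in particular verifying that after each step the remaining simplex still lies in $\mathcal{L}$ — is the delicate part; once it is in place, standard simplicial collapse arguments complete the deformation retraction.
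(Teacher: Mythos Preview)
The paper does not prove this statement at all: Theorem~\ref{thm:cho} is quoted from Cho's paper \cite{cho2008homeomorphisms} and used as a black box (immediately afterward, in Proposition~\ref{prop:contractible}). So there is no ``paper's own proof'' to compare against.

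As for your outline itself: the overall strategy---filter by intersection number with a fixed base disk $D_0$ and use the surgery hypothesis to push high-complexity vertices downward---is indeed the standard one, and is essentially how Cho (following McCullough) argues. But you have correctly identified, and then not resolved, the actual content of the proof. The ``coherence'' problem is not a bookkeeping afterthought: if $E_1,\ldots,E_r$ are pairwise disjoint vertices of complexity $k$, the surgered disks $E_1',\ldots,E_r'$ need not be pairwise disjoint, so there is no simplicial map sending the simplex $\{E_1,\ldots,E_r\}$ anywhere in $\mathcal{L}_{k-1}$. Saying ``perform the reduction one vertex at a time'' does not fix this, because removing $E_1$ from $\mathcal{L}_k$ is only a homotopy equivalence if the link of $E_1$ in $\mathcal{L}_k\setminus\{E_1\}$ is contractible, and that link still contains the other top-complexity vertices $E_2,\ldots,E_r$. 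The way Cho's argument actually closes this gap is to show, by a nested induction, that for each top-complexity vertex $E$ the \emph{downward} link (the full subcomplex on vertices of strictly smaller complexity disjoint from $E$) is itself a complex of the same type and hence contractible by the inductive hypothesis; one then builds $\mathcal{L}$ by attaching cones over contractible subcomplexes. Your sketch gestures at this but does not set up the required double induction, so as written it is an outline rather than a proof.
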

 \begin{prop}\label{prop:contractible}
The disk complex of $(V_{\pm},\alpha_{\pm})$ is contractible
\end{prop}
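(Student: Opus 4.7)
The plan is to verify Cho's criterion (Theorem \ref{thm:cho}) for the full subcomplex $\mathcal{L}$ of compressing disks for $\Sigma_L$ in $V_\pm \backslash N(\alpha_\pm)$. Let $D, E \in \mathcal{L}$ be two compressing disks realizing $|D \cap E|$ minimally, with $D \cap E \neq \varnothing$. I would fix an outermost arc $\beta$ of $D \cap E$ in $D$, cutting off an outermost subdisk $\Delta \subset D$ with $\partial \Delta = \alpha \cup \beta$ and $\alpha \subset \partial D \subset \Sigma_L$. Two parallel copies of $\Delta$ surger $E$ into two disks $E_1, E_2$. Since $D$, $E$, and $\Delta$ are all disjoint from the bridge arcs $\alpha_\pm$, each $E_i$ is properly embedded in the handlebody, so the only substantive issue is whether their boundaries are essential in $\Sigma_L$.

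The heart of the proof is showing at least one $E_i$ has boundary essential in $\Sigma_L$, hence is a vertex of $\mathcal{L}$. Let $D_+, D_- \subset \Sigma$ be the two disks bounded by $\partial E$, with $n_+,n_- \geq 2$ punctures (since $E$ is a compressing disk, $\partial E$ must bound a disk with at least two punctures on each side). The arc $\alpha$ lies in one of these, say $D_+$, cutting it into subdisks with $n_+^a, n_+^b$ punctures satisfying $n_+^a + n_+^b = n_+$. A direct geometric analysis shows $\partial E_1$ bounds a disk with $n_+^a$ punctures on one side of $\Sigma$ (and $n_- + n_+^b$ punctures on the complementary side), while $\partial E_2$ bounds a disk with $n_+^b$ punctures (and $n_- + n_+^a$ on the other side). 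Since $n_- \geq 2$, each $E_i$ fails to lie in $\mathcal{L}$ only when the smaller puncture count drops to at most one; thus both fail simultaneously exactly in the edge case $n_+ = 2$ with $\alpha$ separating the two punctures of $D_+$, forcing $n_+^a = n_+^b = 1$.

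The main obstacle is resolving this edge case, as the first outermost arc may genuinely produce two inessential surgery results. My plan is to exploit the freedom in the choice of outermost arc, using either a different outermost arc of $D \cap E$ in $D$ or an outermost arc of $D \cap E$ in $E$. A combinatorial analysis, invoking the essentiality of both $\partial D$ and $\partial E$ in $\Sigma_L$ together with the minimality of $|D \cap E|$, should show that some alternative outermost arc yields a surgery producing at least one disk in $\mathcal{L}$. This verifies Cho's criterion and establishes contractibility of $\mathcal{L}$.
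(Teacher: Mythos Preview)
Your setup is correct and you have correctly isolated the only potentially bad configuration, \((n_+^a,n_+^b)=(1,1)\). But this configuration cannot occur, so the ``main obstacle'' you describe is illusory. The surgered disk \(E_i\) is properly embedded in the ball \(V_\pm\) and disjoint from the tangle \(\alpha_\pm\); hence \(E_i\) separates \(V_\pm\) into two sub-balls, and each bridge arc, being disjoint from \(E_i\), lies entirely in one of them. Consequently both endpoints of every bridge arc lie on the same side of \(\partial E_i\) in \(\Sigma\), so each side of \(\partial E_i\) contains an \emph{even} number of punctures. In particular \(n_+^a\neq 1\), the edge case never arises, and Cho's criterion holds for every outermost \(\Delta\). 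The paper's proof compresses all of this to a single sentence, attributing the impossibility of the bad case to minimality of \(|D\cap E|\); the parity observation above is perhaps the most transparent way to see it.

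Your proposed fallback---switching to a different outermost arc---would in any event not verify Theorem~\ref{thm:cho} as stated there: the hypothesis is universal in the choice of outermost subdisk \(\Delta\), so one must establish the conclusion for \emph{every} such \(\Delta\), not merely exhibit one that happens to work. Fortunately, as above, every \(\Delta\) already works.
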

\begin{proof}
Suppose that compressing disks $D$ and $E$ in $(V_{\pm},\alpha_{\pm})$ intersect transversely and minimally. Then the boundary of one of the disks that arises from surgery on $E$ along $\Delta$ defined as in Theorem \ref{thm:cho} must enclose at least two punctures. Otherwise, $D \cap E$ would not be minimal.
\end{proof}
Using Cho's criterion and Theorem \ref{theorem:main}, we obtain the following corollary.

\begin{cor}
 There is an infinite family of nontrivial links with bridge spheres that are not topologically minimal.
\end{cor}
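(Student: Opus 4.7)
The plan is to combine Theorem \ref{theorem:main}, Proposition \ref{prop:contractible}, and the definition of keen weak reducibility to show that for each of the infinitely many links produced by Theorem \ref{theorem:main}, the associated disk complex $\mathcal{D}(\Sigma_L)$ is contractible, which forces the bridge sphere to be non-topologically minimal.

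First, I would invoke Theorem \ref{theorem:main} to obtain an infinite family $\{L_k\}$ of links, each equipped with a keen weakly reducible bridge sphere $\Sigma_{L_k}$. For a fixed such link, let $\mathcal{D}_+$ and $\mathcal{D}_-$ denote the full subcomplexes of $\mathcal{D}(\Sigma_L)$ spanned by the compressing disks on the $V_+$- and $V_-$-sides, respectively. By Proposition \ref{prop:contractible}, both $\mathcal{D}_+$ and $\mathcal{D}_-$ are contractible.

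Next, I would analyze the edges of $\mathcal{D}(\Sigma_L)$ that join $\mathcal{D}_+$ to $\mathcal{D}_-$. By definition, these edges are precisely the weak reducing pairs, and by the keen hypothesis there is exactly one such edge, say $e$ with endpoints $D\in\mathcal{D}_+$ and $E\in\mathcal{D}_-$. There are no higher-dimensional simplices spanning both sides, since any $2$-simplex would require two vertices on one side and one on the other, yielding at least two distinct weak reducing edges, contradicting keenness. Therefore $\mathcal{D}(\Sigma_L)$ decomposes as $\mathcal{D}_+\cup e\cup\mathcal{D}_-$, which is the mapping-cylinder-style union of two contractible complexes glued along the two endpoints of an edge. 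Such a space deformation retracts onto each of $\mathcal{D}_\pm$ and is therefore contractible; in particular every homotopy group $\pi_i(\mathcal{D}(\Sigma_L))$ is trivial.

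Finally, I would connect this back to topological minimality. Since $\Sigma_L$ is weakly reducible, $\mathcal{D}(\Sigma_L)$ is nonempty, so the first clause of the definition of topological minimality fails. Since $\mathcal{D}(\Sigma_L)$ is contractible, every homotopy group is trivial and the second clause fails as well; thus $\Sigma_L$ is not topologically minimal. Applying this to each $\Sigma_{L_k}$ yields the desired infinite family, completing the corollary.

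I do not expect a substantive obstacle: the argument is essentially the bridge-sphere analogue of the Heegaard-surface reasoning sketched in the introduction, where McCullough's contractibility result is replaced by Proposition \ref{prop:contractible}. The only mild subtlety is confirming that no higher simplex of $\mathcal{D}(\Sigma_L)$ crosses between $\mathcal{D}_+$ and $\mathcal{D}_-$, but this follows immediately from the uniqueness of the weak reducing pair.
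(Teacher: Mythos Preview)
Your proposal is correct and follows essentially the same approach as the paper's proof: both use Proposition \ref{prop:contractible} to see that the two sides' disk complexes are contractible, then use the uniqueness of the weak reducing pair to conclude that $\mathcal{D}(\Sigma_L)$ is obtained by joining these contractible subcomplexes with a single edge, hence is contractible. Your version is more explicit---in particular, you spell out why no higher-dimensional simplex can cross between the two sides---whereas the paper's proof leaves this implicit.
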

\begin{proof}
Since the bridge sphere $P_1$ for each link $L \in\mathcal{L}$ contains a unique pair of disjoint compressing disks on opposite sides of $P_1$, there is exactly one edge connecting the contractible disk complex of $(V_+,\alpha_+)$ to the contractible disk complex of $(V_-,\alpha_-)$ showing that the disk complex of $P_1$ is contractible.
\end{proof}

\bibliographystyle{plain}
\bibliography{ref}
\end{document}